\numberwithin{equation}{section}
\providecommand{\abs}[1]{\left\vert#1\right\vert}
\providecommand{\norm}[1]{\left\Vert#1\right\Vert}
\providecommand{\pnorm}[2]{\left\Vert#1\right\Vert_{L^{#2}}}
\providecommand{\Rn}[1]{\mathbb{R}^{#1}}
\providecommand{\br}[1]{\langle #1 \rangle}
\providecommand{\ns}[1]{\norm{#1}^2}
\providecommand{\pns}[2]{\norm{#1}^2_{L^{#2}}}
\def\nab{\nabla}
\def\dt{\partial_t}
\def\hal{\frac{1}{2}}
\def\ls{\lesssim}
\def\p{\partial}
\def\sg{\mathbb{D}}
\def\da{\Delta_{\mathcal{A}}}
\def\naba{\nab_{\mathcal{A}}}
\def\diva{\diverge_{\mathcal{A}}}
\def\H1{{_0}H^1(\Omega)}
\def\sdb{\bar{\mathcal{D}}}
\def\seb{\bar{\mathcal{E}}}
\providecommand{\abs}[1]{\left\vert#1\right\vert}
\providecommand{\norm}[1]{\left\Vert#1\right\Vert}
\providecommand{\pnorm}[2]{\left\Vert#1\right\Vert_{L^{#2}}}
\providecommand{\ns}[1]{\norm{#1}^2}
\providecommand{\pns}[2]{\norm{#1}^2_{L^{#2}}}
\providecommand{\Rn}[1]{\mathbb{R}^{#1}}
\providecommand{\br}[1]{\langle #1 \rangle}
\providecommand{\sdb}[1]{\bar{\mathcal{D}}_{#1}}
\providecommand{\seb}[1]{\bar{\mathcal{E}}_{#1}}
\def\hal{\frac{1}{2}}
\def\ls{\lesssim}
\def\nab{\nabla}
\def\dt{\partial_t}
\def\p{\partial}
\def\da{\Delta_{\mathcal{A}}}
\def\naba{\nab_{\mathcal{A}}}
\def\diva{\diverge_{\mathcal{A}}}
\def\sg{\mathbb{D}}
\def\SH0{\mathcal{H}^0(\Omega)}
\def\A{\mathcal{A}}
\def\D{\mathcal{D}}
\def\E{\mathcal{E}}
\def\F{\mathcal{F}}
\def\H{\mathcal{H}}
\def\N{\mathcal{N}}
\def\dd{\mathrm{d}}
\def\XXint#1#2#3{{\setbox0=\hbox{$#1{#2#3}{\int}$ }
\vcenter{\hbox{$#2#3$ }}\kern-.6\wd0}}
\DeclareMathOperator{\diverge}{div}
\newtheorem{lem}{Lemma}[section]
\newtheorem{prop}[lem]{Proposition}
\newtheorem{thm}[lem]{Theorem}
\newtheorem{remark}[lem]{Remark}
\newtheorem{thm_intro}{Theorem}
\title[Surfactant-driven flows]{Dynamics and stability of surfactant-driven surface waves}
\author{Chanwoo Kim}
\address{
Department of Mathematics\\
University of Wisconsin, Madison\\
Madison, WI 53706 USA
}
\email[C. Kim]{chanwoo.kim@wisc.edu}
\thanks{C. Kim was supported in part by NSF grant DMS 1501031.}
\author{Ian Tice}
\address{
Department of Mathematical Sciences\\
Carnegie Mellon University\\
Pittsburgh, PA 15213, USA
}
\email[I. Tice]{iantice@andrew.cmu.edu}
\begin{document}

\begin{abstract}
In this paper we consider a layer of incompressible viscous fluid lying above a flat periodic surface in a uniform gravitational field.  The upper boundary of the fluid is free and evolves in time.  We assume that a mass of surfactants resides on the free surface and evolves in time with the fluid.  The surfactants dynamics couple to the fluid dynamics by adjusting the surface tension coefficient on the interface and also through tangential Marangoni stresses caused by gradients in surfactant concentration.  We prove that small perturbations of equilibria give rise to global-in-time solutions in an appropriate functional space, and we prove that the solutions return to equilibrium exponentially fast.  In particular this proves the asymptotic stability of equilibria.
\end{abstract}

\maketitle

\section{Introduction }

\subsection{Surfactant-driven flows   }

Surfactants are chemical agents that, when added to a fluid, collect at free interfaces thereby reducing the surface tension.  Variations in the surfactant concentration on the surface also give rise to tangential surface forces called Marangoni forces, which can have a dramatic effect on the surface.  The surfactant dynamics are driven by several effects: absorption and desorption from the free surface, fluid transport along the surface and in the bulk, and both bulk and surface diffusion.  We refer to the books  \cite{ed_bren_was,levich} and the review \cite{Sar} for a more thorough discussion of surfactant physics.  In manufacturing and industrial applications surfactants are a fundamental tool for  stabilizing bubble formation in processes such as foaming, emulsifying, and coating (see the books \cite{myers,rosen} for an exhaustive list of surfactant applications).  Surfactants also play a critical role in preventing the collapse of the lungs during breathing (see  \cite{hills} and the references therein) and are currently being developed as tools to aid in drug delivery in the lungs (see for example \cite{garoff_2, garoff}). 

In this paper we consider  a simple model of surfactants in which a single layer of fluid occupies the three-dimensional domain $\Omega(t)$ with free boundary surface $\Gamma(t)$.  We neglect the effects of absorption and desorption and assume that all of the surfactant is concentrated on $\Gamma(t)$ with density $\hat{c}(\cdot,t):\Gamma(t) \to [0,\infty)$.  The coefficient of surface tension depends on the surfactant concentration $\hat{c}$ via a relation $\sigma = \sigma(\hat{c})$, where we assume that the surface tension function satisfies:
\begin{equation}\label{sigma_assume}
\begin{cases}
\sigma \in C^3([0,\infty)) \\
\sigma \text{ is strictly decreasing.}
\end{cases}
\end{equation}
The latter assumption comes from the fact that surfactants decrease the surface tension in higher concentration, and the former assumption is merely a technical assumption needed for our PDE analysis.  

We will assume that the fluid is viscous and incompressible and that a uniform gravitational field $-g e_3 = (0,0,-g) \in \mathbb{R}^3$ is applied to the fluid.  The fluid and surfactant dynamics then couple through the system of equations (see \cite{ed_bren_was,levich} for derivations and precise definitions of the operators: we will soon reformulate these equations so do not fully define the operators here)   
\begin{equation}\label{surf}
 \begin{cases}
\partial_t u + u \cdot \nabla u + \nab p  = \mu \Delta u - g  e_3  \text{   and } \diverge{u}=0  & \text{in }\Omega(t) \\ 
p\nu -\mu (\nab u + \nab u^T) \nu  =  - \sigma(\hat{c}) \mathcal{H}_{\Gamma(t)} \nu - \nab_{\Gamma(t)} (\sigma(\hat{c})), & \text{on } \Gamma(t) \\ 
 D_t \hat{c} + \hat{c} \diverge_{\Gamma(t)}{u} = \gamma \Delta_{\Gamma(t)} \hat{c} &\text{on }\Gamma(t),
\end{cases}
\end{equation}
where here $\nab_{\Gamma(t)}$ denotes the surface gradient on $\Gamma(t)$,  $D_t$ is a temporal derivative along the flowing surface, $\diverge_{\Gamma(t)}$ is the surface divergence, $\Delta_{\Gamma(t)}$ is the surface Laplacian,  $\gamma > 0$ is the surfactant diffusion constant, and $\mathcal{H}_{\Gamma(t)}$ is twice the mean-curvature operator on $\Gamma(t)$.  The first two equations in \eqref{surf} are the usual incompressible Navier-Stokes equations.  The third equation is the balance of stress on the free surface, and the right-hand side shows that two stresses are generated by the surfactants.  The first term is a normal stress caused by surface curvature, and the second is a tangential stress, known as the Marangoni stress, caused by gradients in the surfactant concentration on the surface.   The fourth equation in \eqref{surf} shows that the surfactant concentration changes to due to flow on the surface and diffusion.

Although surfactant-driven fluid dynamics have been studied numerically by Kwan-Park-Shen \cite{kwan_park_shen} and Xu-Li-Lowengrub-Zhao \cite{xu_li_low_zhao}, there are few rigorous results available in the literature.  The local well-posedness of a two-phase bubble model without gravity and with absorption was proved by Bothe-Pr\"{u}ss-Simonett \cite{bo_pr_si_1}.  The linear stability of the same model was studied by Bothe-Pr\"{u}ss \cite{bo_pr_1}.  The purpose of this paper is to provide rigorous analysis of the model \eqref{surf} for gravity-driven one-phase stratified flows without absorption, which is an important component in the understanding of surfactant-driven flows.

\subsection{Formulation of equations   }

We now specify the equations of motion more precisely.  We will assume that the fluid is horizontally periodic and lies above a flat rigid interface, i.e. that the moving domain $\Omega(t)$ is of the form
\begin{equation*}
\Omega(t) \ :  = \  \{ y \in \Sigma \times \mathbb{R} \;\vert\; -b  < y_3 < \eta(y_1,y_2,t)\},
\end{equation*}
were we assume that $\Sigma : = (L_{1} \mathbb{T}) \times (L_{2} \mathbb{T})$ for $\mathbb{T} = \mathbb{R} / \mathbb{Z}$ and $L_{1},L_{2} >0$ periodicity lengths. The depth of the lower boundary $b>0$ is assumed to be fixed constant, but the upper boundary is a free surface that is the graph of the unknown function $\eta: \Sigma\times \mathbb{R}_{+} \rightarrow \mathbb{R}$.  We will write $\Gamma(t) = \{y_3 = \eta(y_1,y_2,t)\}$ for the free surface of the fluid and $\Sigma_b = \{y_3 =-b\}$ for the fixed bottom surface of the fluid.

Since $\Gamma(t)$ is specified as the graph of $\eta(\cdot,t)$ it is more convenient to redefine the surfactant concentration as a function defined on the cross-section $\Sigma$ rather than on $\Gamma(t)$.  To this end we define the surfactant density function  $\tilde{c} : \Sigma \times \mathbb{R}_+ \to [0,\infty)$ via  $\tilde{c}(x_\ast,t): =  c( x_\ast, \eta (x_\ast,t),t)$.   In what follows we will employ the ``horizontal'' differential operators $\nab_\ast,$ $\diverge_\ast$, and $\Delta_\ast$ (along with writing $x_\ast = (x_1,x_2)$ for $x \in \Rn{3}$), as well as the  versions of the surface differential operators $\nab_\Gamma$, $\diverge_\Gamma$, and $\Delta_\Gamma$ as described in Appendix \ref{app_surf}.  

For each $t$, the fluid is described by its velocity and pressure functions $(u(\cdot,t),p(\cdot,t)) :\Omega(t) \to \mathbb{R}^{3} \times \mathbb{R}$.  Then $(u, p, \eta, \tilde{c})$ satisfy the following system of equations $\Omega(t)$ for $t>0$:
\begin{equation}\label{ns_euler}
 \begin{cases}
\partial_t u + u \cdot \nabla u + \nabla p = \mu \Delta u & \text{in }
\Omega(t) \\
\text{div} \ {u}=0 & \text{in }\Omega(t) \\
\partial_t \eta = u_3 - u_1 \partial_{y_1}\eta - u_2 \partial_{y_2}\eta &
\text{on } \Gamma(t) \\
(p I - \mu \mathbb{D}(u) ) \nu = g \eta \nu -(\sigma H(\eta)   \nu  + \nabla_{\Gamma}\sigma(\tilde{c})) & \text{on } \Gamma(t) \\
\partial_t \tilde{c} +  {u}\cdot \nabla_{*}  \tilde{c} +  \tilde{c} \ \text{div}_{\Gamma} {u} = \gamma \Delta_{\Gamma}  \tilde{c}  & \text{on } \Gamma(t) \\
u = 0 & \text{on } \Sigma_b \\
u(\cdot,0) =u_0, \eta(\cdot,0) = \eta_0, \tilde{c}(\cdot,0) = \tilde{c}_0.
\end{cases}
\end{equation}
Here, we write
\begin{equation*}
\nu= \frac{(- \partial_{y_{1}} \eta, - \partial_{y_{2} } \eta, 1  )}{\sqrt{1+ |\nabla_{*} \eta |^{2}}} 
\end{equation*}
for the outward unit normal on $\Gamma(t)$, $I$ for the $3 \times 3$ identity matrix,  $(\mathbb{D} u)_{ij} = \partial_i u_j + \partial_j u_i$ for the symmetric gradient of $u$, $g>0$ for the strength of gravity, and for $\mu>0$ the viscosity.  Notice also that we have shifted the gravitational forcing to the boundary by redefining the pressure $p \mapsto p + g x_3$.  The tensor $(p I - \mu \mathbb{D}(u))$ is known as the viscous stress tensor.    The mean curvature is denoted by $H(\eta)$ and is given by 
\begin{equation}\label{mean_curvature}
H(\eta)=  \mathrm{div}_* \left(\ \frac{\nabla_* \eta}{\sqrt{1+|\nabla_* \eta|^2}}\right).
\end{equation}

For the sake of convenience we will reduce the number of physical parameters we must keep track of in \eqref{ns_euler} by rescaling in space and time.  By doing so we may assume that $\mu = g =1$ at the cost of relabeling $L_1, L_2, b >0$, and $\gamma>0$.   Throughout the rest of the paper we assume that this scaling has been done in \eqref{ns_euler}.

We assume that the initial surface function $\eta_0$ satisfies the ``zero average'' condition 
\begin{equation}\label{z_avg}
\frac{1}{L_1 L_2} \int_\Sigma \eta_0 =0.
\end{equation}
If it happens that $\eta_0$ does not satisfy \eqref{z_avg} but does satisfy the extra condition that $-b + \br{\eta_0}>0$, where we have written $\br{\eta_0}$ for the left side of \eqref{z_avg}, then it is possible to shift the problem to obtain a solution to \eqref{ns_euler} with $\eta_0$ satisfying \eqref{z_avg}.  Indeed,  we may change 
\begin{equation*}
 y_3 \mapsto y_3 - \br{\eta_0}, \eta \mapsto \eta - \br{\eta_0}, b \mapsto b + \br{\eta_0}, \text{ and } 
p \mapsto p - \br{\eta_0}
\end{equation*}
to find a new solution with the initial surface function satisfying \eqref{z_avg}.  The data $u_0$, $\tilde{c}_0$, and $\eta_0 - \br{\eta_0}$ will satisfy the same compatibility conditions as $u_0,\tilde{c}_0,\eta_0$, and  $b + \br{\eta_0}  >0$, so after renaming we arrive at the above problem with $\eta_0$ satisfying \eqref{z_avg}.  Note that for sufficiently regular solutions to the periodic problem, the condition \eqref{z_avg} persists in time since $\dt \eta = u \cdot \nu \sqrt{1 + \abs{\nab_\ast \eta}^2 }$:
\begin{equation*}
 \frac{d}{dt}  \int_{\Sigma} \eta =  \int_{\Sigma} \dt \eta  = \int_{\Gamma(t) } u \cdot \nu = \int_{\Omega(t)} \diverge{u} = 0.
\end{equation*}

\subsection{Energy-dissipation structure, equilibria,  and conservation of surfactant mass}

Upon taking the dot product of the first equation in \eqref{ns_euler} with $u$, integrating by parts over $\Omega$, and employing all but the fifth equation in \eqref{ns_euler} we may deduce (see Proposition \ref{ed_geometric} for details of a similar calculation) the energy-dissipation equation
\begin{equation*}
 \frac{d}{dt} \left(  \int_{\Omega(t)} \hal \abs{u}^2 +  \int_{\Sigma} \frac{1}{2} \abs{\eta}^2 \right) + \int_{\Omega(t)} \hal \abs{\sg u}^2 = \int_{\Gamma(t)}  \sigma(\tilde{c}) H u \cdot \nu + \nab_\Gamma \sigma(\tilde{c}) \cdot u  
\end{equation*}
Now we use the fact that $\Gamma(t) = \{x_3 = \eta(x_\ast,t)\}$ in conjunction with Proposition \ref{surf_ibp} to write  
\begin{equation*}
\begin{split}
\int_{\Gamma(t)}  \sigma(\tilde{c}) H u \cdot \nu + \nab_\Gamma \sigma(\tilde{c}) \cdot u  & = 
\int_{\Sigma} \left( \sigma(\tilde{c}) H u \cdot \nu + \nab_\Gamma \sigma(\tilde{c}) \cdot u \right) \sqrt{1+\abs{\nab_\ast \eta}^2} 
\\
&= \int_\Sigma  - \sigma(\tilde{c}) \diverge_\Gamma ( u )   \sqrt{1+\abs{\nab_\ast \eta}^2}.
\end{split}
\end{equation*}
From this we conclude that 
\begin{equation}\label{intro_ed_u}
  \frac{d}{dt} \left(  \int_{\Omega(t)} \hal \abs{u}^2 +  \int_{\Sigma} \frac{1}{2} \abs{\eta}^2 \right) + \int_{\Omega(t)} \hal \abs{\sg u}^2 =\int_\Sigma  - \sigma(\tilde{c}) \diverge_\Gamma ( u)    \sqrt{1+\abs{\nab_\ast \eta}^2}.
\end{equation}
The term on the right does not admit a good sign and in fact shows that energy is exchanged between the fluid and the surfactant.  

Given the parabolic form of the $\tilde{c}$ equation in \eqref{ns_euler}, we might expect to find a good energy-dissipation relation for $\tilde{c}$ by multiplying the equation by $\tilde{c} \sqrt{1+\abs{\nab_\ast \eta}^2}$ and integrating by parts.   Proposition \ref{surf_c_ev} applied with $f(z) = z^2/2$ shows that this yields the equation
\begin{equation}\label{intro_ed_c}
\frac{d}{dt} \int_\Sigma \hal \abs{\tilde{c}}^2 \sqrt{1+\abs{\nab_\ast \eta}^2}  + \int_\Sigma \gamma \abs{\nab_\Gamma \tilde{c}}^2\sqrt{1+\abs{\nab_\ast \eta}^2}   = \int_\Sigma -\hal \abs{\tilde{c}}^2 \diverge_\Gamma u     \sqrt{1+\abs{\nab_\ast \eta}^2}
\end{equation}
Once again we see that the term on the right does not admit a good sign and shows that energy is transferred from the fluid to the surfactant.  We might hope initially that the same interaction term with different signs would appear in both \eqref{intro_ed_u} and \eqref{intro_ed_c}, so that upon summing we would get a clean energy-dissipation relation for the total system including the fluid and the surfactant.  Evidently, though, this is not the case, so it is not immediately clear that the problem \eqref{ns_euler} admits a useful energy-dissipation structure.

To get around this problem we will look at the evolution of a more complicated quantity than $\abs{\tilde{c}}^2$.  For any $r \in (0,\infty)$ we define the function $\xi_r : [0,\infty) \to \mathbb{R}$ via 
\begin{equation}\label{xi_def}
 \xi_r(x) = x \left( \frac{\sigma(r)}{r} - \int_{r}^x \frac{\sigma(z)}{z^2} dz  \right).
\end{equation}
The inclusion $\sigma \in C^3$ from \eqref{sigma_assume} tells us that $\xi_r \in C^4$.  Integration by parts and differentiation reveals that 
\begin{equation*}
 \xi_r(x) = \sigma(x) - x \int_{r}^x \frac{\sigma'(z)}{z} dz \text{ and } \xi_r'(x) = -\int_{r}^x \frac{\sigma'(z)}{z} dz.
\end{equation*}
From these and the fact that $\sigma \ge 0$ is decreasing we deduce that $\xi_r$ obeys the following properties:
\begin{equation}\label{xi_properties}
\begin{cases}
\xi_r(x) - x \xi_r'(x) =\sigma(x) \text{ for }x \in [0,\infty) \\
\xi_r''(x) = -\sigma'(x)/x \ge 0 \text{ for }x \in [0,\infty) \\
\xi_r \text{ is strictly convex on } [0,\infty) \\
\xi_r \text{ is strictly decreasing on } [0,r) \\
\xi_r \text{ is strictly increasing on } (r,\infty) \\
\xi_r(x) \ge \sigma(r) \ge 0 \text{ for }x \in [0,\infty) \\
\xi_r(x) =\sigma(r) \Leftrightarrow x=r.
\end{cases}
\end{equation}

Now that we have introduced $\xi_r$ in \eqref{xi_def} we employ Proposition \ref{surf_c_ev} and the first equation in \eqref{xi_properties} to see that 
\begin{multline}\label{intro_ed_xi}
\frac{d}{dt} \int_\Sigma \xi_r(\tilde{c}) \sqrt{1+\abs{\nab_\ast \eta}^2} + \int_\Sigma \gamma \xi_r''(\tilde{c}) \abs{\nab_\Gamma \tilde{c}}^2  \sqrt{1+\abs{\nab_\ast \eta}^2} = \int_\Sigma \left( \xi_r(\tilde{c}) - \xi_r'(\tilde{c}) \tilde{c} \right) \diverge_\Gamma u     \sqrt{1+\abs{\nab_\ast \eta}^2} \\
= \int_\Sigma \sigma(\tilde{c}) \diverge_\Gamma u     \sqrt{1+\abs{\nab_\ast \eta}^2}. 
\end{multline}
Thus, upon summing \eqref{intro_ed_u} and \eqref{intro_ed_xi} we find that
\begin{equation}\label{intro_ed_combo}
\frac{d}{dt} \left(  \int_{\Omega(t)} \hal \abs{u}^2 +  \int_{\Sigma} \frac{1}{2} \abs{\eta}^2 + \xi_r(\tilde{c}) \sqrt{1+\abs{\nab_\ast \eta}^2}    \right) + \int_{\Omega(t)} \hal \abs{\sg u}^2 + \int_\Sigma \gamma \xi_r''(\tilde{c}) \abs{\nab_\Gamma \tilde{c}}^2  \sqrt{1+\abs{\nab_\ast \eta}^2} =0.
\end{equation}
Since $\xi_r, \xi_r'' \ge 0$, this reveals that the problem \eqref{ns_euler} does in fact admit a nice energy-dissipation equation in which all of the energy functionals are non-negative.

We can employ \eqref{intro_ed_combo} to find the equilibria of the problem \eqref{ns_euler}.  Indeed, if we assume a time-independent ansatz in \eqref{ns_euler} then \eqref{intro_ed_combo} tells us that $\sg u =0$ and $\xi_r''(\tilde{c}) \abs{\nab_\Gamma \tilde{c}}^2 =0$.  Since $u=0$ on $\Sigma_b$ we deduce that $u=0$ in $\Omega$, and since $\sigma' <0$ (due to \eqref{sigma_assume}) and $\xi''(x) = -\sigma'(x) /x$ we deduce that $\tilde{c}=c_0 \in (0,\infty)$, where we avoid non-positive solutions for obvious physical reasons.  Using these and \eqref{z_avg} in conjunction with the first and fourth equations in \eqref{ns_euler} then shows that $p=0$ and $\eta =0$.  We thus deduce that \eqref{ns_euler} admits a one-parameter family of equilibrium solutions $u=0$, $p=0$, $\eta=0$, and $\tilde{c} = c_0 \in (0,\infty)$.  The parameter $c_0$ may be chosen, for instance, by assuming that there is a fixed surfactant mass $M_{surf} >0$:
\begin{equation*}
 M_{surf} = \int_{\Sigma} \tilde{c} \sqrt{1 + \abs{\nab \eta}^2} = \int_\Sigma c_0 = \abs{\Sigma } c_0 = L_1 L_2 c_0.
\end{equation*}
In this way we may view the equilibrium solution as being uniquely determined by the mass of surfactant present on the flat equilibrium surface.

The identity \eqref{intro_ed_combo} is valid for any choice of $r \in (0,\infty)$, but when we wish to consider solutions near the equilibrium configuration $u=0$, $p=0$, $\eta=0$, $\tilde{c} = c_0 \in (0,\infty)$ we should choose $r = c_0$.  In this case  \eqref{intro_ed_combo} implies that the energy (the term in the time derivative) does not increase in time, which already suggests that the problem \eqref{ns_euler} should admit stable equilibria.  However, it is not at all obvious from the form of the dissipation functional (the terms outside the time derivative in \eqref{intro_ed_combo}) that it is coercive over the energy functional, and thus it is not clear from the energy-dissipation relation \eqref{intro_ed_combo} that the equilibrium is asymptotically stable.

Proposition \ref{surf_c_ev} also allows us to deduce a basic conservation law for $\tilde{c}$.  Indeed, we use $f(z) = z$ there to find that 
\begin{equation*}
 \frac{d}{dt} \int_\Sigma \tilde{c} \sqrt{1+\abs{\nab_\ast \eta}^2} = 0.
\end{equation*}
The physical interpretation of this is that the overall surfactant mass present on the moving interface does not change in time.  We will always assume that the initial data $\tilde{c}_0 = \tilde{c}(\cdot,0)$ and $\eta_0 = \eta(\cdot,0)$ are related to the equilibrium surfactant concentration $c_0$ via 
\begin{equation}\label{conserv_c}
c_0 := \frac{1}{\abs{\Sigma}} \int_{\Sigma} \tilde{c}_{0}  \sqrt{1+\abs{\nab_\ast \eta_0}^2}.  
\end{equation}
In other words, the initial data $\eta_0, \tilde{c}_0$ uniquely determine which equilibrium solution is a possible candidate for the limit as $t \to \infty$ to solutions to \eqref{ns_euler}.

\subsection{Reformulation  }

In order to work in a fixed domain, we employ a frequently used transformation: see \cite{B,GT1,GT2,WTK}. We consider the fixed equilibrium domain
\begin{equation}\notag
\Omega:= \{x \in \Sigma \times \mathbb{R} \; \vert\;  -b < x_3 < 0  \},
\end{equation}
for which we will write the coordinates as $x\in \Omega$.  We will think of $\Sigma$ as the upper boundary of $\Omega$, and we will write $\Sigma_b := \{x_3 = -b\}$ for the  lower boundary.  We continue to view $\eta$ as a function on $\Sigma \times \mathbb{R}^+$.  We then define 
\begin{equation*}
 \bar{\eta}:= \mathcal{P} \eta = \text{harmonic extension of }\eta \text{ into the lower half space},
\end{equation*}
where $\mathcal{P}$ is as defined by \eqref{poisson_def_per}.  The harmonic extension $\bar{\eta}$ allows us to flatten the coordinate domain via the mapping
\begin{equation}\label{mapping_def}
 \Omega \ni x \mapsto   (x_1,x_2, x_3 +  \bar{\eta}(x,t)(1+ x_3/b(x_1,x_2) )) = \Theta(x,t) = (y_1,y_2,y_3) \in \Omega(t).
\end{equation}
Note that $\Theta(\Sigma,t) = \{ y_3 = \eta(y_1,y_2,t) \} = \Gamma(t)$ and $\Theta(\cdot,t)\vert_{\Sigma_b} = Id_{\Sigma_b}$, i.e. $\Theta$ maps $\Sigma$ to the free surface and keeps the lower surface fixed.   We have
\begin{equation*}
 \nabla  \Theta=
\begin{pmatrix}
 1 & 0 & 0 \\
 0 & 1 & 0 \\
 A & B & J
\end{pmatrix}
\text{ and }
 \mathcal{A} := (\nabla  \Theta^{-1})^T =
\begin{pmatrix}
 1 & 0 & -A K \\
 0 & 1 & -B K \\
 0 & 0 & K
\end{pmatrix},
\end{equation*}
for
\begin{equation*}\label{ABJ_def}
\begin{split}
A &= \partial_1 \bar{\eta} \tilde{b} -( x_3 \bar{\eta} \partial_1 b )/b^2,\;\;\;  B = \partial_2 \bar{\eta} \tilde{b} -( x_3 \bar{\eta} \partial_2 b )/b^2,  \\
J &=  1+ \bar{\eta}/b + \partial_3 \bar{\eta} \tilde{b},  \;\;\; K = J^{-1}, \\
\tilde{b}  &= (1+x_3/b).
\end{split}
\end{equation*}
Here $J = \mathrm{det}{\nabla \Theta}$ is the Jacobian of the coordinate transformation.

Now we define the transformed quantities as (abusing notation slightly)
\begin{equation}\notag
u(t,x) := u(t,\Theta(t,x)), \ p(t,x) := p(t,\Theta(t,x)), \ 
\tilde{c}(t,x_{*}) = \tilde{c}(t,\Theta_\ast(t,x_{*}, 0)).
\end{equation}
In the new coordinates we rewrite (\ref{ns_euler}) as  
\begin{equation}\label{ns_geometric_non-pert} 
\begin{cases}
\partial_t  u  - \partial_{t}  \bar{\eta} \tilde{b} K \partial_{3} u +  {u} \cdot \nabla_{\mathcal{A}}   u
+ \text{div}_{\mathcal{A}} S_{\mathcal{A}} (p,u)= 0
 & \text{in }
\Omega  \\
\text{div}_{\mathcal{A}}    {u}=0 & \text{in }\Omega  \\
\partial_t  {\eta}- {u} \cdot \mathcal{N}  = 0 &
\text{on } \Sigma \\
S_{\mathcal{A}}( {p}, {u}) \mathcal{N} = { \eta} \mathcal{N} - \sigma(\tilde{c}) H \mathcal{N} - \sqrt{1+ |\nabla_{*}{\eta}|^{2}}\sigma^{\prime} (\tilde{c}) \nabla_{\Gamma} \tilde{c}  & \text{on } \Sigma \\
\partial_t \tilde{c} + {u}\cdot \nabla_{*} \tilde{c} +  \tilde{c} \ \text{div}_{\Gamma}  {u}- \gamma \Delta_{\Gamma} \tilde{c}  =  0& \text{on }\Sigma \\
 {u} = 0 & \text{on } \Sigma_{b}.
\end{cases}
\end{equation}

Here we have written the differential operators $\naba$, $\diva$, and $\da$ with their actions given by $(\naba f)_i := \mathcal{A}_{ij} \p_j f$, $\diva X := \mathcal{A}_{ij}\p_j X_i$, and $\da f = \diva \naba f$ for appropriate $f$ and $X$; for $u\cdot \naba u$ we mean $(u \cdot \naba u)_i := u_j \mathcal{A}_{jk} \p_k u_i$.  We have also  written $S_{\mathcal{A}}({p}, {u}) : = ( {p}I - \mathbb{D}_{\mathcal{A}} {u})$ for $(\mathbb{D}_{\mathcal{A}}  {u})_{ij} = \sum_{k} (\mathcal{A}_{ik} \partial_{k}  {u}_{j} +  \mathcal{A}_{jk} \partial_{k}  {u}_{i})$.    Also, $\mathcal{N} = (- \nabla_{*} \eta, 1)$ denotes the non-unit normal on $\Gamma(t)$,  $\nab_\Gamma$, $\nab_\ast$, $\diverge_\Gamma$ are the differential operators defined in Appendix \ref{app_surf}, and $H$ is still of the form \eqref{mean_curvature}.

\subsection{Perturbation form  }

It will be convenient to reformulate \eqref{ns_geometric_non-pert} in a perturbative form for the surfactant concentration, i.e. to view the solution as perturbed around the equilibrium configuration.  To this end we define the perturbation  
\begin{equation}\label{c_def}
c = \tilde{c}- c_0. 
\end{equation}
Then $(u,p,\eta,c)$ satisfy   
\begin{equation}\label{ns_geometric}
\begin{cases}
\partial_t  u  - \partial_{t}  \bar{\eta} \tilde{b} K \partial_{3} u +  {u} \cdot \nabla_{\mathcal{A}}   u
+ \text{div}_{\mathcal{A}} S_{\mathcal{A}} (p,u)= 0
 & \text{in }
\Omega  \\
\text{div}_{\mathcal{A}}    {u}=0 & \text{in }\Omega  \\
\partial_t  {\eta}- {u} \cdot \mathcal{N}  = 0 &
\text{on } \Sigma \\
S_{\mathcal{A}}( {p}, {u}) \mathcal{N} = { \eta} \mathcal{N} - \sigma(c + c_0 ) H \mathcal{N} - \sqrt{1+ |\nabla_{*}{\eta}|^{2}}\sigma^{\prime} (c+ c_0 ) \nabla_{\Gamma} c  & \text{on } \Sigma \\
\partial_t c + {u}\cdot \nabla_{*} c +  (c + c_0) \ \text{div}_{\Gamma}  {u}- \gamma \Delta_\Gamma c  =  0& \text{on }\Sigma \\
 {u} = 0 & \text{on } \Sigma_{b}.
\end{cases}
\end{equation}
Throughout the rest of the paper we will employ the notation
\begin{equation}\label{surf_ten_pert}
 \sigma_0 = \sigma(c_0) \text{ and } \sigma'_0 = \sigma'(c_0).
\end{equation}

\section{Main results and discussion }

\subsection{Main results  }

In order to state our main results we first define the energy and dissipation functionals that we shall use in our analysis.  We define the energy via
\begin{equation}\label{def_E}
\begin{split}
\mathcal{E} &: = \| u \|_{H^{2}(\Omega)}^{2} + \| \partial_{t}  u \|_{H^{0}(\Omega)}^{2}
+ \| p  \|_{H^{1}(\Omega)}^{2} + \| \eta \|_{H^{3}(\Sigma)}^{2} + \| \partial_{t} \eta \|_{H^{\frac{3}{2}}(\Sigma)}^{2}
+ \| \partial_{t}^{2} \eta \|_{H^{- \frac{1}{2}}(\Sigma)}^{2}  \\
& \ \  \ \ \  + \| c \| _{H^{2}(\Sigma)}^{2} + \| \partial_{t}  c \|_{H^{0}(\Sigma)}^{2},
\end{split}\end{equation}
and we define the dissipation as
\begin{equation}\begin{split}\label{def_D}
\mathcal{D} & := \| u \|_{H^{3}(\Omega)}^{2} + \| \partial_{t} u\|_{H^{1}(\Omega)}^{2} 
  + \| \eta \|_{H^{\frac{7}{2}}(\Sigma) }^{2}+   \| \partial_{t}\eta \|_{H^{\frac{5}{2}}(\Sigma)}^{2}
+  \| \partial_{t}^{2}\eta \|_{H^{\frac{1}{2}}(\Sigma)}^{2}
\\
& \ \   \   + \| p \|_{H^{2}(\Omega)}^{2} 
+ \|  c\|_{H^{3}(\Sigma)}^{2} + \| \partial_{t} c  \|_{H^{1}(\Sigma)}^{2}.
\end{split}
\end{equation}
Here the spaces $H^s$ denote the usual $L^2-$based Sobolev spaces of order $s$.  

Our main result is an a priori estimate for solution to \eqref{ns_geometric}.

\begin{thm_intro}[Proved later in Theorem \ref{aprioris}]\label{aprioris_intro}
Suppose that $(u,p,\eta,c)$ solves \eqref{ns_geometric} on the temporal interval $[0,T]$.  Let $\E$ and $\D$ be as defined in \eqref{def_E} and \eqref{def_D}. Then there exists a universal constant $0 < \delta_\ast$ (independent of $T$) such that if 
\begin{equation*}
 \sup_{0\le t \le T} \E(t) \le \delta_\ast \text{ and } \int_0^T \D(t) dt < \infty,
\end{equation*}
then
\begin{equation*}
\sup_{0\le t \le T} e^{\lambda t} \E(t) + \int_0^T \D(t)dt \ls \E(0)
\end{equation*}
for all $t \in [0,T]$, where $\lambda >0$ is a universal constant.
\end{thm_intro}

This result says that if solutions exist for which the energy functional $\mathcal{E}$ remains small and the dissipation functional is integrable in time, then in fact we have much stronger information: the energy decays exponentially and the integral of the dissipation is controlled by the initial energy.  In order for this result to be useful we must couple it with a local existence result.  By now it is well-understood how to construct local-in-time solutions for solutions to problems of the form \eqref{ns_geometric} once the corresponding a priori estimates are understood: we refer for instance to \cite{GT1,WTK,Wu} for local existence results in spaces determined by energies and dissipations of the form \eqref{def_E} and \eqref{def_D}.  Consequently, in the interest of brevity, we will not attempt to prove a local existence result in the present paper.  Instead we will simply state the result that one can prove by modifying the known methods in straightforward ways.

Given the initial data $u_0,\eta_0,\tilde{c}_0$, we need to construct the initial data $\partial_t u(\cdot,0)$, $\partial_t \eta(\cdot,0)$, $\dt c(\cdot,0)$, and $p(\cdot,0)$.  To construct these we  require a compatibility condition for the data.  To state this properly we  define the orthogonal projection onto the tangent space of the surface $\Gamma(0) = \{x_3 = \eta_0(x_\ast)\}$ according to 
\begin{equation*}
 \Pi_0 v = v - (v\cdot \N_0) \N_0 \abs{\N_0}^{-2}
\end{equation*}
for $\N_0 = (-\p_1 \eta_0,-\p_2 \eta_0,1)$.  Then the compatibility conditions for the data read
\begin{equation}\label{compat_cond}
\begin{cases}
\Pi_0 (\sg_{\mathcal{A}_0} u_0 \N_0) - \sqrt{1+\abs{\nab_\ast \eta_0}^2} \sigma'(\tilde{c}_0) \nab_{\Gamma_0} \tilde{c}_0 =0 &\text{on }\Sigma \\
\diverge_{\mathcal{A}_0} u_0 =0 & \text{in }\Omega \\
u_0 =0 & \text{on } \Sigma_b,
\end{cases}
\end{equation}
where here $\mathcal{A}_0$ and $\Gamma_0$ are determined by $\eta_0$.  To state the local result we will also need to define  $\H1 := \{ u \in H^1(\Omega) \;\vert\;  u\vert_{\Sigma_b}=0\}$ and 
\begin{equation}\label{X_space_def}
\mathcal{X}_T = \{u \in L^2([0,T];\H1)\;\vert\; \diverge_{\mathcal{A}(t)} u(t)=0 \text{ for a.e. } t\}. 
\end{equation}

Having stated the compatibility conditions, we can now state the local existence result.

\begin{thm_intro}\label{lwp_intro}
Let $u_0 \in H^2(\Omega)$,   $\eta_0 \in H^3(\Sigma)$, and $\tilde{c}_0 \in H^2(\Sigma)$, and assume that $\eta_0$ and $\tilde{c}_0$ satisfy \eqref{z_avg} and \eqref{conserv_c}, where $c_0 \in (0,\infty)$ is a fixed equilibrium surfactant concentration.  Further assume that the initial data satisfy the compatibility conditions of \eqref{compat_cond}. Let $T >0$. Then there exists a universal constant $\kappa >0$ such that if 
\begin{equation*}
 \ns{u_0}_{H^2(\Omega)} + \ns{\eta_0}_{H^3(\Sigma)} + \ns{\tilde{c}_0 - c_0}_{H^2(\Sigma)} \le \kappa, 
\end{equation*}
then there exists a unique (strong) solution $(u,p,\eta,c)$ to \eqref{ns_geometric} on the temporal interval $[0,T]$ satisfying the estimate 
\begin{equation}\label{lwp_intro_0}
 \sup_{0\le t \le T} \E(t) + \int_0^T \D(t)dt   + \int_0^T \ns{\dt^2 c(t)}_{H^{-1}(\Sigma)}dt + \ns{\dt^{2N+1} u}_{(\mathcal{X}_T)^*} \ls \E(0).
\end{equation}
Moreover, $\eta$ is such that the mapping $\Theta(\cdot,t)$, defined by \eqref{mapping_def}, is a $C^{1}$ diffeomorphism for each $t \in [0,T]$. 
\end{thm_intro}

\begin{remark}
All of the computations involved in the a priori estimates that we develop in this paper are justified by Theorem \ref{lwp_intro}.   
\end{remark}

With local existence, Theorem \ref{lwp_intro}, and a priori estimates, Theorem \ref{aprioris_intro}, in hand, we may couple them to deduce a global existence and decay result. 

\begin{thm_intro}[Proved later in Section \ref{sec_mains}]\label{gwp_intro}
Let $u_0 \in H^2(\Omega)$,   $\eta_0 \in H^3(\Sigma)$, and $\tilde{c}_0 \in H^2(\Sigma)$, and assume that $\eta_0$ and $\tilde{c}_0$ satisfy \eqref{z_avg} and \eqref{conserv_c}, where $c_0 \in (0,\infty)$ is a fixed equilibrium surfactant concentration.  Further assume that the initial data satisfy the compatibility conditions of \eqref{compat_cond}.   Then there exists a universal constant $\kappa >0$ such that if 
\begin{equation}\label{gwp_intro_00}
 \ns{u_0}_{H^2(\Omega)} + \ns{\eta_0}_{H^3(\Sigma)} + \ns{\tilde{c}_0 - c_0}_{H^2(\Sigma)} \le \kappa, 
\end{equation}
then there exists a unique (strong) solution $(u,p,\eta,c)$ to \eqref{ns_geometric} on the temporal interval $[0,\infty)$ satisfying the estimate 
\begin{equation}\label{gwp_intro_01}
\sup_{t \ge 0 } e^{\lambda t} \E(t) + \int_0^\infty \D(t)dt \ls \E(0),
\end{equation}
where $\lambda >0$ is a universal constant.  
\end{thm_intro}

\begin{remark}
 Theorem \ref{gwp_intro} can be interpreted as an asymptotic stability result: the equilibria $u=0$, $p=0$, $\eta=0$, $\tilde{c}=c_0$ are asymptotically stable, and solutions return to equilibrium exponentially fast.
\end{remark}

\begin{remark}
The surface function $\eta$ is sufficiently small to guarantee that the mapping $\Theta(\cdot,t)$, defined in \eqref{mapping_def}, is a diffeomorphism for each $t\ge 0$.  As such, we may change coordinates to $y \in \Omega(t)$ to produce a global-in-time, decaying solution to \eqref{ns_euler}.
\end{remark}

It is worth comparing the result of Theorem \ref{gwp_intro} to what is known about horizontally-periodic surfactant-free viscous surface waves with and without surface tension.  Without surfactants but with a fixed surface tension $\sigma >0$, the problem \eqref{ns_geometric} admits small-data global-in-time solutions that decay to equilibrium exponentially fast, as was proved in \cite{nishida_1}.  If surface tension is neglected, i.e. $\sigma=0$, then again small-data solutions exist for all time, but they decay at an algebraic rate determined by the regularity of the data, as proved in \cite{GT2}.  Thus we see that although surfactants dynamically adjust the surface tension, the behavior of solutions is comparable to solutions to the problem with a fixed surface tension.

\subsection{Summary of methods and plan of paper  }

Our analysis employs a nonlinear energy method based on a higher-regularity modification of the basic energy-dissipation equation \eqref{intro_ed_combo} for solutions to \eqref{ns_geometric}.  Below we will summarize the steps needed to implement this method and how they relate to the organization of the paper.

\textbf{Horizontal energy estimates:} Certainly the form of \eqref{intro_ed_combo} is tied to the choice of boundary conditions in \eqref{ns_geometric}, and so we can only appeal to \eqref{intro_ed_combo} to gain control of derivatives of solutions in directions that are compatible with the boundary conditions.  The choice of $\Omega$ dictates that these are precisely the horizontal spatial directions, corresponding to the operators $\p_1$ and $\p_2$, and the temporal direction, corresponding to $\dt$.  We will get estimates for one temporal and up to two spatial horizontal derivatives; this choice comes from the parabolic scaling of the Navier-Stokes equations, which dictates that each temporal derivative behaves like two spatial derivatives.  Our choice for this number of derivatives comes from the ability to close our energy method: we cannot close with fewer than one temporal derivative, and we get no improvement with more.

The differential operators in \eqref{ns_geometric} do not commute with the operators $\p_1,\p_2,\dt$, so we do not arrive at a ``horizontal'' energy-dissipation equation of exactly the same form as \eqref{intro_ed_combo}.  Indeed, there are nonlinear interaction terms that lead us to an equation of the form (roughly speaking) 
\begin{equation}\label{summary_1}
\frac{d}{dt} \bar{\E} + \bar{\D} = \mathcal{I},
\end{equation}
where $\bar{\E}$ and $\bar{\D}$ are the ``horizontal'' energy and dissipation, respectively, and $\mathcal{I}$ denotes the nonlinear interaction term. 

In order to make $\mathcal{I}$ manageable within our functional framework we are forced to employ different strategies in dealing with spatial derivatives than in dealing with temporal derivatives.  Indeed, for temporal derivatives we must take advantage of certain ``geometric'' identities related to the operators in \eqref{ns_geometric}, whereas for spatial derivatives it is more convenient to shift to constant-coefficient operators for which the connection to the boundary geometry is obfuscated.  These strategies are developed in Section \ref{sec_ed}, and it is here where we make precise the form of the terms appearing in $\mathcal{I}$.

\textbf{Nonlinear estimates:} The next step in our nonlinear energy method is to estimate the terms appearing in the nonlinearity $\mathcal{I}$.  It is not enough for us to be able to control $\mathcal{I}$ within our functional setting: we must have estimates of a particular structural form in order to be able to effectively combine the estimates with \eqref{summary_1}.  This structure roughly requires that we be able to ``absorb'' $\mathcal{I}$ into the dissipation on the left side of \eqref{summary_1}.  More precisely, we seek to prove that (again, roughly speaking)
\begin{equation}\label{summary_2}
 \abs{\mathcal{I}} \ls \sqrt{\E} \D.
\end{equation}
Note here that $\E$ and $\D$ are the full energy and dissipation given by \eqref{def_E} and \eqref{def_D}, and not their horizontal counterparts $\bar{\E}$ and $\bar{\D}$.  This is by necessity: the nonlinear terms in $\mathcal{I}$ cannot be controlled simply in terms of $\bar{\E}$ and $\bar{\D}$.  Structural estimates of the form \eqref{summary_2} are derived in Section \ref{sec_nlin}.

\textbf{Enhanced estimates:}  The next step is to show that, at least in a small energy context, control of the horizontal energy and dissipation actually provides control of non-horizontal derivatives and of the pressure.  More precisely, we aim to prove estimates of the form 
\begin{equation}\label{summary_3}
 \E \ls \bar{\E}  \text{ and } \D \ls \bar{\D} 
\end{equation}
by employing a variety of elliptic estimates and auxiliary estimates.  It is here that the ``non-geometric'' form of \eqref{ns_geometric} becomes particularly useful, as it allows us to apply elliptic theory for the standard constant-coefficient Stokes problem rather than deal with the Stokes problem with coefficients in Sobolev spaces.  It is also worth noting that in this analysis the horizontal derivatives and the temporal derivatives play distinct and important roles.  In particular, the horizontal derivatives are used along with trace theory and Stokes estimates with Dirichlet boundary conditions in a crucial way to decouple certain bulk estimates for $u$ and $p$ from estimates for $\eta$ and $c$.  The details of these enhanced estimates are developed in  Section \ref{sec_aprioris}.

\textbf{Bounds and decay:}  The final step of our nonlinear energy method combines the above to deduce a closed system of a priori estimates that yield both bounds and decay information.  In particular, we use \eqref{summary_1}, \eqref{summary_2}, and \eqref{summary_3} together with the coercivity estimate  $\bar{\E} \le \E \ls \D$ to show that (again in a small energy context)
\begin{equation*}
 \frac{d}{dt} \bar{\E} + \lambda \bar{\E} \le 0 \text{ and } \int_0^T \D(t) dt \ls \E(0)
\end{equation*}
for some universal constant $\lambda$.  Upon integrating the first differential inequality and again appealing to \eqref{summary_3} we find that $\E$ decays exponentially.  The latter inequality tells us that the dissipation is integrable.  We complete this argument and develop  the proofs of Theorems \ref{aprioris_intro} and \ref{gwp_intro} in Section \ref{sec_mains}.

\subsection{Definitions and terminology}\label{def_and_term}

We now mention some of the definitions, bits of notation, and conventions that we will use throughout the paper.

\textbf{  Einstein summation and constants: }  We will employ the Einstein convention of summing over  repeated indices for vector and tensor operations.  Throughout the paper $C>0$ will denote a generic constant that can depend on the parameters of the problem, and $\Omega$, but does not depend on the data, etc.  We refer to such constants as ``universal.''  They are allowed to change from one inequality to the next.  We will employ the notation $a \ls b$ to mean that $a \le C b$ for a universal constant $C>0$.

\textbf{  Norms: } We write $H^k(\Omega)$ with $k\ge 0$ and  $H^s(\Sigma)$ with $s \in \Rn{}$ for the usual Sobolev spaces.  We will typically write $H^0 = L^2$.  To avoid notational clutter, we will avoid writing $H^k(\Omega)$ or $H^k(\Sigma)$ in our norms and typically write only $\norm{\cdot}_{k}$ for $H^k(\Omega)$ norms and $\norm{\cdot}_{\Sigma,s}$ for $H^s(\Sigma)$ norms.

\section{Energy-dissipation equations }\label{sec_ed}

In this section we present two forms of the energy-dissipation equation for solutions to \eqref{ns_geometric}.  The two forms are determined by different ways of linearizing \eqref{ns_geometric}.  The first form is ideal for estimating temporal derivatives, while the second form is ideal for estimating horizontal spatial derivatives and for elliptic regularity.  Finally, we conclude the section with a key lemma for handling nonlinearities.

\subsection{Geometric form }

Here we consider a linear formulation of \eqref{ns_geometric} that is faithful to the geometric significance of the coefficients in \eqref{ns_euler}.  We assume that $u$ and $\eta$ are given and that $\A,\N, J$, etc. are given in terms of $\eta$ as in \eqref{ns_geometric}. We then consider the following system for $(v,q, \zeta,h)$:
\begin{equation}\label{ns_geometric_hybrid}
\begin{cases}
\partial_t  v  - \partial_{t}  \bar{\eta} \tilde{b} K \partial_{3} v +  {u} \cdot \nabla_{\mathcal{A}}   v
+ \text{div}_{\mathcal{A}} S_{\mathcal{A}} (q,v)= F^{1}
 & \text{in }
\Omega  \\
\text{div}_{\mathcal{A}}    {v}=F^{2} & \text{in }\Omega  \\
S_{\mathcal{A}}( {q}, {v}) \mathcal{N} = { \zeta} \mathcal{N} - \sigma_{0} \Delta_{*} \zeta \mathcal{N} - \sigma^{\prime}_{0} \nabla_{ *} h + F^{3}  & \text{on } \Sigma \\
\partial_t  {\zeta}- {v} \cdot \mathcal{N}  = F^{4} &
\text{on } \Sigma \\
\partial_t  {h}  +   c_0 \ \text{div}_{*}  {v}- \gamma \Delta_{ *}  {h}  = F^{5}& \text{on }\Sigma \\
 {u} = 0 & \text{on } \Sigma_{b},
\end{cases}
\end{equation}
where $\sigma_0$ and $\sigma'_0$ are as defined in \eqref{surf_ten_pert}.

We now record the energy-dissipation equality associated to solutions to \eqref{ns_geometric_hybrid}.

\begin{prop}\label{ed_geometric}
Let $u$ and $\eta$ be given and solve \eqref{ns_geometric}.  If $(v,q,\zeta,h)$ solve \eqref{ns_geometric_hybrid} then 
\begin{multline}\label{ed_geometric_0}
 \frac{d}{dt} \left( \int_{\Omega} \frac{|v|^{2}}{2} J +  \int_{\Sigma} \frac{ |\zeta|^{2}}{2} 
+ \int_{\Sigma} \sigma_{0} \frac{|\nabla_{*} \zeta|^{2}}{2} + 
\frac{- \sigma_{0}^{\prime}}{c_0} \int_{\Sigma} \frac{|h|^{2}}{2}
 \right) + \int_{\Omega} \frac{|\mathbb{D}_{\mathcal{A}} v|^{2}}{2} J
 +  \frac{-\gamma  \sigma_{0}^{\prime}}{c_0} \int_{\Sigma} |\nabla_{*} h|^{2} 
\\
=  \int_{\Omega}  ( v\cdot F^{1} + q F^{2})J + \int_{\Sigma} -v\cdot F^{3}   + \int_{\Sigma} (\zeta - \sigma_{0} \Delta_{*} \zeta) F^{4} +  \frac{- \sigma_{0}^{\prime}}{c_0} \int_{\Sigma} h \cdot F^{5}.
\end{multline}
\end{prop}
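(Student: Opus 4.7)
The plan is to test the momentum equation $\eqref{ns_geometric_hybrid}_1$ with $Jv$, test the surfactant equation $\eqref{ns_geometric_hybrid}_5$ with $-\sigma_0'h/c_0$, use the stress boundary condition $\eqref{ns_geometric_hybrid}_3$ together with the kinematic identity $\eqref{ns_geometric_hybrid}_4$ to convert the boundary trace produced by integration by parts into the surface energies, and finally sum the two resulting identities.  The essential bookkeeping observation is that the Marangoni-type boundary term $-\sigma_0'\int_\Sigma v\cdot\nabla_* h$ generated in the momentum test is exactly cancelled by the coupling term $\sigma_0'\int_\Sigma v\cdot\nabla_* h$ that emerges when the $c_0\,\mathrm{div}_*v$ factor in the surfactant test is integrated by parts on $\Sigma$.

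\textbf{Momentum test.} Multiplying $\eqref{ns_geometric_hybrid}_1$ by $Jv$ and integrating, I would treat $\dt v$ first to obtain $\frac{d}{dt}\int_\Omega\hal|v|^2 J-\int_\Omega\hal|v|^2\dt J$.  The $\dt J$ correction is absorbed by combining the second and third terms through three geometric identities: the Piola identity $\p_k(J\mathcal{A}_{jk})=0$, the direct computation $\p_3(\dt\bar\eta\,\tilde b)=\dt J$ from the formulas for $J$ and $\tilde b$, and the boundary trace $J\mathcal{A} e_3|_\Sigma=\mathcal{N}$.  Integrating by parts in $x_3$ shows $-\int_\Omega Jv\cdot\dt\bar\eta\tilde bK\p_3v=\int_\Omega\hal|v|^2\dt J-\int_\Sigma\dt\eta\,\hal|v|^2$ (the $\Sigma_b$ trace vanishes since $\tilde b|_{\Sigma_b}=0$), while the convection term integrated by parts through Piola and the divergence-freeness of $u$ contributes $\int_\Sigma u\cdot\mathcal{N}\,\hal|v|^2=\int_\Sigma\dt\eta\,\hal|v|^2$ by $\eqref{ns_geometric}_3$.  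These cancellations reconstruct $\frac{d}{dt}\int_\Omega\hal|v|^2 J$ without residue.  The stress divergence then integrates by parts, again via Piola, and by the symmetry identity $(\mathbb{D}_\mathcal{A}v)_{ij}\mathcal{A}_{jk}\p_kv_i=\hal|\mathbb{D}_\mathcal{A}v|^2$, yielding the bulk dissipation $\int_\Omega\hal|\mathbb{D}_\mathcal{A}v|^2 J$, the pressure--divergence pairing $-\int_\Omega q F^2 J$ (using $\mathrm{div}_\mathcal{A}v=F^2$), and a boundary term $\int_\Sigma v\cdot S_\mathcal{A}(q,v)\mathcal{N}$, with no $\Sigma_b$ contribution since $v=0$ there.

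\textbf{Boundary and surfactant.} Substituting the stress boundary condition gives $\int_\Sigma v\cdot S_\mathcal{A}(q,v)\mathcal{N}=\int_\Sigma(\zeta-\sigma_0\Delta_*\zeta)v\cdot\mathcal{N}-\sigma_0'\int_\Sigma v\cdot\nabla_* h+\int_\Sigma v\cdot F^3$.  Replacing $v\cdot\mathcal{N}$ by $\dt\zeta-F^4$, using the constancy of $\sigma_0$ to commute through $\dt$, and integrating the Laplacian pairing by parts on $\Sigma$, the first piece becomes $\frac{d}{dt}\int_\Sigma\bigl(\hal|\zeta|^2+\sigma_0\hal|\nabla_*\zeta|^2\bigr)-\int_\Sigma(\zeta-\sigma_0\Delta_*\zeta)F^4$.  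Testing $\eqref{ns_geometric_hybrid}_5$ against $-\sigma_0'h/c_0$ yields the remaining surface energy and dissipation together with the coupling $-\sigma_0'\int_\Sigma h\,\mathrm{div}_*v=\sigma_0'\int_\Sigma v\cdot\nabla_*h$; this cancels its partner from the momentum test, and assembling all the pieces produces \eqref{ed_geometric_0}.  The delicate step, and what I expect to be the main obstacle, is the bookkeeping in the bulk momentum identity: one must verify that the Jacobian correction $\dt J$, the interior contribution of the $\dt\bar\eta\tilde bK\p_3v$ term, and the boundary trace of the convection integral combine with precisely the right signs on $\Sigma$ and vanish on $\Sigma_b$, so that no residue survives and $\frac{d}{dt}\int_\Omega\hal|v|^2J$ is recovered cleanly.
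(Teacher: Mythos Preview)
Your proposal is correct and follows essentially the same route as the paper: test $\eqref{ns_geometric_hybrid}_1$ against $Jv$, integrate the stress divergence by parts to produce the dissipation, the $qF^2J$ pairing, and the $\Sigma$-trace, insert the stress and kinematic boundary conditions, and cancel the Marangoni cross term against the one obtained by testing the surfactant equation with $-\sigma_0' h/c_0$.  The only difference is cosmetic: the paper relegates the transport identity $I=\frac{d}{dt}\int_\Omega\hal|v|^2 J$ to a reference, whereas you spell out the underlying cancellations (Piola, $\p_3(\dt\bar\eta\,\tilde b)=\dt J$, $\tilde b\vert_{\Sigma_b}=0$, $u\cdot\mathcal{N}=\dt\eta$) explicitly and correctly.
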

\begin{proof}
We take the dot product of the first equation \eqref{ns_geometric_hybrid} with $J v$ and integrate over $\Omega$ to find that 
\begin{equation*}
I + II = III, 
\end{equation*}
for 
\begin{eqnarray*}
I &=& \int_{\Omega} \p_{t} v_{i} J v_{i} - \p_{t} \bar{\eta} \tilde{b} \p_{3} v_{i} v_{i} + u_{j} \mathcal{A}_{jk} \p_{k} v_{i} J v_{i},\\
II &=& \int_{\Omega} \mathcal{A}_{jk} \p_{k} S_{ij}(v,q) J v_{i},\ \ \ 
III  \ = \  \int_{\Omega} F^{1} \cdot v J.
\end{eqnarray*}

A simple computation (see for instance Lemma 2.1 of \cite{GT2} for details) shows that 
\begin{equation*}
I = \frac{d}{dt} \int_{\Omega} \frac{|v|^{2}}{2} J. 
\end{equation*}

To handle the term $II$ we first integrate by parts:
\begin{eqnarray*}
II&=& \int_{\Omega} - \mathcal{A}_{jk} S_{ij}(v,q) J \p_{k} v_{i}  + \int_{\Sigma} J \mathcal{A}_{j3} S_{ij}(v,q) v_{i}\\
&=& \int_{\Omega} - q \mathcal{A}_{ik} \p_{k} v_{i} J + J \frac{|\mathbb{D}_{\mathcal{A}} v|^{2}}{2} + \int_{\Sigma} S_{ij}(v,q) \mathcal{N}_{j} v_{i}\\
&=& \int_{\Omega} - q J F^{2} + J \frac{|\mathbb{D}_{\mathcal{A}} v|^{2}}{2}
+ \int_{\Sigma} (\zeta \mathcal{N}- \sigma_{0} \Delta_{*} \zeta \mathcal{N} - \sigma_{0}^{\prime} \nabla_{*} h  ) \cdot v + F^{3} \cdot v.
\end{eqnarray*}
From the forth equation of \eqref{ns_geometric_hybrid} we may compute
\begin{eqnarray*}
 \int_{\Sigma} (\zeta \mathcal{N}- \sigma_{0} \Delta_{*} \zeta \mathcal{N}    ) \cdot v 
&=& \int_{\Sigma} (\zeta  - \sigma_{0} \Delta_{*} \zeta  ) (\p_{t} \zeta - F^{4})\\
&=& \frac{d}{dt} \left( \int_{\Sigma} \frac{|\zeta|^{2}}{2} + \int_{\Sigma} \sigma_{0} \frac{|\nabla_{*} \zeta|^{2}}{2} \right)
- \int_{\Sigma} (\zeta  - \sigma_{0} \Delta_{*} \zeta  ) F_{4}.
\end{eqnarray*}
Now we multiply the fifth equation of \eqref{ns_geometric_hybrid} by $\frac{- \sigma_{0}^{\prime}}{ c_0} h$  and integrate over $\Sigma$ to see that 
\begin{eqnarray*}
\frac{d}{dt} \left( \frac{- \sigma_{0}^{\prime}}{c_0} \int_{\Sigma} \frac{|h|^{2}}{2} \right)+ \sigma_{0}^{\prime}   \int_{\Sigma} v \cdot \nabla_{*} h 
+ \frac{- \gamma \sigma_{0}^{\prime}}{c_0}  \int_{\Sigma} |\nabla_{*} h|^{2} =\frac{- \sigma_{0}^{\prime}}{c_0}  \int_{\Sigma} F^{5}  h.
\end{eqnarray*}
The equality \eqref{ed_geometric_0} then follows by combining the above computations of $I$ and $II$ with the definition of $III$.
\end{proof}

We will employ the form \eqref{ns_geometric_hybrid} to study the temporal derivative of solutions to \eqref{ns_geometric}.  That is, we will apply $\dt$ to \eqref{ns_geometric} to deduce that $(v,q, \zeta,h) = (\p_{t}u, \p_{t} p, \p_{t} \eta, \p_{t} c)$ satisfy \eqref{ns_geometric_hybrid} for certain terms $F^i$.  Below we record the form of these forcing terms $F^i$, $i=1,\dotsc,5$ for this particular problem.  For brevity we will use $\tilde{c}$ in these even though $c$ is the unknown of interest; recall that they are related via \eqref{c_def}.

We have that $F^{1} = \sum_{i=1}^{6}F^{1,i},$ for 
\begin{equation}\label{nlin_F1}
\begin{split}
F^{1,1}_{i} &:= \p_{t} (\p_{t} \bar{\eta} \tilde{b} K) \p_{3} u_{i} ,\\
F^{1,2}_{i} &:= - \p_{t}(u_{j} \mathcal{A}_{jk}) \p_{k} u_{i} + \p_{t}\mathcal{A}_{ik} \p_{k}  p ,\\
F^{1,3}_{i} &:= \p_{t} \mathcal{A}_{j\ell} \p_{\ell} ( \mathcal{A}_{im} \p_{m} u_{j} + \mathcal{A}_{jm} \p_{m} u_{i}),\\
F^{1,4}_{i} &:=  \mathcal{A}_{j\ell} \p_{\ell} \p_{t} ( \mathcal{A}_{im} \p_{m} u_{j} + \mathcal{A}_{jm} \p_{m} u_{i}),\\
F^{1,5}_{i} &:= \p_{t}^{2} \bar{\eta} \tilde{b} K \p_{3} u_{i} ,  \ \ \text{and} \ \ 
F^{1,6}_{i}  :=  \mathcal{A}_{jk} \p_{k} (\p_{t} \mathcal{A}_{i\ell} \p_{\ell} u_{j} + \p_{t} \mathcal{A}_{j\ell} \p_{\ell} u_{i}),
\end{split}
\end{equation}
\begin{equation}\label{nlin_F2}
F^{2}   :=    - \p_{t} \mathcal{A}_{ij} \p_{j} u_{i}, 
\end{equation}
$F^3 = F^{3,1} + F^{3,2} + F^{3,3}$, where for $i=1,2,3$ we have
\begin{equation}\label{nlin_F3}
\begin{split}
F^{3,1}_{i} &:=  (\eta - p) \p_{t} \mathcal{N}_{i}
  +  \left( \mathcal{A}_{ik} \p_{k}u_{j} + \mathcal{A}_{jk} \p_{k} u_{i}  \right) \p_{t} \mathcal{N}_{j}
  +  \left( \p_{t} \mathcal{A}_{ik} \p_{k} u_{j} + \p_{t} \mathcal{A}_{jk} \p_{k} u_{i} \right) \mathcal{N}_{j},
  \\
F^{3,2}_{i} &:= - \sigma^{\prime}(\tilde{c}) \p_{t} c H \mathcal{N}_{i} - ( \sigma(\tilde{c}) - \sigma_{0}) \p_{t} H \mathcal{N}_{i}
- (\sigma_{0} \p_{t} H -\sigma_{0} \p_{t} \Delta_{*} \eta) \mathcal{N}_{i}  
- \sigma(\tilde{c}) H \p_{t} \mathcal{N}_{i},\\
F^{3,3} _{i} &:= - \frac{\nabla_{*} \eta \cdot \nabla_{*} \p_{t} \eta}{
\sqrt{
1+ |\nabla_{*} \eta|^{2}}} \sigma^{\prime}(\tilde{c}) (\nabla_\Gamma \tilde{c})_{i}- \sqrt{1+ |\nabla_{*} \eta|^{2}}\sigma^{\prime\prime}(\tilde{c} ) \p_{t} \tilde{c} (\nabla_\Gamma \tilde{c})_{i},\\
 & +  \sqrt{1+ |\nabla_{*} \eta|^{2}} \sigma^{\prime}(\tilde{c}) \nu_{i} (\nu_{*} \cdot \nabla_{*}) \p_{t}c-\sqrt{1+ |\nabla_{*} \eta|^{2}} \sigma^{\prime}(\tilde{c})
\left\{
\p_{t} \nu_{i} (\nu_{*} \cdot \nabla_{*}) \tilde{c} + \nu_{i} (\p_{t} \nu_{*} \cdot \nabla_{*}) \tilde{c}
\right\}, 
\end{split}
\end{equation}
\begin{equation}\label{nlin_F4}
 F^{4} := \p_{t} D \eta \cdot u,
\end{equation}
and  
\begin{equation}\label{nlin_F5}
 F^5 := \dt \left\{   - u\cdot \nabla_{*} c -  c  \diverge_\Gamma u 
+ \gamma [ \Delta_\Gamma c -\Delta_{*} c]  - c_{0}[ \diverge_\Gamma u - \mathrm{div}_{*} u ] \right\}.
\end{equation}

\subsection{Perturbed linear form  }

Next we consider an alternate way of linearizing \eqref{ns_geometric} that eliminates the $\A$ coefficients in favor or constant coefficients.  This is advantageous for applying elliptic regularity results and is the context in which we will derive estimates for horizontal spatial derivatives.  We may rewrite \eqref{ns_geometric} as
\begin{equation}\label{ns_perturbed}
\begin{cases}
\partial_t u   -\Delta u   + \nabla  p =  G^{1}& \text{in }
\Omega  \\
\text{div} \ {u}=G^{2} & \text{in }\Omega  \\
(pI - \mathbb{D}u - \eta I + \sigma_{0}  \Delta_{*} \eta ) e_{3} + \sigma^{\prime}_{0}  \nabla_{*} c
=  G^{3} & \text{on } \Sigma \\
\partial_t \eta - u_{3}= G^{4}&
\text{on } \Sigma \\
\partial_t {c}   +    c_0   \text{div}_{*}u - \gamma \Delta_{ *}c
= G^{5}  & \text{on }\Sigma \\
u = 0 & \text{on } \Sigma_{b},
\end{cases}
\end{equation}
where $\sigma_0$ and $\sigma'_0$ are as defined in \eqref{surf_ten_pert}.  Here we have written the nonlinear terms $G^i$ for $i=1,\dotsc,5$ as follows.  We write $G^{1 }   := G^{1,1} + G^{1,2} + G^{1,3} + G^{1,4}+ G^{1,5},$ for  
\begin{equation}\label{nlin_G1}
\begin{split}
G^{1,1}_{i} & : = (\delta_{ij} - \mathcal{A}_{ij}) \partial_{j} p  ,\\
G^{1,2}_{i} & : = u_{j} \mathcal{A}_{jk} \partial_{k} u_{i},\\
G^{1,3}_{i} & : =  [K^{2} (1+ A^{2} + B^{2}) -1] \partial_{33} u_{i} 
- 2 AK \partial_{13} u_{i} -2 B K \partial_{23} u_{i},\\
G^{1,4}_{i} & := [-K^{3} (1+ A^{2} + B^{2}) \partial_{3} J
+ AK^{2} (\partial_{1} J + \partial_{3} A) + BK^{2 } (\partial_{2} J + \partial_{3} B) - K (\partial_{1} A + \partial_{2} B)  
]  \partial_{3} u_{i},\\
G^{1,5}_{i} & : = \partial_{t} \bar{\eta} (1+ x_{3}/ b) K \partial_{3} u_{i},
\end{split}
\end{equation}
\begin{equation}\label{nlin_G2}
G^{2} := AK \partial_{3} u_{1} + BK \partial_{3} u_{2} + (1-K) \partial_{3} u_{3}, 
\end{equation}
$G^3 =  G^{3,1}  + G^{3,2} + G^{3,3}+ G^{3,4}$, for 
\begin{equation}\label{nlin_G3}
\begin{split}
G^{3,1} : = &  \partial_{1} \eta 
\begin{pmatrix}
p - \eta - 2 (\partial_{1} u_{1} - A K \partial_{3} u_{1} )\\
- \partial_{2} u_{1} - \partial_{1} u_{2} + BK \partial_{3} u_{1} + AK \partial_{3 } u_{2} \\
- \partial_{1} u_{3} - K \partial_{3} u_{1} + AK \partial_{3} u_{3} 
\end{pmatrix}
\\
  + &  \partial_{2} \eta 
\begin{pmatrix}
  - \partial_{2} u_{1} - \partial_{1} u_{2} + BK \partial_{3} u_{1} + AK \partial_{3} u_{2} \\
  p - \eta -2 ( \partial_{2} u_{2} - BK \partial_{3} u_{2}) \\
  - \partial_{2} u_{3} - K \partial_{3} u_{2} + BK \partial_{3} u_{3} 
\end{pmatrix}
+ 
\begin{pmatrix}
    (K-1) \partial_{3} u_{1} + AK \partial_{3} u_{3} \\
   (K-1) \partial_{3} u_{2} + BK \partial_{3} u_{3} \\
   2(K-1) \partial_{3} u_{3}
\end{pmatrix}
,\\
  G^{3,2} :=& (   \sigma(  c_0 + c) -  \sigma(  c_0 )   ) \Delta_{*} \eta e_{3}
   + \sigma(  c_0 +c) (    H(\eta) - \Delta_{*} \eta)\mathcal{N} 
   + \sigma(  c_0 +c  ) \Delta_{*} \eta (    \mathcal{N} - e_{3}),\\
    G^{3,3} :=&(  \sqrt{1+ |\nabla_{*} \eta|^{2}}-1) \sigma^{\prime} (  c_0 + c ) \nabla_{  *} c + ( \sigma^{\prime} (  c_0  + c) - \sigma_{0}^{\prime} ) \nabla_{  *} c +\sqrt{1+ |\nabla_{*} \eta|^{2}} \sigma^{\prime}(c)( \nabla_\Gamma c -  \nabla_{*} c  ),\\
    G^{3,4} : = & \sigma^{\prime}( c_0 + c  ) \nu_{*} \cdot \nabla_{*} c e_{3},
\end{split}
\end{equation}
\begin{equation}\label{nlin_G4}
G^{4} := -\partial_{1} \eta u_{1} - \partial_{2} \eta u_{2}, 
\end{equation}
and
\begin{equation}\label{nlin_G5}
\begin{split}
G^{5} &: =  - u\cdot \nabla_{*} c -  c \ \diverge_\Gamma {u} 
+ \gamma [ \Delta_\Gamma c -\Delta_{*} c]  -   c_0 [ \diverge_\Gamma  {u} - \mathrm{div}_{*} {u} ].
\end{split}
\end{equation}

Next we consider the energy-dissipation evolution equation for solutions to problems of the form \eqref{ns_perturbed}.

\begin{prop}\label{ed_pert} Suppose $(v,q,\zeta,h)$ solve
\begin{equation}\label{eqtn_v}
\begin{cases}
\partial_t v  -\Delta  v   + \nabla  q =  \Phi^{1}& \text{in }
\Omega  \\
\mathrm{div}  \ {v}=\Phi^{2} & \text{in }\Omega  \\
(qI - \mathbb{D}v - \zeta I + \sigma_{0} \Delta \zeta ) e_{3} + \sigma^{\prime}_{0}  \nabla_{*} h=  \Phi^{3} & \text{on } \Sigma \\
\partial_t \zeta - v_{3}= \Phi^{4}&
\text{on } \Sigma \\
\partial_t {h}   +   c_0  \mathrm{div}_{*}v -\gamma \Delta_{*}h= \Phi^{5}  & \text{on }\Sigma \\
v = 0 & \text{on } \Sigma_{b}.%
\end{cases}
\end{equation}
Then 
\begin{multline}\label{ed_pert_0} 
\frac{d}{dt} \left(   \int_{\Omega} \frac{|v|^{2}}{2} + \int_{\Sigma} \frac{|\zeta|^{2}}{2} + \frac{\sigma_{0}}{2} |\nabla_{*} \zeta|^{2}+\frac{-\sigma^{\prime}_{0} }{ 2  c_0} |h|^{2}   \right) 
  + \int_{\Omega} \frac{| \mathbb{D} v| ^{2} }{2}  + \int_{\Sigma} \frac{-\gamma \sigma^{\prime}_{0}}{   c_0} |\nabla_{*} h|^{2}
  \\
= \int_{\Omega} v\cdot \Phi^{1} + q \Phi^{2} - v\cdot \nabla \Phi^{2} + \int_{\Sigma} - v \cdot \Phi^{3} + \zeta \Phi^{4} - \sigma_{0} \Phi^{4} \Delta_{*} \zeta- \frac{\sigma^{\prime}_{0}}{ c_0} \Phi^{5} h .
\end{multline}
\end{prop}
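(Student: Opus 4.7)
The approach mirrors that of Proposition \ref{ed_geometric}, but is simpler because all the differential operators in \eqref{eqtn_v} are constant-coefficient, so no Jacobian or $\mathcal{A}$-factors have to be tracked.

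First I would take the $L^2(\Omega)$ inner product of the first equation in \eqref{eqtn_v} with $v$. Writing $-\Delta v + \nabla q = \diverge(qI - \mathbb{D} v) + \nabla \Phi^2$ (this uses $\diverge v = \Phi^2$ together with the symmetry identity $(\mathbb{D} v)_{ij}\partial_j v_i = \tfrac{1}{2}|\mathbb{D} v|^2$), an integration by parts produces the dissipation $\tfrac{1}{2}\int_\Omega |\mathbb{D} v|^2$ in the bulk, the pressure--divergence term $-\int_\Omega q\Phi^2$, the extra inhomogeneity $\int_\Omega v\cdot \nabla \Phi^2$, and a boundary term $\int_\Sigma (qI-\mathbb{D} v)e_3 \cdot v$; the $\Sigma_b$ contribution vanishes since $v|_{\Sigma_b}=0$.

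Then I would substitute the third equation of \eqref{eqtn_v} into this boundary integral: $(qI-\mathbb{D} v)e_3 = \zeta e_3 - \sigma_0 \Delta_*\zeta\, e_3 - \sigma'_0 \nabla_* h + \Phi^3$. The first two pieces pair with $v_3$, which the fourth equation of \eqref{eqtn_v} lets me rewrite as $\dt\zeta - \Phi^4$; a further integration by parts on $\Sigma$ (with no boundary terms, by periodicity) then converts $-\sigma_0\int_\Sigma \Delta_*\zeta\, \dt\zeta$ into $\tfrac{d}{dt}\int_\Sigma \sigma_0 |\nabla_*\zeta|^2/2$. This accounts for every term in \eqref{ed_pert_0} except those involving $h$, and leaves behind a Marangoni coupling $-\sigma'_0\int_\Sigma \nabla_* h \cdot (v_1,v_2)$ along with an $\int_\Sigma \Phi^3\cdot v$ forcing and a $\Phi^4$ residual.

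To capture the $h$-energy and cancel the leftover coupling, I would multiply the fifth equation of \eqref{eqtn_v} by $-\sigma'_0 h/c_0$ and integrate over $\Sigma$. The choice of this weight is the only subtle point in the argument: the factor $-\sigma'_0 > 0$ is needed to make the resulting $h$-energy positive, while the factor $1/c_0$ is forced by the requirement that $-\sigma'_0 \int_\Sigma h\,\diverge_* v$, after integration by parts in $x_*$, produces exactly $+\sigma'_0\int_\Sigma \nabla_* h \cdot (v_1,v_2)$, cancelling the Marangoni interaction from the previous step. The diffusion term simultaneously produces the positive dissipation $-\gamma\sigma'_0/c_0\int_\Sigma |\nabla_* h|^2$. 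Adding the resulting identity to the one derived above and collecting terms yields \eqref{ed_pert_0}; no delicate estimates arise beyond this bookkeeping.
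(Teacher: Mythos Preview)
Your proposal is correct and follows essentially the same route as the paper's proof. The only cosmetic difference is that the paper, rather than multiplying the fifth equation by $-\sigma'_0 h/c_0$ and adding it as a separate identity, integrates the Marangoni boundary term $-\sigma'_0\int_\Sigma \nabla_*h\cdot v$ by parts to $\sigma'_0\int_\Sigma h\,\diverge_* v$ and then substitutes $\diverge_* v = c_0^{-1}(-\partial_t h + \gamma\Delta_* h + \Phi^5)$ directly from the fifth equation; this is the same computation rearranged.
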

\begin{proof}
From the first equation in \eqref{eqtn_v} we compute
\begin{equation*} 
\begin{split}
\partial_{t} v_{i} + (\mathrm{div} S(q,v))_{i}  = \partial_{t} v_{i} + \partial_{i} q - \Delta v_{i} - \partial_{i} \Phi^{2}  
 = \Phi_{i}^{1} - \partial_{i} \Phi^{2}.
\end{split}
\end{equation*}
By the usual energy estimates (see for instance Lemma 2.3 in \cite{GT2}) we may compute
\begin{equation*}
\frac{d}{dt} \int_{\Omega} \frac{|v|^{2}}{2} 
+ \int_{\Omega} \frac{|\mathbb{D}u|^{2}}{2}
+ \underbrace{\int_{\Sigma} v_{3} \zeta}_{I} + 
\underbrace{   \int_{\Sigma} - \sigma_{0} v_{3} \Delta_{*} \zeta }_{II}  + \underbrace{ \int_{\Sigma} - \sigma^{\prime}_{0}  v\cdot \nabla_{*} h }_{III}
= \int_{\Omega} v\cdot \Phi^{1}  +q \Phi^{2} - v\cdot \nabla \Phi^{3}. 
\end{equation*}

We compute $I$ by integrating by parts and using \eqref{eqtn_v}:
\begin{equation*}
I = \int_{\Sigma}  \zeta \partial_{t} \zeta  - \zeta \Phi^{4}  =
\frac{d}{dt} \int_{\Sigma} \frac{|\zeta|^{2}}{2} - \int_{\Sigma} \zeta \Phi^{4}.
\end{equation*}
Similarly, 
\begin{equation*}
II =  - \sigma_{0} \int_{\Sigma} \{ \partial_{t} \zeta - \Phi^{4}  \} \Delta_{*} \zeta 
=  \sigma_{0} \int_{\Sigma} \partial_{t} \nabla_{*} \zeta \cdot \nabla_{*} \zeta + \sigma_{0} \Phi^{4} \Delta_{*} \zeta 
= \sigma_{0}  \frac{d}{dt} \int_{\Sigma} \frac{|\nabla_{*} \zeta|^{2}}{2}
+ \sigma_{0} \int_{\Sigma} \Phi^{4 } \Delta_{*} \zeta.
\end{equation*}
Finally, for $III$ we compute
\begin{equation*}
\begin{split}
III = & \int_{\Sigma} \sigma^{\prime}_{0}  h \ \mathrm{div}_{*} v 
 =  \int_{\Sigma} \sigma^{\prime} _{0} \frac{h}{ c_0} \{  - \partial_{t}h + \gamma \Delta_{*} h + \Phi^{5} \}\\
=& \frac{d}{dt} \int_{\Sigma} \frac{-\sigma^{\prime} _{0}}{2 c_0}|h|^{2}
+ \int_{\Sigma} \frac{-\gamma \sigma^{\prime}_{0}}{ c_0} |\nabla_{*} h|^{2}
+ \int_{\Sigma} \frac{\sigma^{\prime}_{0}}{ c_0}   \Phi^{5}h.
\end{split}
\end{equation*}

The equality \eqref{ed_pert_0} then follows by combining the above computations.
\end{proof}

\section{Estimates of the nonlinearities  }\label{sec_nlin}

In this section we record estimates for the nonlinearities that appear in \eqref{ns_geometric_hybrid} and \eqref{ns_perturbed}.  Throughout this section we will repeatedly use the estimates of Lemmas \ref{p_poisson} and \ref{p_poisson_2} to estimate $\bar{\eta}$, as well as Lemma  \ref{product_ests} to estimate various nonlinearities.  For the sake of brevity we will use these lemmas without explicit reference.

\subsection{Useful $L^\infty$ estimates  }

We begin the section by recording the following result, which is useful for removing the appearance of $J$ and $\mathcal{A}$ factors.

\begin{lem}\label{infinity_bounds}
There exists a universal $0 < \delta < 1$ so that if $\ns{\eta}_{5/2} \le \delta$, then the following hold.
\begin{enumerate}
 \item We have the estimate
 \begin{equation*}\label{infb_01}
 \pnorm{J-1}{\infty}^2  +\pnorm{A}{\infty}^2 + \pnorm{B}{\infty}^2 \le \hal, \text{ and } 
  \pnorm{K}{\infty}^2+ \pnorm{\mathcal{A}}{\infty}^2 \ls 1.
\end{equation*}

 \item The map $\Theta$ defined by \eqref{mapping_def} is a diffeomorphism.
 \item There exists a universal constant $C>0$ such that for all $v \in H^1(\Omega)$ such that $v =0$ on $\Sigma_b$ we have that
\begin{equation*}\label{infb_02}
  \int_\Omega \abs{\sg v}^2 \le \int_\Omega J \abs{\sg_{\mathcal{A}} v}^2 + C \sqrt{\E}\int_\Omega \abs{\sg v}^2 . 
\end{equation*}
 \item We have the estimates
\begin{equation*}
 -\frac{c_0}{2} \le c \le \frac{c_0}{2} \text{ and }  \frac{c_0}{2} \le \tilde{c} \le \frac{3 c_0}{2}
\end{equation*}

\end{enumerate}

\end{lem}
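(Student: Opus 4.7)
The plan is to reduce all four assertions to a single quantitative statement: when $\|\eta\|_{H^{5/2}(\Sigma)}$ is small, the harmonic extension $\bar\eta$ is small in $H^3(\Omega)$ by Lemma \ref{p_poisson}, and Sobolev embedding $H^2(\Omega) \hookrightarrow L^\infty(\Omega)$ in three dimensions then gives pointwise smallness of $\bar\eta$ and its gradient. All four items follow from controlled perturbation around the identity change of variables $\Theta(x)=x$ that occurs when $\eta\equiv 0$.

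For (1), read off from the explicit formulas that $A$, $B$, and $J-1$ are sums of smooth bounded functions of $x_3/b$ multiplied by $\bar\eta$, $\partial_3 \bar\eta$, $\partial_1 \bar\eta$, or $\partial_2 \bar\eta$. The $L^\infty$ bound above therefore yields $\pnorm{A}{\infty}+\pnorm{B}{\infty}+\pnorm{J-1}{\infty} \ls \|\bar\eta\|_{H^3(\Omega)} \ls \|\eta\|_{H^{5/2}(\Sigma)}$, and picking $\delta$ small makes each piece no larger than $1/\sqrt 2$. Consequently $J \ge 1-1/\sqrt 2 >0$ pointwise, so $K=J^{-1}$ is bounded, and $\mathcal{A}$, whose entries are expressed in terms of $A,B,K$, is bounded as claimed. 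For (2), I exploit the special structure $\Theta(x_1,x_2,x_3) = (x_1,x_2,f(x))$ with $f(x)=x_3+\bar\eta(x)(1+x_3/b)$: horizontal coordinates are preserved, and on each vertical segment $\partial_3 f = J \ge 1/2$, so $f(x_1,x_2,\cdot)$ is strictly increasing and hence injective, with boundary values $f(\cdot,\cdot,-b)=-b$ (the factor $1+x_3/b$ vanishes) and $f(\cdot,\cdot,0)=\eta(x_1,x_2)$. Thus $\Theta$ is a $C^1$ bijection from $\Omega$ onto $\Omega(t)$ with nonvanishing Jacobian, hence a $C^1$ diffeomorphism.

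For (3), expand $|\nab v|^2 - J|\sga v|^2$ using the explicit form of $\mathcal{A}$. The result is a quadratic form in $\nab v$ whose coefficients are polynomials in $A$, $B$, $J-1$, $K-1$ that vanish identically when $\eta=0$. Pointwise,
\begin{equation*}
\bigl|\, |\nab v|^2 - J|\sga v|^2 \,\bigr| \ls \bigl(\pnorm{A}{\infty}+\pnorm{B}{\infty}+\pnorm{J-1}{\infty}+\pnorm{K-1}{\infty}\bigr)\,|\nab v|^2,
\end{equation*}
and the prefactor is bounded by $\|\bar\eta\|_{H^3(\Omega)} \ls \|\eta\|_{H^{5/2}(\Sigma)} \le \sqrt{\E}$ via the same argument as in (1) together with $\|\eta\|_{H^{5/2}} \le \|\eta\|_{H^3} \le \sqrt{\E}$. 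Integrating over $\Omega$ gives the desired inequality. For (4), since $\E$ is assumed small and $\|c\|_{H^2(\Sigma)}^2 \le \E$, the Sobolev embedding $H^2(\Sigma) \hookrightarrow L^\infty(\Sigma)$ (valid since $\Sigma$ is two-dimensional) gives $\pnorm{c}{\infty} \ls \sqrt{\E}$; further shrinking $\delta$ if necessary makes this at most $c_0/2$, and adding $c_0$ yields the bounds on $\tilde c$.

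The main obstacle is the global diffeomorphism claim in (2), since smallness alone only gives a local diffeomorphism via the inverse function theorem. I sidestep this by using the very specific triangular form of $\Theta$, which reduces global injectivity to the monotonicity of a scalar function on a segment and converts the problem into an elementary intermediate-value argument. The other three items are essentially bookkeeping once the harmonic-extension Sobolev bound on $\bar\eta$ is in hand.
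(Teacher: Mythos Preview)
Your proof is correct and follows the same route as the paper, which simply cites Lemma~2.4 and Proposition~4.3 of \cite{GT2} for items (1)--(3) and invokes Sobolev embedding together with \eqref{c_def} for item (4). One small slip: in item (3) the symbol $\sg v$ denotes the symmetric gradient $\mathbb{D}v$, not $\nab v$; your pointwise expansion produces an error bounded by $C\sqrt{\E}\int_\Omega|\nab v|^2$, so you still need Korn's inequality (valid here because $v|_{\Sigma_b}=0$) to convert this to $C\sqrt{\E}\int_\Omega|\mathbb{D}v|^2$ as the statement demands.
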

\begin{proof}
See Lemma 2.4 in \cite{GT2} for a proof of the first two items.  The proof of the third item can be found in the proof of Proposition 4.3 in \cite{GT2}.  The third item follows directly from the Sobolev embedding and the identity \eqref{c_def}.
\end{proof}

\subsection{Nonlinearities in \eqref{ns_geometric_hybrid}}

Our goal now is to estimate the nonlinear terms $F^i$ for $i=1,\dotsc,5$, as defined in \eqref{nlin_F1}--\eqref{nlin_F5}.  These estimates will be used principally to estimate the interaction terms on the right side of \eqref{ed_geometric_0}.

\begin{thm}\label{nlin_ests_F}
Let $F^1,\dotsc,F^5$ be as defined in  \eqref{nlin_F1}--\eqref{nlin_F5}.  Let $\E$ and $\D$ be as defined in \eqref{def_E} and \eqref{def_D}. Suppose that $\E \le \delta$, where $\delta \in (0,1)$ is the universal constant given in Lemma \ref{infinity_bounds}, and that $\D < \infty$.   Then  
\begin{equation}\label{nlin_ests_F_01}
\norm{F^{1} J}_{0}  +  \norm{F^{3}}_{\Sigma,0} +  \norm{F^{4 } }_{\Sigma,0}  \lesssim \sqrt{\mathcal{E}}\sqrt{\mathcal{D}},
\end{equation}
\begin{equation}\label{nlin_ests_F_02} 
\abs{\int_\Omega \p_{t} p  F^{2} J  - \frac{d}{dt} \int_\Omega p F^2 J } \ls \sqrt{\E}\D \text{ and } \abs{\int_\Omega p F^2 J } \ls \E^{3/2},
\end{equation}
and
\begin{equation}\label{nlin_ests_F_03}
\abs{ \int_\Omega \dt c F^5 } \ls \sqrt{\E} \D.
\end{equation}
\end{thm}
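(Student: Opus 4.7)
The plan is to decompose each nonlinearity $F^{i} = \sum_{k} F^{i,k}$ as in \eqref{nlin_F1}--\eqref{nlin_F5} and estimate each constituent product separately. In every such product we distinguish between (a) derivatives of the unknowns $(u, p, \eta, c)$, estimated directly via $\E$ and $\D$, and (b) auxiliary geometric quantities built from $\eta$ (namely $\bar\eta, A, B, J, K, \A, \mathcal{N}, H, \nu$), which we handle by converting boundary regularity of $\eta, \partial_{t}\eta, \partial_{t}^{2}\eta$ into bulk regularity of $\bar\eta$ and its time derivatives through the Poisson-extension estimates of Lemmas \ref{p_poisson}, \ref{p_poisson_2}, together with the product estimates of Lemma \ref{product_ests}. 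The smallness hypothesis $\E \le \delta$ together with Lemma \ref{infinity_bounds} ensures $J, K, \A$ are bounded in $L^{\infty}$ by universal constants. The overarching scheme is then the classical split: in each product place one factor in an $L^{\infty}$-type norm controlled by $\sqrt{\E}$ (via $H^{2}(\Omega) \hookrightarrow L^{\infty}$ and $H^{s}(\Sigma) \hookrightarrow L^{\infty}$ for $s > 1$) and the remaining factor in $L^{2}$ controlled by $\sqrt{\D}$.

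Turning to \eqref{nlin_ests_F_01}, each summand of $F^{1}$ has the schematic form $(\eta\text{-derived coefficient}) \times (\text{spatial derivative of } u \text{ or } p)$ with one or two time derivatives distributed among the factors. When a $\partial_{t}$ lands on the coefficient, its norm comes from $\partial_{t}\bar\eta$ via $\partial_{t}\eta \in H^{5/2}(\Sigma) \subset \D$ yielding $\sqrt{\D}$, paired with $\nabla u$ or $\nabla p$ from $\E$; when $\partial_{t}$ lands on $u$, the factor $\nabla \partial_{t}u$ (or $\partial_{t}u$) comes from $\D$ and is paired with a coefficient bounded in $L^{\infty}$ from $\E$. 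The doubly-time-differentiated term $F^{1,5}$ uses $\partial_{t}^{2}\bar\eta \in H^{1}(\Omega)$ (from $\partial_{t}^{2}\eta \in H^{1/2}(\Sigma) \subset \D$) paired against $\partial_{3}u$ from $\E$. The boundary terms $F^{3}$ and $F^{4}$ follow the same template on $\Sigma$ using trace theory for boundary values of $u, p$ and surface Sobolev embedding for $\eta, c$ and their derivatives; the Marangoni pieces in $F^{3,3}$ split naturally as (dissipation-controlled $\partial_{t}$ factor) times (energy-controlled Sobolev factor).

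Estimate \eqref{nlin_ests_F_02} requires an integration-by-parts in time to avoid needing $\partial_{t}p$ at a regularity beyond $\E \cup \D$:
\[
\int_{\Omega} \partial_{t}p\, F^{2}\, J \;-\; \frac{d}{dt}\int_{\Omega} p\, F^{2}\, J \;=\; -\int_{\Omega} p\, \partial_{t}F^{2}\, J \;-\; \int_{\Omega} p\, F^{2}\, \partial_{t}J.
\]
For the first remainder we expand $\partial_{t}F^{2} = -\partial_{t}^{2}\A \cdot \nabla u - \partial_{t}\A \cdot \nabla \partial_{t}u$. The first piece pairs $\partial_{t}^{2}\A \in L^{2}(\Omega)$ from $\D$ with $\nabla u \in L^{2}$ from $\E$ and $p \in L^{\infty}$ secured by $p \in H^{2}(\Omega) \subset \D$; the second piece pairs $\nabla \partial_{t}u \in L^{2}$ from $\D$ with $\partial_{t}\A \in L^{\infty}$ (coming from $\partial_{t}\eta \in H^{5/2}(\Sigma) \subset \D$ via Poisson and Sobolev) and $p \in L^{2}$ from $\E$. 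The $\partial_{t}J$ integral is analogous. Both produce $\sqrt{\E}\D$. The pointwise bound $\abs{\int p F^{2} J} \ls \E^{3/2}$ then follows by distributing Lebesgue exponents as $\norm{p}_{L^{6}}\norm{\partial_{t}\A}_{L^{6}}\norm{\nabla u}_{L^{6}}$, with each factor controlled purely at the energy level via $H^{1}(\Omega) \hookrightarrow L^{6}$.

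The most delicate estimate is \eqref{nlin_ests_F_03}, because $F^{5}$ is itself a time derivative of the nonlinear expression
\[
\Phi := -u\cdot \nabla_{*}c - c\,\diverge_{\Gamma}u + \gamma\bigl[\Delta_{\Gamma}c - \Delta_{*}c\bigr] - c_{0}\bigl[\diverge_{\Gamma}u - \diverge_{*}u\bigr].
\]
The structural point we will exploit is that the differences $\Delta_{\Gamma} - \Delta_{*}$ and $\diverge_{\Gamma} - \diverge_{*}$ have coefficients that vanish at $\eta = 0$, furnishing an extra $\sqrt{\E}$-small factor once we expand the surface operators using the formulas from Appendix \ref{app_surf}. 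Computing $F^{5} = \partial_{t}\Phi$ produces terms of two types: (i) a $\partial_{t}$ on a first-order piece (such as $\partial_{t}u \cdot \nabla_{*}c$, $u \cdot \nabla_{*}\partial_{t}c$, or analogous Marangoni pieces), each of which is estimated directly by trace theory, pairing a $\partial_{t}$ factor from $\D$, the partner $u$ or $c$ factor in $\E$, and, when available, an $\eta$-small factor; and (ii) a $\partial_{t}$ applied to the second-order surface terms, which is the main obstacle since it would naively require $\partial_{t}\nabla^{2}c$ or $\nabla^{2}\partial_{t}u$ at the boundary. We circumvent this by integrating by parts on $\Sigma$ against the test factor $\partial_{t}c$, moving one derivative to obtain $\nabla \partial_{t}c \in L^{2}(\Sigma) \subset \D$, paired with $\nabla c \in L^{\infty}(\Sigma)$ (from $c \in H^{2}(\Sigma) \subset \E$) or $\nabla u|_{\Sigma} \in L^{\infty}$ (from $u \in H^{3}(\Omega) \subset \D$) times a small $\eta$ or $\nabla \eta$ coefficient from $\E$. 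In every case the final count gives one $\sqrt{\E}$ factor and two $\sqrt{\D}$ factors, yielding the desired bound $\ls \sqrt{\E}\,\D$; verifying the $\eta$-linear structure underlying the $\sqrt{\E}$ factor in the $\Delta_{\Gamma}-\Delta_{*}$ and $\diverge_{\Gamma}-\diverge_{*}$ contributions is the main algebraic hurdle.
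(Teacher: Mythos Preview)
Your proposal is correct and follows essentially the same strategy as the paper: product estimates splitting one factor into $L^\infty$ (controlled by $\sqrt{\E}$) and the other into $L^2$ (controlled by $\sqrt{\D}$) for \eqref{nlin_ests_F_01}, the temporal integration-by-parts identity for \eqref{nlin_ests_F_02}, and spatial integration by parts on $\Sigma$ against $\partial_t c$ to handle the top-order pieces of $F^5$ in \eqref{nlin_ests_F_03}. The only cosmetic difference is in the choice of H\"older exponents for the bound $\abs{\int_\Omega p F^2 J} \ls \E^{3/2}$ (your $L^6 \times L^6 \times L^6$ versus the paper's $L^4 \times L^{4/3}$), both of which work on the bounded domain $\Omega$.
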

\begin{proof}
We divide the proof into several steps.  Throughout the lemma we will employ H\"older's inequality, Sobolev embeddings, trace theory, and Lemma \ref{infinity_bounds}.

\emph{Step 1:}   $F^{1},F^{3,1},$ and $F^{4}$ estimates

The estimate 
\begin{equation}\label{nlin_ests_F_1}
\norm{F^{1} J}_{0}  +  \norm{F^{3,1}}_{\Sigma,0} +  \norm{F^{4 } }_{\Sigma,0}  \lesssim \sqrt{\mathcal{E}}\sqrt{\mathcal{D}}
\end{equation}
is proved in  \cite{WTK}.

\emph{Step 2:} $F^{3,2}$ estimate

We bound the first term in $F^{3,2}$ via
\begin{eqnarray*}
 \| \sigma^{\prime}(\tilde{c}) \p_{t} c H \mathcal{N} \|_{\Sigma,0} 
 &\lesssim&
 \| \p_{t} c \|_{L^{4}(\Sigma)} \| \nabla_{*} ^{2} \eta \|_{L^{4}(\Sigma)} (1+ \| \nabla_{*} \eta \|_{L^{\infty}(\Sigma)})^{2} \\
 &\lesssim&   \| \p_{t}c \|_{\Sigma, 1} \| \eta \|_{\Sigma, 3} (1+ \|  \eta \|_{\Sigma, 2+})^{2} \\
 &\lesssim& \sqrt{\mathcal{D}}\sqrt{\mathcal{E}}(1+ \mathcal{E}).
\end{eqnarray*}

To bound the second term we first write 
\begin{equation*}
 H(\eta) = \Delta_\ast \eta +  \Delta_\ast \eta \left(\frac{1}{\sqrt{1+\abs{\nab_\ast \eta}^2}} -1\right) - \frac{\p_i \p_j \eta \p_i \eta \p_j \eta}{(1+\abs{\nab_\ast \eta}^2)^{3/2}}
\end{equation*}
in order to bound
\begin{eqnarray*}
&&\norm{ \dt (H(\eta) - \Delta_\ast \eta ) }_{L^{2}(\Sigma)}\\
&\lesssim& \big\{ \| \p_{t} \nabla_{*}^{2} \eta \|_{L^{2}(\Sigma)} \| \nabla_{*} \eta \|_{L^{\infty}(\Sigma)} 
+  \| \p_{t} \nabla_{*} \eta \|_{L^{\infty}(\Sigma)} \| \nabla_{*}^{2} \eta \|_{L^{2}(\Sigma)}
\big\}
\left( 1+ \norm{  \frac{\nabla_{*} \eta }{ [1+ |\nabla_{*} \eta|^{2}]^{3/2}} }_{L^{\infty}(\Sigma)} \right)\\
&\lesssim& \{ \sqrt{\mathcal{D}} \sqrt{\mathcal{E}} + \sqrt{\mathcal{D}} \sqrt{\mathcal{E}}    \} (1+ \sqrt{\mathcal{E}}).
\end{eqnarray*}
Next we employ the simple identity 
\begin{equation*}
 \sigma(\tilde{c}) - \sigma_{0} = \int^{\tilde{c}}_{c_0} \sigma^{\prime}(s) \dd s
\end{equation*}
to estimate
\begin{equation*}
 \norm{\sigma(\tilde{c}) - \sigma_{0}}_{L^\infty(\Sigma)} \ls \norm{\sigma}_{C^1} \norm{c_0}_{L^\infty(\Sigma)}.
\end{equation*}
Combining these, we deduce that 
\begin{eqnarray*}
&&\| (\sigma(\tilde{c}) - \sigma_{0} ) \p_{t} H \mathcal{N} \|_{\Sigma,0} \\
&\lesssim& \| \sigma(\tilde{c}) - \sigma_{0} \|_{L^{\infty}(\Sigma)}  \| \p_{t} H \|_{L^{2}(\Sigma)} \| \mathcal{N} \|_{L^{\infty}(\Sigma)}\\
&\lesssim&  \| \sigma \|_{C^{1}} \| c_0 \|_{\Sigma, 2}  [\norm{\dt \eta}_{\Sigma,2}+  \sqrt{\E \D}(1+\sqrt{\E})]
\\
&\lesssim& \sqrt{\mathcal{E}}\sqrt{\mathcal{D}}.
\end{eqnarray*}

Similarly, we bound the third term in $F^{3,2}$ via
 \begin{eqnarray*}
&& \| \sigma_{0} (\p_{t} H - \p_{t} \Delta_{*} \eta ) \mathcal{N} \|_{\Sigma, 0}\\
&\lesssim& \| \p_{t} H - \p_{t} \Delta_{*} \eta  \|_{L^{2}(\Sigma)} \| \mathcal{N } \|_{L^{\infty}(\Sigma)}\\
 &\lesssim& \{ \sqrt{\mathcal{D}} \sqrt{\mathcal{E}} + \sqrt{\mathcal{D}} \sqrt{\mathcal{E}}    \} (1+ \sqrt{\mathcal{E}}) \sqrt{\mathcal{E}},
 \end{eqnarray*}
and we bound the fourth by
\begin{eqnarray*}
 &&\| \sigma(\tilde{c}) H \p_{t} \mathcal{N} \|_{L^{2}(\Sigma)}\\
 &\lesssim& \| \sigma \|_{L^{\infty}(\Sigma)} \|\p_{t}  \nabla_{*} \eta\|_{L^{\infty}(\Sigma)} \| H \|_{L^{2}(\Sigma)}  \\
 &\lesssim& \| \p_{t} \eta \|_{ H^{2+}(\Sigma) }  \| \eta \|_{H^{2}(\Sigma)}(1+ \| \nabla_{*} \eta \|_{L^{\infty}(\Sigma)})^{2} \\
 &\lesssim& \sqrt{\mathcal{D}} \sqrt{\mathcal{E}} (1+ \sqrt{\mathcal{E}})^{2}.
\end{eqnarray*}

Combining the above estimates and again using the fact that $\E \le 1$, we deduce that 
\begin{equation}\label{nlin_ests_F_2}
 \norm{F^{3,2}}_{\Sigma,0} \ls \sqrt{\E} \sqrt{\D}.
\end{equation}

\emph{Step 3:} $F^{3,3}$ estimate

According to the usual Sobolev embedding $H^{1+}(\Sigma) \hookrightarrow L^{\infty}(\Sigma)$, we may estimate
\begin{eqnarray*}
&&\| F^{3,3} \|_{\Sigma, 0}\\
 &\lesssim& \| \nabla_{*} \eta \|_{L^{\infty}(\Sigma)} \| \nabla_{*} \p_{t} \eta \|_{L^{2}(\Sigma)} \| \nabla_{*} ( c_0 ) \|_{L^{\infty}(\Sigma)}\\
&& + (1 + \| \nabla_{*} \eta \|_{L^{\infty}(\Sigma)})  \| \p_{t} \tilde{c} \|_{L^{2}(\Sigma)} \| \nabla_{*} ( c_0 ) \|_{L^{\infty}(\Sigma)}\\
&&+ (1+ \| \nabla_{*} \eta \|_{L^{\infty}(\Sigma)})^{3}
\| \sigma \|_{C^{1}} \| \nabla_{*} \p_{t} \tilde{c} \|_{L^{2}(\Sigma)}\\
&&+
(1+ \| \nabla_{*} \eta \|_{L^{\infty}(\Sigma)})^{2} \| \sigma \|_{C^{1}}
\| \p_{t} \nabla_{*} \eta \|_{L^{\infty}(\Sigma)} \| \nabla_{*} (\tilde{c}
- c_0 )\|_{L^{2}(\Sigma)}
\\
&\lesssim& (1+ \sqrt{\mathcal{E}}) \sqrt{\mathcal{E}}\sqrt{\mathcal{D}}.
\end{eqnarray*}
Again since $\E \le 1$ we find that 
\begin{equation}\label{nlin_ests_F_3}
  \norm{F^{3,3}}_{\Sigma,0} \ls \sqrt{\E} \sqrt{\D}.
\end{equation}
Then by combining \eqref{nlin_ests_F_1}, \eqref{nlin_ests_F_2}, and \eqref{nlin_ests_F_3} we deduce that \eqref{nlin_ests_F_01} holds.

\emph{Step 4:} $F^2$ estimate

We have that 
\begin{equation*}
\int_\Omega \dt p J F^2 = \frac{d}{dt} \int_{\Omega} p J F^2 - \int_\Omega p (\dt J F^2 + J \dt F^2).
\end{equation*}
We may then use the definition of $F^2$ and $J$ to  estimate 
\begin{eqnarray*}
&& \abs{\int_\Omega p (\dt J F^2 + J \dt F^2)  }\\
&\lesssim&   \| p \p_{t}^{2} \nabla_{*} \bar{\eta}  \nabla u (1 + |\bar{\eta}| + |\nabla \bar{\eta}|) \|_{L^{1}(\Omega)}
+  \|p \p_{t} \nabla_{*} \bar{\eta}  \nabla u (|\p_{t} \bar{\eta}| + |\p_{t} \nabla\bar{\eta}|)   \|_{L^{1}(\Omega)}\\
&&+  \|  p   \nabla \bar{\eta} \p_{t} \nabla u (|\p_{t} \bar{\eta}| + | \p_{t} \nabla \bar{\eta}|) \|_{L^{1}(\Omega)}\\
&\lesssim&  \| p \|_{L^{\infty}(\Omega)}\| \p_{t}^{2} \nabla \bar{\eta} \|_{L^{2}(\Omega)} \| \nabla u \|_{L^{4}(\Omega)} \| \nabla \bar{\eta} \|_{L^{4}(\Omega)}\\
&&+  \| p \|_{L^{\infty}(\Omega)} \| \p_{t}  \bar{\eta} \|_{ H^{1}(\Omega)} \big\{ \| \p_{t}  \nabla \bar{\eta} \|_{ L^{4}(\Omega)} \| \nabla u \|_{L^{4}(\Omega)} 
+ \| \nabla \bar{\eta} \|_{L^{\infty}(\Omega)} \|  \p_{t}\nabla   u  \|_{L^{2}(\Omega)}
 \big\}
\\
&\lesssim&
 \| p \|_{2} \| u \|_{2} \big\{ \|
\p_{t}^{2} {\eta} \|_{ \Sigma, \frac{1}{2}} \| \eta \|_{\Sigma, \frac{3}{2}}
+ \| \p_{t}  {\eta} \|_{\Sigma, \frac{1}{2}} \| \p_{t}  {\eta} \|_{\Sigma, \frac{3}{2}} 
\big\}
+   \| p \|_{2} \| \p_{t } \eta \|_{\Sigma, \frac{1}{2}}\| \eta \|_{\Sigma,3}
\| \p_{t} u \|_{1}
\\
&\lesssim&   \sqrt{\mathcal{E}}   \sqrt{\mathcal{E}} {\mathcal{D}},
\end{eqnarray*}
where we have used the embeddings $H^{1}(\Omega) \hookrightarrow L^{4}(\Omega)$ and $H^{3/2+}(\Omega)\hookrightarrow L^{\infty}(\Omega)$.

Similarly,
\begin{eqnarray*}
&&\abs{\int_\Omega p J F^2 }\\
&\lesssim& \| p \|_{L^{4}(\Omega)} \|F^{2} \|_{L^{4/3}(\Omega)} \| J \|_{L^{\infty}(\Omega)}\\
&\lesssim&  \|p \|_{H^{1}(\Omega) }   
\| \nabla u \|_{L^{4}(\Omega)}
 \|  (1+ |\nabla \bar{\eta} |)
 ( |\p_{t}  \bar{\eta}|+ |\p_{t} \nabla \bar{\eta}| ) 
  \|_{L^{2}(\Omega)} 
\| \bar{\eta} \|_{H^{3/2+}(\Omega)}\\
&\lesssim&  \|p \|_{1}
\|  u \|_{2}
(1+\| \eta \|_{\Sigma, 3} )\| \p_{t} \eta \|_{\Sigma,1 }
(1+\|  {\eta} \|_{\Sigma,3})\\
&\lesssim& \mathcal{E}^{3/2} .
\end{eqnarray*} 

By combining the above estimates we then deduce that \eqref{nlin_ests_F_02} holds.

\emph{Step 5:} $F^5$ estimate

To begin we note that 
\begin{eqnarray*}
F^{5} & : =&
- \p_{t}u\cdot \nabla_{*} c - u\cdot \nabla_{*} \p_{t}c
- c \diverge_\Gamma \p_{t} u 
+ [     c \diverge_\Gamma \p_{t} u  - \p_{t} (c \diverge_\Gamma u)]\\
&&+ 
 \gamma \p_{t} [ \Delta_{\Gamma }   c -\Delta_{*} c] 
+ c_{0} \p_{t} [ \diverge_\Gamma  {u} - \mathrm{div}_{*}  {u} ] .
\end{eqnarray*}
We will handle each of these terms in turn.

For the first two we use  trace theory and the Sobolev embedding to estimate
\begin{eqnarray*}
&&\| \p_{t} u \cdot \nabla_{*} c\|_{\Sigma, 0} + \| u \cdot \nabla_{*} \p_{t} c \|_{\Sigma, 0}\\
&\lesssim& \| \p_{t} u \|_{L^{4}(\Sigma)}\|  \nabla_{*} c \|_{L^{4}(\Sigma)}
+ \| u \|_{L^{\infty}(\Sigma)} \| \nabla_{*} \p_{t} c \|_{L^{2}(\Sigma) }\\
&\lesssim& \| \p_{t} u \|_{\Sigma, \frac{1}{2}} \| c \|_{\Sigma, \frac{3}{2}} + \| u \|_{ \frac{3}{2}+} \| \p_{t} c\|_{\Sigma, 1}\\
&\lesssim& \sqrt{\mathcal{E}} \sqrt{\mathcal{D}}.
\end{eqnarray*}
Thus 
\begin{equation*}
\abs{ (- \p_{t}u\cdot \nabla_{*} c - u\cdot \nabla_{*} \p_{t}c, \dt c)_{H^0(\Sigma)} } \ls \sqrt{\E} \sqrt{\D} \norm{\dt c}_{\Sigma,0} \ls \sqrt{\E} \D.
\end{equation*}

For the third term we integrate by part to see that
\begin{eqnarray*}
&&\big(-c \diverge_\Gamma \p_{t} u , \p_{t} c  \big)_{H^{0}(\Sigma)}\\
&=&  \big( \p_{t} u_{i}, \p_{i}[ c \p_{t} c]\big)_{H^{0}(\Sigma)}
+ \big( \p_{t} u_{i}, \p_{j} [ c \p_{t} c \nu_{i} \nu_{j}] \big)_{H^{0}(\Sigma)}
 + \big( \p_{t} u_{3}, \p_{i} [ \nu_{i} \nu_{3} c \p_{t} c]  \big)_{H^{0}(\Sigma)}.
\end{eqnarray*}
Therefore
\begin{eqnarray*}
&&\abs{ \big( - c \diverge_\Gamma \p_{t} u , \p_{t} c  \big)_{H^{0}(\Sigma)}   }  \\
&\lesssim& \|\p_{t} u \|_{\Sigma, 0} \| \nabla_{*} c\|_{\infty} \| \p_{t} c \|_{\Sigma, 0}
+  \|\p_{t} u \|_{\Sigma, 0}  \|  c\|_{\infty} \| \nabla_{*}\p_{t} c \|_{\Sigma, 0} + \| \p_{t} u  \|_{\Sigma,0} \| c \|_{\infty} \| \p_{t} c \|_{\Sigma,0} \| \nabla_{*}^{2} \eta \|_{\infty}\\
&\lesssim&
\| \p_{t} u \|_{\frac{1}{2}+} \| c \|_{\Sigma, \frac{5}{2}} \| \p_{t }c \|_{\Sigma, 0}
+ \| \p_{t} u \|_{\frac{1}{2}+ } \|  c\|_{\Sigma, \frac{3}{2}} \| \p_{t}c \|_{\Sigma, 1}
+ \| \p_{t} u \|_{\frac{1}{2}+} \| c \|_{2} \| \p_{t} c \|_{\Sigma, 0} \| \eta \|_{\Sigma, \frac{7}{2}}
\\
&\lesssim& \sqrt{\mathcal{E}} \mathcal{D}.
\end{eqnarray*}

For the fourth term we have 
\begin{eqnarray*}
&&\|  c \diverge_\Gamma \p_{t} u  - \p_{t} (c \diverge_\Gamma u) \|_{\Sigma,0}=\| \p_{t}c \diverge_\Gamma u + \p_{t} \nu \cdot (\nu_{*} \cdot \nabla_{*}) u  + \nu \cdot (\p_{t}\nu_{*} \cdot \nabla_{*}) u    \|_{\Sigma, 0}\\
&\lesssim& \| \p_{t} c \|_{L^{4}(\Sigma)} \| \nabla_{*} u \|_{L^{4}(\Sigma)}
+ \|  \p_{t} \nabla_{*} \eta \|_{L^{4}(\Sigma)} \| \nabla_{*} u \|_{L^{4}(\Sigma)} \\
&\lesssim& \{ \| \p_{t} c \|_{\Sigma,1} + \| \p_{t} \eta \|_{\Sigma, \frac{3}{2}} \} \| u \|_{2}  \lesssim \sqrt{\mathcal{E}} \sqrt{\mathcal{D}}.
\end{eqnarray*}
Hence
\begin{equation*}
\abs{ \big( c \diverge_\Gamma \p_{t} u  - \p_{t} (c \diverge_\Gamma u) , \dt c \big)_{H^0(\Sigma)} } \ls \sqrt{\E} \sqrt{\D} \norm{\dt c}_{\Sigma,0} \ls \sqrt{\E} \D.
\end{equation*}

Now we consider the fifth term.  Direct computation reveals that
\begin{eqnarray*}
\Delta_\Gamma c = \Delta_{*}c - \nu_{3}  \nu_{*} \cdot \nabla_{*} \p_{j} \eta (\delta_{ij} - \nu_{i} \nu_{j}) \p_{i} c 
 -  [\nu_{j}\p_{j} \nu_{i}  + \nu_{i} \p_{j} \nu_{j} ] \p_{i} c.
\end{eqnarray*}
Therefore 
\begin{eqnarray*}
&&\| \gamma \partial_{t}[ \Delta_\Gamma c - \Delta_{*} c] \|_{\Sigma,0}\\
&\lesssim&     \| \nabla_{*}^{2} \partial_{t} \eta  \nabla_{*}c \|_{\Sigma,0} +      \|  \nabla_{*} \partial_{t} \eta \nabla_{*}^{2} \eta   \nabla_{*}c \|_{\Sigma,0} + 
 \|  \nabla_{*}^{2}\eta  \nabla_{*} \partial_{t} c \|_{\Sigma, 0}\\
  &\lesssim&  \| \nabla_{*}^{2} \partial_{t} \eta\|_{L^{4}(\Sigma)} \| \nabla_{*}c\|_{L^{4}(\Sigma)} \|\partial_{t}c\|_{L^{2}(\Sigma)}
  + \| \nabla_{*} \p_{t} \eta \|_{\infty} \| \nabla_{*} ^{2} \eta \|_{L^{2}(\Sigma) } \| \nabla_{*} c\|_{\infty} \| \p_{t} c \|_{L^{2}(\Sigma)}\\
  &&+ \| \nabla_{*}^{2} \eta \|_{\infty} \| \nabla_{*} \p_{t} c \|_{L^{2}(\Sigma)}
  \| \p_{t} c \|_{L^{2}(\Sigma)} 
  \\
 &\lesssim& \| \p_{t} \eta \|_{\Sigma, \frac{5}{2}} \| c \|_{\Sigma, \frac{3}{2}} \| \p_{t} c \|_{\Sigma, 0}
 + \| \p_{t} \eta \|_{\Sigma, \frac{5}{2}} \| \eta \|_{\Sigma,2} \| c \|_{\Sigma, \frac{5}{2}} \| \p_{t} c \|_{\Sigma, 0}
 + \| \eta \|_{\Sigma, \frac{7}{2}} \| \p_{t} c \|_{\Sigma, 1} \| \p_{t} c \|_{\Sigma, 0}\\
 &\lesssim& \sqrt{\mathcal{E}} \mathcal{D},
\end{eqnarray*}
where we have used the Holder inequality $( \frac{1}{4} + \frac{1}{4} + \frac{1}{2} =1)$ and the Sobolev embeddings $H^{\frac{1}{2}}(\Sigma)\hookrightarrow  L^{4}(\Sigma )$ and $H^{1+}(\Sigma) \hookrightarrow L^{\infty}(\Sigma ) $. Hence
\begin{equation*}
 \abs{ (\gamma\partial_{t}( \Delta_\Gamma c - \Delta_{*} c ), \partial_{t}c  )_{H^{0}(\Sigma)} }  \ls \sqrt{\E} \D \norm{\dt c}_{\Sigma,0} \ls \E \D \ls \sqrt{\E} \D.
\end{equation*}

For the sixth term we note that 
\begin{equation*}
\diverge_\Gamma u - \text{div}_{*} u = - \nu_{i} (\nu_{*} \cdot \nabla_{*})u_{i}. 
\end{equation*}
Therefore
\begin{eqnarray*}
&&\big(c_{0} \p_{t} [ \diverge_\Gamma  {u} - \mathrm{div}_{*}  {u} ] , \p_{t} c\big)_{H^{0}(\Sigma)}\\
&=& -  \big( \nu_{i} ( \nu_{*} \cdot \nabla_{*}) \p_{t} u_{i}, \p_{t}c\big)_{H^{0}(\Sigma)} -  \big(   \p_{t}\nu_{i} ( \nu_{*} \cdot \nabla_{*}) u_{i} +   \nu_{i} (  \p_{t}\nu_{*} \cdot \nabla_{*}) u_{i}, \p_{t}c\big)_{H^{0}(\Sigma)}.  
\end{eqnarray*}
Clearly 
\begin{eqnarray*}
 &&\abs{ \big(   \p_{t}\nu_{i} ( \nu_{*} \cdot \nabla_{*}) u_{i} +   \nu_{i} (  \p_{t}\nu_{*} \cdot \nabla_{*}) u_{i}, \p_{t}c\big)_{H^{0}(\Sigma)}} \\
 &\ls& \| \p_{t} \nabla_{*} \eta \|_{L^{4}(\Sigma)} \| \nabla_{*} u\|_{L^{2}(\Sigma)} \| \p_{t} c\|_{L^{4}(\Sigma)} \lesssim \| \p_{t} \eta \|_{\Sigma, 2} \| u \|_{2} \| \p_{t } c\|_{\Sigma, 1} \lesssim \sqrt{\mathcal{E} } \mathcal{D}.
\end{eqnarray*}
On the other hand, we may integrate by parts to obtain
\begin{eqnarray*}
 \abs{-  \big( \nu_{i} ( \nu_{*} \cdot \nabla_{*}) \p_{t} u_{i}, \p_{t}c\big)_{H^{0}(\Sigma)}  }
 &=& \abs{ \big( \p_{j}\nu_{i}  \nu_{j}  \p_{t} u_{i}, \p_{t}c\big)_{H^{0}(\Sigma)} + \big( \nu_{i} \p_{j} \nu_{j}  \p_{t} u_{i}, \p_{t}c\big)_{H^{0}(\Sigma)} }\\
 &\lesssim& \| \nabla_{*}^{2} \eta \|_{L^{\infty}(\Sigma)} \| \p_{t} u \|_{L^{2}(\Sigma)}  \| \p_{t} c \|_{L^{2}(\Sigma)}  \lesssim \| \eta \|_{\Sigma, \frac{7}{2}}
 \| \p_{t} u \|_{1} \| \p_{t} c\|_{\Sigma,0}\\
 &\lesssim& \sqrt{\mathcal{E}}\mathcal{D}.
\end{eqnarray*}
Thus
\begin{equation*}
 \abs{ \big(c_{0} \p_{t} [ \diverge_\Gamma  {u} - \mathrm{div}_{*}  {u} ] , \p_{t} c\big)_{H^{0}(\Sigma)} } \ls \sqrt{\E} \D.
\end{equation*}

We have now estimated all of the terms appearing in $F^5$, and we conclude that \eqref{nlin_ests_F_03} holds.

\end{proof}

\subsection{Nonlinearities in \eqref{ns_perturbed}}

Now we turn our attention to the nonlinear terms $G^i$ for $i=1,\dotsc,5$, as defined in \eqref{nlin_G1}--\eqref{nlin_G5}.

\begin{thm}\label{nlin_ests_G}
Let $G^1,\dotsc,G^5$ be as defined in  \eqref{nlin_G1}--\eqref{nlin_G5}.  Let $\E$ and $\D$ be as defined in \eqref{def_E} and \eqref{def_D}. Suppose that $\E \le \delta$, where $\delta \in (0,1)$ is the universal constant given in Lemma \ref{infinity_bounds}, and that $\D < \infty$.   Then  
\begin{equation}\label{nlin_ests_G_01}
\| G^{1} \|_{1}  + \| G^{2} \|_{2} + \| G^{3} \|_{\Sigma, \frac{3}{2}} + \| G^{4} \|_{\Sigma, \frac{5}{2}} + \| \dt G^{4} \|_{\Sigma, \frac{1}{2}} + \| G^{5} \|_{\Sigma, 1}   \lesssim  \sqrt{\mathcal{E}} \sqrt{\mathcal{D}},  
\end{equation}
and
\begin{equation}\label{nlin_ests_G_02}
\| G^1 \|_{0}+ \| G^{2} \|_{1} +  \| G^{3} \|_{\Sigma, \frac{1}{2}} + \| G^{4} \|_{\Sigma, \frac{3}{2}} + 
\lesssim \E.
\end{equation}
\end{thm}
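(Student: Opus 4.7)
The plan is to prove Theorem \ref{nlin_ests_G} by handling each of the five nonlinearities term by term, following exactly the template established in Theorem \ref{nlin_ests_F}. For every product appearing in the $G^{i}$, the idea is to split the factors so that one ``higher-regularity'' coefficient (depending on $\bar\eta$, $\dt\bar\eta$, or $c$) is placed in an $L^{\infty}$ or Sobolev-algebra norm controlled by $\sqrt{\E}$, while the remaining factors are estimated in the stronger Sobolev norms corresponding to $\D$ (for \eqref{nlin_ests_G_01}) or to $\E$ (for \eqref{nlin_ests_G_02}). The Poisson-extension estimates of Lemmas \ref{p_poisson}, \ref{p_poisson_2} will translate norms of $\bar\eta$ into traces of $\eta$ on $\Sigma$, and the smallness condition $\E\le\delta$ provided by Lemma \ref{infinity_bounds} ensures that $J$, $K=J^{-1}$, and $\sigma,\sigma',\sigma''$ evaluated at $c_{0}+c$ remain uniformly controlled.

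For the bulk pieces, $G^{1,1}$, $G^{1,3}$, $G^{1,4}$, and $G^{1,5}$ all have the schematic form (small coefficient built from $\bar\eta$ or $\dt\bar\eta$) times (first or second spatial derivative of $u$ or $p$). Each is estimated in $H^{1}(\Omega)$ by the product estimate, the small factor contributing $\sqrt{\E}$ through $\|\bar\eta\|_{H^{5/2+}(\Omega)}$ or $\|\dt\bar\eta\|_{H^{3/2+}(\Omega)}$, and the derivative factor contributing $\sqrt{\D}$ through $\|u\|_{3}$, $\|\dt u\|_{1}$, or $\|p\|_{2}$. The quadratic $G^{1,2}$ is handled by embedding $u\mathcal{A}$ into $L^{\infty}$ using $\E$ and $\partial u$ into $H^{1}$ using $\D$. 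The term $G^{2}$ requires $H^{2}(\Omega)$ control, obtained from $\|A\|_{2}+\|B\|_{2}+\|1-K\|_{2}\ls \|\eta\|_{\Sigma,5/2}$ (via Poisson extension and a Moser-type composition for $K=J^{-1}$) combined with $\partial_{3}u\in H^{2}(\Omega)\subset\D$. The surface piece $G^{4}$ and its $\dt$ derivative are simple bilinear products of $\nabla_{*}\eta$ with the trace of $u$ (or $\dt u$), estimated by combining $H^{7/2}(\Sigma)$ or $H^{5/2}(\Sigma)$ bounds on $\eta$ with trace inequalities.

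The main obstacle will be $G^{3}$ at $H^{3/2}(\Sigma)$. After using trace theory to bring bulk factors to the surface, $G^{3,1}$ becomes a quadratic product of $\nabla_{*}\eta$ with traces of $\sg u$ and $p$; these can be distributed between $\E$-controlled ($H^{5/2+}(\Sigma)$) and $\D$-controlled ($H^{3/2}(\Sigma)$) norms. The composition pieces $G^{3,2}$, $G^{3,3}$, $G^{3,4}$ will be handled by writing
\begin{equation*}
\sigma(c_{0}+c)-\sigma_{0}=c\int_{0}^{1}\sigma'(c_{0}+sc)\,\dd s,
\end{equation*}
and analogously for $\sigma'$ and $\sigma''$, to extract a small factor of $c$, then invoking the $C^{3}$ regularity of $\sigma$ from \eqref{sigma_assume} together with Moser-type composition estimates on $H^{3/2}(\Sigma)$. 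The mean-curvature nonlinearity $H(\eta)-\Delta_{*}\eta$, already expanded in Step 2 of the proof of Theorem \ref{nlin_ests_F}, is a polynomial of degree at least two in $\nabla_{*}\eta$ and $\nabla_{*}^{2}\eta$ and fits into this framework. The estimate on $G^{5}$ at $H^{1}(\Sigma)$ is structurally identical to the $F^{5}$ estimate of Theorem \ref{nlin_ests_F} with the outer $\dt$ removed, so it reduces to the same collection of bilinear bounds between quantities in $\E$ and $\D$ on $\Sigma$. Finally, \eqref{nlin_ests_G_02} follows by running the identical decompositions one derivative lower, which permits both factors of every product to be placed in $\E$-norms, yielding $\E$ in place of $\sqrt{\E}\sqrt{\D}$.
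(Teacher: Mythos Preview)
Your proposal is correct and follows essentially the same approach as the paper: term-by-term product and composition estimates, with one factor placed in an $\E$-controlled algebra norm and the other in a $\D$- (or $\E$-)controlled Sobolev norm, using the Poisson-extension lemmas for $\bar\eta$ and the $C^3$ regularity of $\sigma$ for the composition pieces. The only cosmetic difference is that the paper delegates the $G^1$, $G^2$, $G^{3,1}$, $G^4$, and $\dt G^4$ bounds to the reference \cite{WTK} rather than writing them out, and for $G^5$ it gives direct $H^1(\Sigma)$ norm estimates rather than invoking the $F^5$ pairing argument (note $F^5=\dt G^5$, but the $F^5$ bound in Theorem \ref{nlin_ests_F} is a bilinear pairing estimate with an integration by parts, whereas here one needs a straight norm bound, which is in fact simpler since no $\dt$ lands on $\diverge_\Gamma u$).
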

\begin{proof}
We again divide the proof into several steps.  Throughout the lemma we will employ H\"older's inequality, Sobolev embeddings, trace theory, and Lemma \ref{infinity_bounds}.

\emph{Step 1:}  $G^1, G^2, G^{3,1},$ and $G^4$ estimates 
 
The estimates
\begin{equation*}
 \norm{G^1}_1 + \norm{G^2}_2 + \norm{G^{3,1}}_{\Sigma,3/2} + \norm{G^4}_{\Sigma,5/2} + \| \dt G^{4} \|_{\Sigma, \frac{1}{2}} \ls \sqrt{\E} \sqrt{\D}
\end{equation*}
and 
\begin{equation*}
\norm{G^1}_0 + \norm{G^2}_1 + \norm{G^{3,1} }_{\Sigma,1/2} + \norm{G^4}_{\Sigma,3/2} \ls \E 
\end{equation*}
are proved in \cite{WTK}.  Thus in order to prove \eqref{nlin_ests_G_01} and \eqref{nlin_ests_G_02} it suffices to prove 
\begin{equation}\label{nlin_ests_G_1}
 \norm{G^{3,2}}_{\Sigma,3/2} + \norm{G^{3,3}}_{\Sigma,3/2} + \norm{G^{3,4}}_{\Sigma,3/2}+ \norm{G^5}_{\Sigma,1} \ls \sqrt{\E} \sqrt{\D}  
\end{equation}
and 
\begin{equation}\label{nlin_ests_G_2}
 \norm{G^{3,2}}_{\Sigma,1/2} + \norm{G^{3,3}}_{\Sigma,1/2} + \norm{G^{3,4}}_{\Sigma,1/2}  \ls \E.   
\end{equation}

\emph{Step 2:}  $G^{3,2}$ estimates
 
Our goal now is to prove the $G^{3,2}$ estimates in \eqref{nlin_ests_G_1} and \eqref{nlin_ests_G_2}.  We will estimate each of the three terms in $G^{3,2}$ separately.  

We begin by writing  
\begin{equation*}
\sigma( c_0 + c) - \sigma( c_0) = \int^{c}_{0} \sigma^{\prime} ( c_0 + s) \dd s. 
\end{equation*}
We then use the estimate \eqref{i_s_p_01} with $r= s_{1} =s_{2} = \frac{3}{2}$ to bound
\begin{eqnarray*}
& & \|  (   \sigma( c_0 + c) -  \sigma( c_0)   ) \Delta_{*} \eta   \|_{ \Sigma, \frac{3}{2}  }  \lesssim \big{\|}\int^{c}_{0} \sigma^{\prime} ( c_0 + s) \dd s  \big{\|}_{\Sigma, \frac{3}{2}}
\| \Delta_{*} \eta \|_{\Sigma, \frac{3}{2}} \\
&\lesssim& \| \sigma \|_{C^{3}} \|  c \|_{\Sigma, \frac{3}{2}} \| \eta \|_{\Sigma, \frac{7}{2}}\\
&   \lesssim& \sqrt{\mathcal{E}} \sqrt{\mathcal{D}}.
\end{eqnarray*}
Similarly, \ref{i_s_p_02} with $r= s_{1} = \frac{1}{2}$ and $ s_{2} = \frac{3}{2}+$ provides the estimate
\begin{eqnarray*}
& & \|  (   \sigma( c_0 + c) -  \sigma( c_0)   ) \Delta_{*} \eta   \|_{\Sigma, \frac{1}{2} }  \lesssim \big{\|}\int^{c}_{0} \sigma^{\prime} ( c_0 + s) \dd s  \big{\|}_{\Sigma, \frac{3}{2}+}
\| \Delta_{*} \eta \|_{\Sigma, \frac{1}{2}} \\
&\lesssim& \| \sigma \|_{C^{3}} \|  c \|_{\Sigma, \frac{3}{2}+} \| \eta \|_{\Sigma, \frac{5}{2}}\\
&   \lesssim& \sqrt{\mathcal{E}} \sqrt{\mathcal{E}} \ls \E.
\end{eqnarray*}

We now turn our attention to the second term in $G^{3,2}$ by expanding
\begin{equation*}
  H(\eta )  - \Delta_{*} \eta = \Delta_{*} \eta \left( \frac{1}{\sqrt{1+ | \nabla_{*} \eta|^{2}}} -1 \right) -
\frac{( \nabla_{*} \eta \cdot \nabla_{*}) \nabla_{*} \eta \cdot \nabla_{*} \eta}{[1+ | \nabla_{*} \eta|^{2}]^{3/2}}.
\end{equation*}
Then we use the estimate \eqref{i_s_p_01} with $r= s_{1} =s_{2} = \frac{3}{2}$ to bound
\begin{eqnarray*} 
&& \| \sigma( c_0+ c) (H(\eta) - \Delta_{*} \eta) \mathcal{N} \|_{\Sigma, \frac{3}{2}}  \\ 
&\lesssim& \|\sigma \|_{C^{2}} \| c \|_{\Sigma, \frac{3}{2}+}
\| |\nabla_{*} \eta|^{2} \|_{\Sigma, \frac{3}{2} } \| \nabla_{*}^{2} \eta \|_{\Sigma, \frac{3}{2}}
 \|  \nabla_{*} \eta \|_{\Sigma, \frac{3}{2} } \\
 &\lesssim&   \sqrt{\mathcal{E}}   \sqrt{\mathcal{D}}.
\end{eqnarray*}
Similarly we use estimate  \eqref{i_s_p_02} with $r= s_{1}= \frac{1}{2}$, $s_{2} = \frac{3}{2}+$ and \eqref{i_s_p_01} with $r=s_{1}=s_{2} = \frac{3}{2}+$ in order to show that
\begin{eqnarray*} 
&& \| \sigma( c_0 + c) (H(\eta) - \Delta_{*} \eta) \mathcal{N}] \|_{\Sigma, \frac{1}{2}}  \\
&\lesssim& \|\sigma \|_{C^{2}} \| c \|_{\Sigma, \frac{3}{2}+ }
\| |\nabla_{*} \eta|^{2} \|_{\Sigma, \frac{3}{2} +} \| \nabla_{*}^{2} \eta \|_{\Sigma, \frac{1}{2}}
 \|  \nabla_{*} \eta \|_{\Sigma, \frac{3}{2}+ } \\
 &\lesssim& \| c \|_{\Sigma, \frac{3}{2}+ }
\|  \nabla_{*} \eta  \|_{\Sigma, \frac{3}{2} +}^{2} \| \nabla_{*}^{2} \eta \|_{\Sigma, \frac{1}{2}}
 \|  \nabla_{*} \eta \|_{\Sigma, \frac{3}{2}+ }
 \\
 &\lesssim&   \sqrt{\mathcal{E}}   \sqrt{\mathcal{E}} \ls \E.
\end{eqnarray*}

Now, for the third term in $G^{3,2}$ we write  $\mathcal{N} - e_{3} = - \partial_{1} \eta e_{1} - \partial_{2} \eta e_{2}$ and then use \eqref{i_s_p_01} with $r=s_{1} =s_{2} = \frac{3}{2}$ to bound
 \begin{eqnarray*}
 &&\| \sigma( c_0 + c) \Delta_{*} \eta (\mathcal{N} - e_{3}) \|_{\Sigma, \frac{3}{2}} \lesssim  \| \sigma \|_{C^{2}} \| c \|_{\Sigma, \frac{3}{2}} \| \Delta_{*} \eta \| _{\Sigma, \frac{3}{2}}  \|  \nabla_{*} \eta \|_{\Sigma, \frac{3}{2}}  
 \lesssim  \sqrt{\mathcal{E}} \sqrt{\mathcal{D}} \sqrt{\mathcal{E}}\\
 &\lesssim& \sqrt{\mathcal{E}} \sqrt{\mathcal{D}} .
 \end{eqnarray*}
We then use \eqref{i_s_p_02} with $r=s_{1}  = \frac{1}{2}$, $s_{2} = \frac{3}{2}+$ and \eqref{i_s_p_01} with $r=s_{1} =s_{2} = \frac{3}{2}+$ to bound
 \begin{eqnarray*}
 &&\| \sigma( c_0 + c) \Delta_{*} \eta (\mathcal{N} - e_{3}) \|_{\Sigma, \frac{1}{2}} \lesssim  \| \Delta_{*} \eta \|_{\Sigma, \frac{1}{2}} \| \sigma( c_0 + c) \nabla_{*} \eta \|_{\Sigma, \frac{3}{2}+}
 \\ &\lesssim&  \|  \eta \| _{\Sigma, \frac{5}{2}}  \| \sigma \|_{C^{2}} \| c \|_{\Sigma, \frac{3}{2}+} \|  \nabla_{*} \eta \|_{\Sigma, \frac{3}{2}+}  
 \lesssim \| \sigma \|_{C^{2}} \sqrt{\mathcal{E}} \sqrt{\mathcal{E}} \sqrt{\mathcal{E}}\\
 &\lesssim& \sqrt{\mathcal{E}} \sqrt{\mathcal{E}} \ls \E.
 \end{eqnarray*}

The above analysis covers all three terms in $G^{3,2}$, and so we deduce that 
\begin{equation*}
 \norm{G^{3,2}}_{\Sigma,3/2} \ls \sqrt{\E} \sqrt{\D} \text{ and } \norm{G^{3,2}}_{\Sigma,1/2} \ls \E,   
\end{equation*}
which are the desired $G^{3,2}$ estimates in \eqref{nlin_ests_G_1} and \eqref{nlin_ests_G_2}.

\emph{Step 3:}  $G^{3,3}$ estimates
 
Now we estimate the three terms appearing in $G^{3,3}$.

To handle the first term we note that 
\begin{equation*}
[1+s^{2}]^{\frac{1}{2}}=1 + \int^{s}_{0} \frac{\tau}{[1+  \tau^{2}]^{1/2}}\dd \tau 
\end{equation*}
and 
\begin{equation*}
\sigma^{\prime}( c_0 + c) = \sigma^{\prime}( c_0 ) + \int^{c}_{0} \sigma^{\prime\prime} ( c_0 + s) \dd s. 
\end{equation*}
These combine with the estimate \eqref{i_s_p_01} with $r=s_{1}=s_{2} = \frac{3}{2}$ and yield the estimate
\begin{eqnarray*}
&& \|  (  \sqrt{1+ |\nabla_{*} \eta|^{2}}-1) \sigma^{\prime} (c_{0} + c ) \nabla_{  *} c \|_{\Sigma, \frac{3}{2}  } 
 \lesssim  \|  |\nabla_{*}\eta|^{2} \|_{\Sigma, \frac{3}{2}}  \|\sigma \|_{C^{3}} \|c \|_{\Sigma, \frac{3}{2}}  \| \nabla_{*}c \|_{\Sigma, \frac{3}{2}}\\
 & \lesssim&\|   \nabla_{*}\eta \|_{\Sigma, \frac{3}{2}}^{2}   \|\sigma \|_{C^{3}} \|c \|_{\Sigma, \frac{3}{2}}  \| \nabla_{*}c \|_{\Sigma, \frac{3}{2}} 
 \lesssim \mathcal{E} \sqrt{\mathcal{E}} \sqrt{\mathcal{D}} \\
 &\lesssim&  \sqrt{\mathcal{E}} \sqrt{\mathcal{D}}.
\end{eqnarray*}
We similarly use \eqref{i_s_p_02} with $r= s_{1}= \frac{1}{2}$, $s_{2} = \frac{3}{2}+$ and \eqref{i_s_p_01} with $r= s_{1}= s_{2} = \frac{3}{2}+$ to see that
\begin{eqnarray*}
&& \|  (  \sqrt{1+ |\nabla_{*} \eta|^{2}}-1) \sigma^{\prime} ( c_0 + c ) \nabla_{  *} c \|_{\Sigma, \frac{1}{2}  } \\
&\lesssim&
\|  (  \sqrt{1+ |\nabla_{*} \eta|^{2}}-1) \sigma^{\prime} ( c_0 + c ) \|_{\Sigma, \frac{3}{2}+}\|\nabla_{  *} c  \|_{\Sigma, \frac{1}{2}}\\
&\lesssim&
\|  (  \sqrt{1+ |\nabla_{*} \eta|^{2}}-1)\|_{\Sigma, \frac{3}{2}+} \| \sigma^{\prime} ( c_0 + c ) \|_{\Sigma, \frac{3}{2}+}\|\nabla_{  *} c  \|_{\Sigma, \frac{1}{2}} 
\\
&
 \lesssim  &\|  |\nabla_{*}\eta|^{2} \|_{\Sigma, \frac{3}{2}+}  \|\sigma \|_{C^{3}} \|c \|_{\Sigma, \frac{3}{2}+}  \| \nabla_{*}c \|_{\Sigma, \frac{1}{2}}\\
 & \lesssim&\|   \eta \|_{\Sigma, \frac{5}{2}+}^{2}  \| c \|_{\Sigma, 2} ^{2}
 \lesssim  \| \eta \|_{\Sigma,3}^{2 }  \| c \|_{\Sigma, 2} ^{2} \lesssim \mathcal{E}\mathcal{E} \\
 &\lesssim& \E.
\end{eqnarray*}

To estimate the second term in $G^{3,3}$ we use \eqref{i_s_p_01} with $r=s_{1} = s_{2} = \frac{3}{2}$: 
\begin{eqnarray*}
\|   ( \sigma^{\prime} ( c_0 + c) - \sigma^{\prime} ( c_0)) \nabla_{*} c\|_{ \Sigma,\frac{3}{2}} &\lesssim& \|  \sigma^{\prime}( c_0 + c) - \sigma^{\prime} ( c_0) \|_{\Sigma, \frac{3}{2}} \|c \|_{\Sigma, \frac{5}{2}}
\lesssim \|\sigma \|_{C^{4}} \| c \|_{\Sigma, \frac{3}{2}} \| c\|_{\Sigma, \frac{5}{2}}\\
& \lesssim& \sqrt{\mathcal{E}}\sqrt{\mathcal{D}}.
\end{eqnarray*}
Also, we use \eqref{i_s_p_02} with $r=s_{1} = \frac{1}{2}$ and $s_{2} = \frac{3}{2}+$ to bound
\begin{eqnarray*}
\|   ( \sigma^{\prime} ( c_0 + c) - \sigma^{\prime} ( c_0)) \nabla_{*} c\|_{ \Sigma,\frac{1}{2}} &\lesssim& \|  \sigma^{\prime}( c_0+ c) - \sigma^{\prime} ( c_0) \|_{\Sigma, \frac{3}{2}+} \|c \|_{\Sigma, \frac{3}{2}}
\lesssim \|\sigma \|_{C^{2}} \| c \|_{\Sigma, \frac{3}{2}+} \| c\|_{\Sigma, \frac{3}{2}}\\
& \lesssim& \sqrt{\mathcal{E}}\sqrt{\mathcal{E}} \ls \E.
\end{eqnarray*}

For the third term we first write  $\nabla_{\Sigma,*} c - \nabla_{*} c = - \nu_{*} (\nu_{*} \cdot \nabla_{*}) c$.  Then  \eqref{i_s_p_01} with $r=s_{1} = s_{2} = \frac{3}{2}$ tells us that
\begin{eqnarray*}
&&\| \sqrt{1+ |\nabla_{*} \eta|^{2}} \sigma^{\prime}(c_0 + c)( \nabla_\Gamma c -  \nabla_{*} c  ) \|_{\Sigma, \frac{3}{2}} 
 \lesssim  \| \sigma^{\prime}(c_0 + c) \|_{\Sigma, \frac{3}{2}} \|  \nabla_{*} \eta (   \nu_{*} \cdot \nabla_{*} c  ) \|_{\Sigma, \frac{3}{2}} 
\\
 &
\lesssim&
\| \sigma \|_{C^{3}} \| c \|_{\Sigma, \frac{3}{2}}
 \| \eta \|_{\Sigma, \frac{5}{2}}^{2} \| c\|_{\Sigma, \frac{5}{2}} 
\lesssim \sqrt{\mathcal{E}} \sqrt{\mathcal{D}}.
\end{eqnarray*}
Similarly,  \eqref{i_s_p_02} with $r=s_{1} = \frac{1}{2}$, $s_{2} = \frac{3}{2}+$ and \eqref{i_s_p_01} with $r=s_{1} = s_{2} = \frac{3}{2}+$ imply that
\begin{eqnarray*}
&&\| \sqrt{1+ |\nabla_{*} \eta|^{2}} \sigma^{\prime}(c + c_0)( \nabla_\Gamma c -  \nabla_{*} c  ) \|_{\Sigma, \frac{1}{2}} 
 \lesssim  \| \sigma^{\prime}(c + c_0)  \nabla_{*} \eta \nu_{*} \|_{\Sigma, \frac{3}{2}+} \|    \nabla_{*} c   \|_{\Sigma, \frac{1}{2}} 
\\
 &
\lesssim&
\| \sigma \|_{C^{3}} \| c \|_{\Sigma, \frac{3}{2}+}
 \| \eta \|_{\Sigma, \frac{5}{2}+}^{2} \| c\|_{\Sigma, \frac{3}{2}} 
\lesssim \sqrt{\mathcal{E}} \sqrt{\mathcal{E}}.
\end{eqnarray*}

The above analysis covers all three terms in $G^{3,3}$, and so we deduce that 
\begin{equation*}
 \norm{G^{3,3}}_{\Sigma,3/2} \ls \sqrt{\E} \sqrt{\D} \text{ and } \norm{G^{3,3}}_{\Sigma,1/2} \ls \E,   
\end{equation*}
which are the desired $G^{3,3}$ estimates in \eqref{nlin_ests_G_1} and \eqref{nlin_ests_G_2}.

\emph{Step 4:}  $G^{3,4}$ estimates
 
For $G^{3,4}$ we have the estimates
\begin{equation*}
 \norm{G^{3,4}}_{\Sigma,3/2} \ls \sqrt{\E} \sqrt{\D} \text{ and } \norm{G^{3,4}}_{\Sigma,1/2} \ls \E,   
\end{equation*}
which are the desired estimates in \eqref{nlin_ests_G_1} and \eqref{nlin_ests_G_2}.  These bounds follow from the same, if not somewhat simpler, arguments used to bound $G^{3,3}$ and are thus omitted for the sake of brevity.

\emph{Step 5:}  $G^{5}$ estimates 

There are four terms in $G^5$.  We handle the first pair with the Sobolev embedding and trace theory:
\begin{eqnarray*}
&&\| u \cdot \nabla_{*} c + c \diverge_\Gamma u \|_{\Sigma,1}\\
&\lesssim& (1+ \| \nabla_{*} \eta \|_{\infty}) \| \nabla  u  \|_{L^{4}(\Sigma) } \| \nabla_{*}c \|_{L^{4}(\Sigma)  }
+ \| u \|_{L^{4}(\Sigma) } \| \nabla_{*}^{2} c \|_{L^{4}(\Sigma) } \\
&&
+ \| c \|_{L^{\infty}(\Sigma)} \| \nabla_{*}^{2} \eta \|_{ L^{4}(\Sigma)} \| u \|_{ L^{4}(\Sigma)}
 + \| c \|_{L^{\infty}(\Sigma)} \| \nabla_{*} \eta \|_{ L^{4}(\Sigma)} \| \nabla u \|_{ L^{4}(\Sigma)}\\
 &\lesssim& \sqrt{\mathcal{E}}\sqrt{\mathcal{D}}.
\end{eqnarray*}

For the third term we use the Sobolev embeddings $H^{1}(\Sigma) \hookrightarrow L^{4}(\Sigma) $ and $H^{1+}(\Sigma) \hookrightarrow L^{\infty}(\Sigma)$ to bound
\begin{eqnarray*}
&&\|  \gamma \Delta_\Gamma c - \gamma \Delta_{*} c\|_{\Sigma, 1} \lesssim \|  \nabla_{*}  \eta  \nabla_{*}^{2} c \|_{\Sigma, 1}  +  \|  \nabla_{*}^{2}  \eta  \nabla_{*} c \|_{\Sigma, 1}\\
& \lesssim &\| \eta \|_{\Sigma, \frac{5}{2}} \| c \|_{\Sigma,3}+ \| \eta \|_{\Sigma, 3} \| c \|_{\Sigma, \frac{5}{2}} + \| \nabla_{*}^{2} \eta \|_{L^{4}(\Sigma)} \| \nabla_{*} ^{2}c \|_{L^{4}(\Sigma)}  \\
&\lesssim& \sqrt{\mathcal{E}}  \sqrt{\mathcal{D}}.
\end{eqnarray*}

For the fourth term in $G^5$ we first note that
\begin{eqnarray*}
 \diverge_\Gamma u - \text{div}_{*} u 
&=&\{ \text{div}_{*} u_{*} - \nu_{*} \cdot (\nu_{*} \cdot \nabla_{*}) u_{*} - \nu_{3} (\nu_{*} \cdot \nabla_{*}) u_{3}\} - \text{div}_{*} u_{*}  \\ 
&=&- \nu_{*} \cdot (\nu_{*} \cdot \nabla_{*}) u_{*}   - \nu_{3} (\nu_{*} \cdot \nabla_{*}) u_{3} .
\end{eqnarray*}
Then by trace theory and the Sobolev embedding imply that
\begin{eqnarray*}
&&\| c_{0} [\diverge_\Gamma u - \text{div}_{*} u ] \|_{\Sigma, 1}  \lesssim  \| \nabla_{*}^{2} \eta \nabla u \|_{\Sigma, 0} + \| \nabla_{*} \eta \nabla^{2} u \|_{\Sigma,0} \lesssim  \|   \eta \|_{\Sigma, 3} \| \nabla u \|_{\Sigma, 1}\\
& \lesssim&  \|   \eta \|_{\Sigma, 3} \|  u \|_{\frac{5}{2}+} \lesssim \sqrt{\mathcal{E}}  \sqrt{\mathcal{D}}. 
\end{eqnarray*}

Combining the above analysis, we deduce that 
\begin{equation*}
  \norm{G^5}_{\Sigma,1} \ls \sqrt{\E} \sqrt{\D},  
\end{equation*}
which is the desired $G^5$ estimate in \eqref{nlin_ests_G_1}.

\end{proof}

\begin{remark}
 It is in the $G^3$ estimates of this result that we need the full power of the assumption $\sigma \in C^3$ from \eqref{sigma_assume}.
\end{remark}

\subsection{The average of $c$  }

We now aim to estimate $\int_\Sigma c$ as a nonlinear term.

\begin{prop}\label{nlin_ests_c}
Let $\E$ and $\D$ be as defined in \eqref{def_E} and \eqref{def_D}. Suppose that $\E \le \delta$, where $\delta \in (0,1)$ is the universal constant given in Lemma \ref{infinity_bounds}, and that $\D < \infty$.  Then 
\begin{equation}\label{nlin_ests_c_0}
 \abs{\int_\Sigma c} \ls \sqrt{\E} \sqrt{\D}.
\end{equation}
\end{prop}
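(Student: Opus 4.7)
The plan is to exploit the conservation law for the total surfactant mass, which was observed in the introduction: applying Proposition \ref{surf_c_ev} with $f(z)=z$ yields
\begin{equation*}
\frac{d}{dt}\int_\Sigma \tilde c\sqrt{1+|\nabla_*\eta|^2}=0,
\end{equation*}
and combined with the calibration \eqref{conserv_c} of the equilibrium concentration $c_0$ to the initial data, this gives $\int_\Sigma \tilde c\sqrt{1+|\nabla_*\eta|^2}=|\Sigma|c_0$ for all $t\ge 0$. Since $c=\tilde c-c_0$ by \eqref{c_def}, subtracting $|\Sigma|c_0=\int_\Sigma c_0$ from both sides produces the key identity
\begin{equation*}
\int_\Sigma c \;=\; \int_\Sigma \tilde c\,-\,|\Sigma|c_0 \;=\; -\int_\Sigma \tilde c\left(\sqrt{1+|\nabla_*\eta|^2}-1\right).
\end{equation*}

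Next I would estimate the right-hand side in two stages. First, the elementary pointwise bound
\begin{equation*}
0\;\le\;\sqrt{1+|\nabla_*\eta|^2}-1\;=\;\frac{|\nabla_*\eta|^2}{1+\sqrt{1+|\nabla_*\eta|^2}}\;\le\;\tfrac{1}{2}|\nabla_*\eta|^2,
\end{equation*}
together with the $L^\infty$ control $\|\tilde c\|_{L^\infty(\Sigma)}\lesssim 1$ granted by Lemma \ref{infinity_bounds} (item (4)), yields
\begin{equation*}
\left|\int_\Sigma c\right|\;\lesssim\;\|\tilde c\|_{L^\infty(\Sigma)}\int_\Sigma|\nabla_*\eta|^2\;\lesssim\;\|\eta\|_{H^1(\Sigma)}^{2}.
\end{equation*}

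The final step is to convert the quadratic $\eta$-bound into the required $\sqrt{\mathcal E}\sqrt{\mathcal D}$ form. I use the standard Sobolev interpolation inequality on $\Sigma$ (easily verified by Cauchy–Schwarz on Fourier coefficients), $\|\eta\|_{H^1(\Sigma)}^{2}\le\|\eta\|_{H^0(\Sigma)}\,\|\eta\|_{H^2(\Sigma)}$, to obtain
\begin{equation*}
\|\eta\|_{H^1(\Sigma)}^{2}\;\le\;\|\eta\|_{H^0(\Sigma)}\,\|\eta\|_{H^2(\Sigma)}\;\lesssim\;\|\eta\|_{H^3(\Sigma)}\,\|\eta\|_{H^{7/2}(\Sigma)}\;\lesssim\;\sqrt{\mathcal E}\,\sqrt{\mathcal D},
\end{equation*}
where the last inequality uses the definitions \eqref{def_E}--\eqref{def_D}. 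Chaining these bounds gives \eqref{nlin_ests_c_0}.

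I do not anticipate any serious obstacle: the only nonroutine step is conceptual, namely recognizing that the natural way to bound $\int_\Sigma c$ (which, unlike $\int_\Sigma \eta$, is not identically zero) is via surfactant mass conservation, so that it becomes quadratic in $\nabla_*\eta$. Once that identity is in hand, the rest is elementary; the use of different regularity indices for $\eta$ in $\mathcal E$ versus $\mathcal D$ is exactly what gives the improved bound $\sqrt{\mathcal E}\sqrt{\mathcal D}$ rather than the cruder $\mathcal E$.
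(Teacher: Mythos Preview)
Your proof is correct and follows essentially the same route as the paper: both use the surfactant mass conservation to rewrite $\int_\Sigma c$ as $-\int_\Sigma \tilde c\,(\sqrt{1+|\nabla_*\eta|^2}-1)$, bound the square-root term pointwise by $|\nabla_*\eta|^2$, control $\tilde c$ in $L^\infty$, and then estimate $\|\eta\|_{H^1(\Sigma)}^2\ls\sqrt{\E}\sqrt{\D}$. The paper is slightly terser in the final step (it simply writes $\ns{\eta}_1\ls\sqrt{\E}\sqrt{\D}$ without naming the interpolation), but the argument is the same.
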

\begin{proof}
We use \eqref{conserv_c} to compute
\begin{multline*}
 \int_\Sigma c = \int_\Sigma (-c_0 + \tilde{c}) = - c_0 \abs{\Sigma} + \int_{\Sigma } \tilde{c} \sqrt{1+ \abs{\nab_\ast \eta }^2} + \tilde{c} \left(1-\sqrt{1+ \abs{\nab_\ast \eta }^2} \right) \\ = -c_0\abs{\Sigma} + c_0 \abs{\Sigma} 
 + \int_\Sigma (c_0 + c) \left(1-\sqrt{1+ \abs{\nab_\ast \eta }^2} \right) = \int_\Sigma (c_0 + c) \left(1-\sqrt{1+ \abs{\nab_\ast \eta }^2} \right).
\end{multline*}
From this and the trivial bound  $0 \le \sqrt{1+x^2} -1 \le x^2$ we deduce that
\begin{equation*}
 \abs{\int_\Sigma c} \le \int_\Sigma (c_0 + \abs{c}) \abs{\nab_\ast \eta}^2 \ls (c_0 + \norm{c}_{L^\infty}) \ns{\eta}_{1} \ls (1+\sqrt{\E})\sqrt{\E} \sqrt{D} \ls \sqrt{\E} \sqrt{D}.
\end{equation*}
This proves \eqref{nlin_ests_c_0}.

\end{proof}

\section{A priori estimates  }\label{sec_aprioris}

In this section we combine energy-dissipation estimates with various elliptic estimates and estimates of the nonlinearities in order to deduce a system of a priori estimates.

\subsection{Energy-dissipation estimates  }
 
In order to state our energy-dissipation estimates we must first introduce some notation.  Recall that for a multi-index $\alpha = (\alpha_0,\alpha_1,\alpha_2,\alpha_3) \in \mathbb{N}^{1+3}$ we write $\abs{\alpha} = 2 \alpha_0 + \alpha_1 + \alpha_2 + \alpha_3$ and $\p^\alpha = \p_t^{\alpha_0} \p_1^{\alpha_1}  \p_2^{\alpha_2}\p_3^{\alpha_3}$.  For $\alpha \in \mathbb{N}^{1+3}$ we set 
 \begin{equation}\label{bar_ED_alpha}
 \begin{split}
 \bar{\mathcal{E}}_\alpha &: =  \int_{\Omega} \hal  \abs{\partial^{\alpha} u }^2 
+ \int_{\Sigma} \hal \abs{\partial^{\alpha} \eta}^2  + \frac{\sigma_{0}}{2} \abs{\nabla_{*} \partial^{\alpha} \eta}^{2 } + \frac{-  \sigma^{\prime}_{0} }{2c_{0}} \abs{ \partial^{\alpha} c}^{2}, \\
 \bar{\mathcal{D}}_\alpha &: =  \int_{\Omega}\hal  \abs{ \mathbb{D} \partial^{\alpha} u}^{2} 
 +  \int_{\Sigma} \frac{- \gamma \sigma_{0}^{\prime}}{c_{0}} \abs{\nabla_{*} \partial^{\alpha} c}^{2}.
 \end{split}
\end{equation}  
We then define
\begin{equation}\label{bar_ED}
 \bar{\mathcal{E}} := \sum_{|\alpha| \leq 2} \bar{\E}_\alpha \text{ and } 
 \bar{\mathcal{D}} := \sum_{|\alpha| \leq 2} \bar{\D}_\alpha.
\end{equation}  
We will also need to use the functional
\begin{equation}\label{F_def}
 \mathcal{F} := \int_\Omega p F^2 J,
\end{equation}
where $F^2$ is as defined in \eqref{nlin_F2}.

Our next result encodes the energy-dissipation inequality associated to $\bar{\E}$ and $\bar{\D}$.

\begin{thm}\label{energy_ev}
Suppose that $(u,p,\eta,c)$ solves \eqref{ns_geometric} on the temporal interval $[0,T]$.  Let $\E$ and $\D$ be as defined in \eqref{def_E} and \eqref{def_D}, and suppose that 
\begin{equation*}
 \sup_{0\le t \le T} \E(t) \le \delta \text{ and } \int_0^T \D(t) dt < \infty,
\end{equation*}
where $\delta \in (0,1)$ is the universal constant given in Lemma \ref{infinity_bounds}.  Let $\seb$ and $\sdb$ be given by \eqref{bar_ED} and $\mathcal{F}$ be given by \eqref{F_def}.  Then 
\begin{equation}\label{energy_ev_0}
 \frac{d}{dt} \left(\bar{\E} -\mathcal{F} \right) +  \bar{\mathcal{D}}  \lesssim     \sqrt{\mathcal{E}}  \mathcal{D}
\end{equation}
for all $t \in [0,T]$.
\end{thm}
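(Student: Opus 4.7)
The plan is to derive \eqref{energy_ev_0} by establishing a separate energy--dissipation identity for each admissible multi-index $\alpha$ with $\alpha_3 = 0$ and $|\alpha| \le 2$, and then summing. These indices split into two families, handled by the two linearizations of Section \ref{sec_ed}: the purely temporal index $\alpha = (1,0,0,0)$ is treated with the geometric form \eqref{ns_geometric_hybrid}, while the purely horizontal spatial indices (those with $\alpha_0 = 0$) are treated with the perturbed linear form \eqref{ns_perturbed}.

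For the temporal index, I would apply $\dt$ to \eqref{ns_geometric}; a direct product-rule calculation shows that $(v,q,\zeta,h) = (\dt u, \dt p, \dt \eta, \dt c)$ solves \eqref{ns_geometric_hybrid} with forcing terms $F^1,\dots,F^5$ given exactly by \eqref{nlin_F1}--\eqref{nlin_F5}, so Proposition \ref{ed_geometric} supplies an exact identity. Four of the five interaction integrals on its right-hand side, those involving $F^1,F^3,F^4,F^5$, are handled by Cauchy--Schwarz combined with \eqref{nlin_ests_F_01} and \eqref{nlin_ests_F_03}, each yielding an $O(\sqrt{\E}\D)$ bound. Here it is important to pair each dual factor with the correct $\D$-controlled norm: for instance $\|\dt u\|_0$ is bounded by $\|\dt u\|_1 \le \sqrt{\D}$, and $\|\Delta_\ast \dt\eta\|_{\Sigma,0}$ by $\|\dt\eta\|_{\Sigma,5/2} \le \sqrt{\D}$; in particular $F^5$ is not individually controlled in $L^2(\Sigma)$, which is why the paired estimate \eqref{nlin_ests_F_03} is essential. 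The remaining term $\int_\Omega \dt p\,F^2 J$ is the main obstacle, since $\dt p$ is not a component of $\D$; I would integrate by parts in time to write
\begin{equation*}
\int_\Omega \dt p \, F^2 J = \frac{d}{dt}\mathcal{F} - \int_\Omega p\,\dt(F^2 J),
\end{equation*}
and invoke \eqref{nlin_ests_F_02} to bound the remainder by $\sqrt{\E}\D$. This maneuver is the sole reason the correction $-\mathcal{F}$ appears in the statement. Finally, Lemma \ref{infinity_bounds} together with the smallness $\E \le \delta$ lets me trade the $J$-weighted bulk energy and dissipation produced by Proposition \ref{ed_geometric} for the flat $L^2$ versions in \eqref{bar_ED_alpha}, absorbing the errors into $\sqrt{\E}\D$.

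For the horizontal indices $\alpha$ with $\alpha_0 = \alpha_3 = 0$ and $\alpha_1 + \alpha_2 \le 2$, I would apply $\pa$ to \eqref{ns_perturbed}. Because the differential operators there are constant-coefficient, $(\pa u, \pa p, \pa \eta, \pa c)$ solves \eqref{eqtn_v} with $\Phi^i = \pa G^i$, and Proposition \ref{ed_pert} then supplies an exact identity matching $\bar{\E}_\alpha$ and $\bar{\D}_\alpha$ on the nose. Its right-hand side is a sum of pairings of $\pa(u,p,\eta,c)$ against $\pa G^i$; for $|\alpha| = 2$ some pairings formally demand more derivatives of $G^i$ than Theorem \ref{nlin_ests_G} bounds, so I would integrate by parts in the horizontal direction (which produces no boundary terms on $\Sigma$ or $\Sigma_b$, since $\partial_1,\partial_2$ are tangent to both) to shift the extra derivative onto the dual factor, which lies in a norm controlled by $\D$ (for example $u \in H^3$, $\eta \in H^{7/2}$, $c \in H^3$). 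The $G^i$-norms that then appear are precisely those controlled by \eqref{nlin_ests_G_01}, so each pairing contributes $O(\sqrt{\E}\D)$. Summing the temporal identity with the horizontal identities over all admissible $\alpha$ yields \eqref{energy_ev_0}. The principal difficulty of the whole argument is thus isolated in the time-integration-by-parts step on $\int_\Omega \dt p\,F^2 J$, which is what forces the functional $\mathcal{F}$ into the left-hand side of the statement and relies crucially on the twin bounds in \eqref{nlin_ests_F_02}.
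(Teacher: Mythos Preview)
Your proposal is correct and follows essentially the same approach as the paper: the same split into the temporal index (handled via the geometric form and Proposition \ref{ed_geometric}, with the time-integration-by-parts on $\int_\Omega \dt p\,F^2 J$ producing $\mathcal{F}$) and the purely horizontal indices (handled via the perturbed linear form and Proposition \ref{ed_pert}, with horizontal integration by parts at top order), followed by summation. The only minor sharpening the paper makes explicit is that for $|\alpha|=2$, $\alpha_0=0$, the integration by parts is applied specifically to the $G^1$, $G^4$, $G^5$ pairings (the $G^2$ and $G^3$ pairings are estimated directly by duality), but this is consistent with what you describe.
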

\begin{proof}

Let $\alpha \in \mathbb{N}^{1+3}$ with $\abs{\alpha}\le 2$.  We apply $\p^\alpha$ to \eqref{ns_geometric} to derive an equation for $(\p^\alpha u,\p^\alpha p,\p^\alpha \eta,\p^\alpha c)$.  We will consider the form of this equation in different ways depending on $\alpha$.

Suppose that $\abs{\alpha} =2$ and $\alpha_0 =1$, i.e. that $\p^\alpha = \p_t$.  Then $v = \dt u$, $q = \dt p$, $\zeta = \dt \eta$, and $h = \dt c$ satisfy \eqref{ns_geometric_hybrid} with $F^1,\dotsc,F^5$ as given in \eqref{nlin_F1}--\eqref{nlin_F5}.  According to Proposition \ref{ed_geometric} we then have that 
\begin{multline*} 
 \frac{d}{dt} \left( \int_{\Omega} \frac{\abs{\dt u}^{2}}{2} J +  \int_{\Sigma} \frac{ \abs{\dt \eta}^{2}}{2} 
+ \int_{\Sigma} \sigma_{0} \frac{\abs{\nabla_{*} \dt \eta}^{2}}{2} + 
\frac{- \sigma_{0}^{\prime}}{c_0} \int_{\Sigma} \frac{\abs{\dt c}^{2}}{2}
 \right) + \int_{\Omega} \frac{\abs{\mathbb{D}_{\mathcal{A}} \dt u}^{2}}{2} J
 +  \frac{- \gamma \sigma_{0}^{\prime}}{c_0} \int_{\Sigma} \abs{\nabla_{*} \dt c}^{2} 
\\
=  \int_{\Omega}  ( \dt u\cdot F^{1} + \dt p F^{2})J + \int_{\Sigma} -\dt u\cdot F^{3}   + \int_{\Sigma} (\dt \eta - \sigma_{0} \Delta_{*} \dt \eta) F^{4} +  \frac{- \sigma_{0}^{\prime}}{c_0} \int_{\Sigma} \dt c \cdot F^{5}.
\end{multline*}
We then write 
\begin{equation*}
 \int_{\Omega} \dt p F^2 J = \frac{d}{dt} \int_\Omega p F^2 J - \int_\Omega p (\dt F^2 J + F^2 J),
\end{equation*}
collect the temporal derivative terms,  and then apply the estimates \eqref{nlin_ests_F_01}--\eqref{nlin_ests_F_03} of Theorem \ref{nlin_ests_F}, the estimates of Lemma \ref{infinity_bounds}, and the usual trace estimates to deduce that 
\begin{equation}\label{energy_ev_2}
\frac{d}{dt}\left( \seb_{(1,0,0,0)} - \F \right) + \sdb_{(1,0,0,0)}  \ls   \sqrt{\mathcal{E}}  \mathcal{D},
\end{equation}
where $\seb_{(1,0,0,0)}$ and $\sdb_{(1,0,0,0)}$ are as defined in \eqref{bar_ED_alpha}.

Next we consider $\alpha \in \mathbb{N}^{1+3}$ with $\alpha_0 =0$, i.e. no temporal derivatives.  In this case we view $(u,p,\eta,c)$ in terms of \eqref{ns_perturbed}, which then means that  $(v,q,\zeta,h) = (\p^\alpha u, \p^\alpha p, \p^\alpha \eta, \p^\alpha c)$ satisfy  \eqref{eqtn_v} with $\Phi^i = \p^\alpha G^i$ for $i=1,\dotsc,5$, where the nonlinearities $G^i$ are as defined in \eqref{nlin_G1}--\eqref{nlin_G5}.  We may then apply Proposition \ref{ed_pert} to see that for $\abs{\alpha} \le 2$ and $\alpha_0 =0$ we have the identity
\begin{multline}\label{energy_ev_1}
\frac{d}{dt} \bar{\E}_\alpha + \bar{\D}_\alpha 
= \int_{\Omega} \p^\alpha u \cdot \p^\alpha G^{1} + \p^\alpha p \p^\alpha G^{2} - \p^\alpha u \cdot \nabla \p^\alpha G^{2} \\
+ \int_{\Sigma} - \p^\alpha u \cdot \p^\alpha G^{3} + \p^\alpha \eta \p^\alpha G^{4} - \sigma_{0} \p^\alpha G^{4} \Delta_{*} \p^\alpha \eta- \frac{\sigma^{\prime}_{0}}{ c_0} \p^\alpha G^{5} \p^\alpha c .
\end{multline}

When $\abs{\alpha}=2$ and $\alpha_0=0$ we write $\partial^\alpha = \partial^\beta \partial^\omega$ for $\abs{\beta} = \abs{\omega} =1$.  We then integrate by parts in the $G^1,G^4,$ and $G^5$ terms in \eqref{energy_ev_1} to estimate 
\begin{multline*}
 \text{RHS of } \eqref{energy_ev_1}
=  \int_{\Omega} -\p^{\alpha+\beta} u \cdot \p^\omega G^{1} + \p^\alpha p \p^\alpha G^{2} - \p^\alpha u \cdot \nabla \p^\alpha G^{2} \\
+ \int_{\Sigma} - \p^\alpha u \cdot \p^\alpha G^{3} - \p^{\omega} \eta \p^{\alpha+\beta} G^{4} + \sigma_{0} \p^{\alpha+\beta} G^{4} \Delta_{*} \p^{\omega} \eta+ \frac{\sigma^{\prime}_{0}}{ c_0} \p^{\alpha+\beta} G^{5} \p^{\omega} c \\
\ls \|  u \|_{3} \| G^{1}\|_{1} + \| p \|_{2} \| G^{2} \|_{2} + \| u \|_{3} \| G^{2} \|_{2}\\
+ \| D^{2} u \|_{\Sigma, \frac{1}{2}} \| D^{2} G^{3} \|_{\Sigma, - \frac{1}{2}}
+ \| D^{3} G^{4} \|_{\Sigma,  -\frac{1}{2}}
[ \| D \eta  \|_{\Sigma, \frac{1}{2}} +\| D^{3} \eta \|_{\Sigma, \frac{1}{2}}]
 + \| D^{2} G^{5} \|_{\Sigma, 0} \| D^{2} c\|_{\Sigma, 0}\\
 \lesssim  \sqrt{\mathcal{D}} \left\{
  \| G^{1}\|_{1} + \| G^{2} \|_{2} + \| G^{3} \|_{\Sigma, \frac{3}{2}}
  + \| G^{4} \|_{\Sigma, \frac{5}{2}} + \| G^{5} \|_{\Sigma,1}
 \right\}.
\end{multline*}
The estimate \eqref{nlin_ests_G_01} of Theorem  \ref{nlin_ests_G} then tells us that 
\begin{equation*}
  \text{RHS of } \eqref{energy_ev_1} \ls \sqrt{\E} \D,
\end{equation*}
and so we find  that for  $\seb_{\alpha}$ and $\sdb_{\alpha}$ as in \eqref{bar_ED_alpha} we have the inequality
\begin{equation}\label{energy_ev_3}
 \frac{d}{dt} \sum_{\substack{\abs{\alpha}=2 \\\alpha_0 =0}}  \seb_{\alpha}  +  \sum_{\substack{\abs{\alpha}=2 \\\alpha_0 =0}} \sdb_{\alpha}  \ls \sqrt{\E} \D.
\end{equation}

On the other hand, if $\abs{\alpha} < 2$ then we must have that $\alpha_0 =0$, and we can directly apply Theorem  \ref{nlin_ests_G} to see that 
\begin{equation*}
  \text{RHS of } \eqref{energy_ev_1} \ls \sqrt{\E} \D.
\end{equation*}
From this we deduce that 
\begin{equation}\label{energy_ev_4}
 \frac{d}{dt} \sum_{\abs{\alpha} \le 1}  \seb_{\alpha}  +   \sum_{\abs{\alpha} \le 1}  \sdb_{\alpha}  \ls  \sqrt{\E} \D.
\end{equation}

Now, to deduce \eqref{energy_ev_0} we simply sum \eqref{energy_ev_2}, \eqref{energy_ev_3}, and \eqref{energy_ev_4}.

\end{proof}

\subsection{Enhanced energy estimates  }

From the energy-dissipation estimate of Theorem \ref{energy_ev} we have control of $\bar{\E}$ and $\bar{\D}$.  Our goal now is to show that these can be used to control $\E$ and $\D$ up to some error terms that we will be able to guarantee are small.  Here we focus on the estimate for the energies, $\bar{\E}$ and $\E$.

\begin{thm}\label{enhance_energy}
Let $\E$ be as defined in \eqref{def_E}. Suppose that $\E \le \delta$, where $\delta \in (0,1)$ is the universal constant given in Lemma \ref{infinity_bounds}.  Then 
\begin{equation}\label{enhance_energy_0}
\mathcal{E} \lesssim \bar{\mathcal{E}} + \mathcal{E}^{2}.
\end{equation}
\end{thm}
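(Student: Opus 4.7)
The plan is to recover each term in $\mathcal{E}$ that is not already controlled by $\bar{\mathcal{E}}$, viewing the system in its perturbed linear form \eqref{ns_perturbed} and treating the $G^i$ as small nonlinear remainders via Theorem \ref{nlin_ests_G}. From the definitions, the pieces $\|\dt u\|_0^2$, $\|\dt c\|_{\Sigma, 0}^2$, $\|\eta\|_{\Sigma, 3}^2$, and $\|c\|_{\Sigma, 2}^2$ of $\mathcal{E}$ are directly dominated by $\bar{\mathcal{E}}$ (the last two using that $\Sigma$ is two-dimensional, so horizontal derivatives exhaust all spatial directions). Hence only the full $H^2$ norm of $u$, the full $H^1$ norm of $p$, and the boundary norms $\|\dt \eta\|_{\Sigma, 3/2}^2$ and $\|\dt^2 \eta\|_{\Sigma, -1/2}^2$ require separate treatment.

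The first move would be to apply standard Stokes regularity to \eqref{ns_perturbed}, moving $\dt u$ to the right of the momentum equation and reading the boundary stress datum on $\Sigma$ as $\eta e_3 - \sigma_0 \Delta_* \eta e_3 - \sigma_0' \nabla_* c + G^3 \in H^{1/2}(\Sigma)$. Mixed-boundary Stokes estimates (Dirichlet on $\Sigma_b$, prescribed stress on $\Sigma$) then give
\begin{equation*}
\|u\|_2^2 + \|p\|_1^2 \ls \|\dt u\|_0^2 + \|G^1\|_0^2 + \|G^2\|_1^2 + \|\eta\|_{\Sigma, 5/2}^2 + \|c\|_{\Sigma, 3/2}^2 + \|G^3\|_{\Sigma, 1/2}^2.
\end{equation*}
Combining this with the bounds $\|G^1\|_0 + \|G^2\|_1 + \|G^3\|_{\Sigma, 1/2} \ls \mathcal{E}$ from Theorem \ref{nlin_ests_G} and with the control of $\|\dt u\|_0^2, \|\eta\|_{\Sigma,3}^2, \|c\|_{\Sigma,2}^2$ by $\bar{\mathcal{E}}$ yields $\|u\|_2^2 + \|p\|_1^2 \ls \bar{\mathcal{E}} + \mathcal{E}^2$. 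Next, the kinematic condition $\dt \eta = u_3 + G^4$ together with the trace $H^2(\Omega) \to H^{3/2}(\Sigma)$ and $\|G^4\|_{\Sigma, 3/2} \ls \mathcal{E}$ gives $\|\dt \eta\|_{\Sigma, 3/2}^2 \ls \|u\|_2^2 + \mathcal{E}^2 \ls \bar{\mathcal{E}} + \mathcal{E}^2$.

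The hardest step will be the bound for $\|\dt^2 \eta\|_{\Sigma, -1/2}^2$. Differentiating the kinematic condition in time yields $\dt^2 \eta = (\dt u_3)|_\Sigma + \dt G^4$, but since $\dt u$ lies only in $L^2(\Omega)$ there is no classical trace for $(\dt u_3)|_\Sigma$. The plan is to define the duality pairing against a test function $\phi \in H^{1/2}(\Sigma)$ via an $H^1$-extension $\Phi$ of $\phi$ with $\Phi|_{\Sigma_b} = 0$ and $\|\Phi\|_1 \ls \|\phi\|_{\Sigma, 1/2}$, and then exploit the identity
\begin{equation*}
\int_\Sigma \dt u_3 \phi = \int_\Omega \dt u_3 \, \p_3 \Phi + \int_\Omega \p_3 \dt u_3 \, \Phi,
\end{equation*}
substituting $\p_3 \dt u_3 = \dt G^2 - \p_1 \dt u_1 - \p_2 \dt u_2$, which follows by applying $\dt$ to the divergence condition $\diverge u = G^2$. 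Integrating by parts in the horizontal variables produces a linear piece bounded by $\|\dt u\|_0 \|\phi\|_{\Sigma, 1/2}$, while the nonlinear contribution $\int_\Omega \dt G^2 \, \Phi$ is absorbed through the smallness of the coefficients appearing in $G^2$ (which scale with $\nabla \bar\eta$) together with the small-energy hypothesis. A parallel product estimate gives $\|\dt G^4\|_{\Sigma, -1/2} \ls \mathcal{E}$, and combining these contributions yields $\|\dt^2 \eta\|_{\Sigma, -1/2}^2 \ls \|\dt u\|_0^2 + \mathcal{E}^2 \ls \bar{\mathcal{E}} + \mathcal{E}^2$. Collecting the bounds from the three steps then proves \eqref{enhance_energy_0}. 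The principal obstacle throughout is this last step: because $\dt u \in L^2(\Omega)$ admits no classical trace, one must carefully exploit the divergence structure inherited from incompressibility, together with the smallness of the nonlinear coefficients, to make sense of $(\dt u_3)|_\Sigma$ as an element of $H^{-1/2}(\Sigma)$ with the right quantitative bound.
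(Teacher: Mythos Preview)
Your proposal is correct and follows essentially the same approach as the paper. The paper likewise reduces to controlling $\|u\|_2^2 + \|p\|_1^2 + \|\dt\eta\|_{\Sigma,3/2}^2 + \|\dt^2\eta\|_{\Sigma,-1/2}^2$, applies the stress-boundary Stokes estimate \eqref{est_stokes} together with the $G^i$ bounds from Theorem \ref{nlin_ests_G} for $u$ and $p$, uses the kinematic condition and trace theory for $\dt\eta$, and handles $\dt^2\eta$ by pairing with $\phi\in H^{1/2}(\Sigma)$, extending to $E\phi\in H^1(\Omega)$ vanishing on $\Sigma_b$, and writing $\int_\Sigma \dt u_3\,\phi = \int_\Omega \dt u\cdot\nabla E\phi + \int_\Omega \dt G^2\, E\phi$; your derivation of this identity (integrate by parts in $x_3$, substitute $\p_3\dt u_3 = \dt G^2 - \p_1\dt u_1 - \p_2\dt u_2$, integrate by parts horizontally) is just an unpacking of the divergence theorem the paper invokes directly, and the two routes coincide.
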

\begin{proof}

According to the definitions of $\bar{\E}$ and $E$, in order to prove \eqref{enhance_energy_0} it suffices to prove that 
\begin{equation}\label{enhance_energy_1}
\| u \|_{2}^{2} +\| p \|_{1}^2 + \| \partial_{t} \eta  \|_{\Sigma, \frac{3}{2}}^2 + \| \partial_{t}^{2} \eta \|_{\Sigma, -\frac{1}{2}}^2 \ls  \bar{\mathcal{E}} + \mathcal{E}^{2}. 
\end{equation}

For estimating $u$ and $p$ we recall the standard Stokes estimate (see for instance \cite{GT2}): for $r \geq 0$,
\begin{equation}\label{est_stokes}
\| u \|_{r}  + \| p \|_{r-1} \lesssim \| \phi \|_{r-2} + \| \psi\|_{r-1} + \| \alpha \|_{\Sigma, r-\frac{3}{2}},
\end{equation}
if
\begin{equation*}
\begin{cases}
-\Delta u + \nabla p = \phi \in H^{r-2}(\Omega)  \\
\mathrm{div}  \ {v}= \psi \in H^{r-1}(\Omega)&  \\
(pI - \mathbb{D}u ) e_{3}  = \alpha \in H^{r-\frac{3}{2}}(\Sigma)&   \\
u|_{\Sigma_{b}} = 0. 
\end{cases}
\end{equation*}
Now, according to  \eqref{ns_perturbed} we have that
\begin{equation} \notag
\begin{cases}
   -\Delta u   + \nabla  p = - \partial_{t} u+  G^{1}& \text{in }
\Omega  \\
\text{div} \ {u}=G^{2} & \text{in }\Omega  \\
(pI - \mathbb{D}u   ) e_{3}  
= (\eta I + \sigma_{0} \Delta_{*} \eta) e_{3}- \sigma_{0}^{\prime} \nabla_{*} c + 
  G^{3} & \text{on } \Sigma \\
u = 0 & \text{on } \Sigma_{b},
\end{cases}
\end{equation}
and hence we may apply \eqref{est_stokes} and the estimate \eqref{nlin_ests_G_02} of  Theorem \ref{nlin_ests_G} to see that 
\begin{eqnarray*}
\| u \|_{2} + \|  p\|_{1} &\lesssim& \| \partial_{t} u \|_{0} + \| G^{1} \|_{0} + \| G^{2} \|_{1} + \|  (\eta I + \sigma_{0} \Delta_{*} \eta) e_{3}- \sigma_{0}^{\prime} \nabla_{*} c\|_{\Sigma, \frac{1}{2}}
+ \| G^{3} \|_{\Sigma, \frac{1}{2}}\\
&\lesssim& \sqrt{\bar{\mathcal{E}}}    + \| G^{1} \|_{0} + \| G^{2} \|_{1} + \| G^{3} \|_{\Sigma, \frac{1}{2}}\\
&\lesssim&   \sqrt{\bar{\mathcal{E}}}  + \mathcal{E}.
\end{eqnarray*}
From this we deduce that the $u,p$ estimates in \eqref{enhance_energy_1} hold.
 
To estimate the $\dt \eta$ term in \eqref{enhance_energy_1} we use the fourth equation of \eqref{ns_perturbed} in conjunction with the  estimate \eqref{nlin_ests_G_02} of  Theorem \ref{nlin_ests_G} and the usual trace estimates to see that 
\begin{equation*}
 \| \partial_{t} \eta \|_{\Sigma, \frac{3}{2}}   \lesssim   \| u_{3} \|_{\Sigma, \frac{3}{2}} + \| G^{4} \|_{\Sigma, \frac{3}{2}}   \lesssim   
 \|  u \|_{2}   + \E
   \lesssim  
 \sqrt{\bar{\mathcal{E}}} +   \mathcal{E}. 
\end{equation*}
From this we deduce that the $\dt \eta$ estimate in \eqref{enhance_energy_1} holds.

It remains only to estimate the $\dt^2 \eta$ term in \eqref{enhance_energy_1}.  For this we apply a temporal derivative to the fourth equation of \eqref{ns_perturbed} and integrate against a  function $\phi \in H^{1/2}(\Sigma)$ to see that 
\begin{equation*}
  \int_{\Sigma} \partial_{t}^{2} \eta \phi \dd x_{*} =   \int_{\Sigma} \partial_{t} u_{3} \phi \dd x_{*}  +  \int_{\Sigma} \partial_{t}  G^{4} \phi \dd x_{*} . 
\end{equation*}
Choose an extension $E\phi \in H^{1}(\Omega)$ with $E\phi|_{\Sigma} = \phi$, $E \phi|_{\Sigma_{b}}=0,$ and $\|E\phi\|_{1} \ls \| \phi \|_{\Sigma, \frac{1}{2}}$.  Then 
\begin{equation*}
\int_{\Sigma} \partial_{t} u _{3} \phi =   \int_{\Omega}   \partial_{t}u  \cdot \nabla_{x} E \phi
 +  \int_{\Omega} \partial_{t}G^{2}  E \phi  
\le    \left( \| \partial_{t} u \|_{0}  + \|   \partial_{t}G^{2} \|_{0} \right) \| \phi \|_{\Sigma, \frac{1}{2}}, 
\end{equation*}
and so again Theorem \ref{nlin_ests_G} implies that 
\begin{equation*}
\| \partial_{t}^{2} \eta \|_{\Sigma, -\frac{1}{2}} \  \lesssim   \ 
\| \partial_{t} u \|_{0} + \| \partial_{t} G^{2} \|_{0}
+ \| \partial_{t} G^{4} \|_{\Sigma, -\frac{1}{2}}  \ 
\lesssim    \  \sqrt{ \bar{\mathcal{E}} }+ {\mathcal{E}}.
\end{equation*}
From this we deduce that the $\dt^2 \eta$ estimate in \eqref{enhance_energy_1} holds.

\end{proof}

\subsection{Enhanced dissipation estimates  }

We now complement Theorem \ref{enhance_energy} by proving a corresponding result for the dissipation. 
 
\begin{thm}\label{enhance_dissipation}
Let $\E$, and $\D$ be as defined in \eqref{def_E} and \eqref{def_D}. Suppose that $\E \le \delta$, where $\delta \in (0,1)$ is the universal constant given in Lemma \ref{infinity_bounds}, and suppose that $\D < \infty$.  Then 
\begin{equation}\label{enhance_dissipation_0}
\mathcal{D} \lesssim \bar{\mathcal{D}} + \mathcal{E} \D.
\end{equation}
\end{thm}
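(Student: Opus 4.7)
The target inequality $\mathcal{D} \lesssim \bar{\mathcal{D}} + \mathcal{E}\mathcal{D}$ requires upgrading the horizontal/temporal information in $\bar{\mathcal{D}}$ to the full Sobolev control encoded in $\mathcal{D}$. The only summand that comes for free is $\|\partial_t u\|_{H^1(\Omega)}^2$, which is bounded by $\bar{\mathcal{D}}$ through Korn's inequality applied to $\partial_t u\vert_{\Sigma_b}=0$. The remaining terms I would recover in three blocks: the bulk pair $(u,p)$, the free-surface elevation $\eta$ together with its time derivatives, and the surfactant $c$ with its time derivative.

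For $(u,p)$ the main obstacle---and the crucial idea of the whole argument---is to avoid the circular dependency between $\|u\|_3+\|p\|_2$ and $\|\eta\|_{\Sigma,7/2}$ that would arise from naively applying Stokes regularity with the stress boundary condition in \eqref{ns_perturbed} (since that would place $\sigma_0\Delta_*\eta\in H^{3/2}(\Sigma)$ on the right-hand side). I would instead apply the \emph{Dirichlet} Stokes estimate with boundary datum $u\vert_\Sigma$ supplied from the trace, which decouples the bulk Stokes problem from the $\eta$ equation. Tangential and temporal $H^1$ estimates on $u$ are already under direct control of $\bar{\mathcal{D}}$ via Korn, so by trace theory
\begin{equation*}
\|u\vert_\Sigma\|_{H^{5/2}(\Sigma)}^2 \lesssim \|u\|_{H^1(\Omega)}^2 + \|\bar{D}^2 u\|_{H^1(\Omega)}^2 \lesssim \bar{\mathcal{D}}.
\end{equation*}
Standard Stokes regularity for the Dirichlet problem formed by the first two equations of \eqref{ns_perturbed} (compatibility coming automatically from the boundary conditions and the divergence equation), combined with the nonlinear bounds of Theorem \ref{nlin_ests_G}, then yields $\|u\|_3^2 + \|p\|_2^2 \lesssim \bar{\mathcal{D}} + \mathcal{E}\mathcal{D}$.

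With $(u,p)$ in hand, I would recover $\eta$ from the vertical component of the stress boundary condition of \eqref{ns_perturbed}, which rearranges into the elliptic equation
\begin{equation*}
-\sigma_0 \Delta_* \eta + \eta = p - 2\partial_3 u_3 - G^3_3 \quad \text{on } \Sigma.
\end{equation*}
Elliptic regularity on the torus produces $\|\eta\|_{\Sigma,7/2}^2 \lesssim \|p\|_2^2 + \|u\|_3^2 + \|G^3\|_{\Sigma,3/2}^2 \lesssim \bar{\mathcal{D}} + \mathcal{E}\mathcal{D}$. Differentiating the kinematic equation $\partial_t \eta = u_3 + G^4$ zero and one times in time and using the $G^4$ and $\partial_t G^4$ bounds from Theorem \ref{nlin_ests_G} gives $\|\partial_t\eta\|_{\Sigma,5/2}^2 + \|\partial_t^2\eta\|_{\Sigma,1/2}^2 \lesssim \bar{\mathcal{D}} + \mathcal{E}\mathcal{D}$.

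For the surfactant, $\bar{\mathcal{D}}$ already controls $\|\nabla_* c\|_{H^2(\Sigma)}^2$ and $\|\nabla_* \partial_t c\|_{H^0(\Sigma)}^2$, so by Poincar\'e on the torus the task reduces to estimating scalar averages. Proposition \ref{nlin_ests_c} directly gives $|\int_\Sigma c|^2 \lesssim \mathcal{E}\mathcal{D}$, while integrating the surfactant equation of \eqref{ns_perturbed} over $\Sigma$ and using $\int_\Sigma \mathrm{div}_* u = \int_\Sigma \Delta_* c = 0$ by periodicity reduces $|\int_\Sigma \partial_t c|^2$ to a bound on $\|G^5\|_{\Sigma,1}^2 \lesssim \mathcal{E}\mathcal{D}$ from Theorem \ref{nlin_ests_G}. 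Assembling all contributions delivers \eqref{enhance_dissipation_0}.
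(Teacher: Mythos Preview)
Your overall strategy matches the paper's exactly: decouple the bulk Stokes problem from $\eta$ via the Dirichlet Stokes estimate with boundary datum $u\vert_\Sigma$ obtained by tangential trace, then recover $\eta$ from the normal stress condition, then $\partial_t\eta,\partial_t^2\eta$ from the kinematic equation, and finally $c,\partial_t c$ via Poincar\'e plus the average estimates. The surfactant block and the $\partial_t^k\eta$ block are handled exactly as in the paper.

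There is, however, a genuine gap in your $(u,p)$ step. The Dirichlet Stokes estimate only yields $\|u\|_3 + \|\nabla p\|_1$, not $\|p\|_2$, because the pressure in the Dirichlet problem is determined only up to an additive constant. Your subsequent $\eta$ estimate, based on $(-\sigma_0\Delta_* + 1)\eta = p - 2\partial_3 u_3 - G^3_3$, requires $\|p\|_{\Sigma,3/2}$ and hence $\|p\|_2$, so the argument as written is circular: you need $\|p\|_2$ to get $\|\eta\|_{\Sigma,7/2}$, but fixing the constant in $p$ (via the stress boundary condition or the stress Stokes estimate \eqref{est_stokes}) requires $\|\eta\|_{\Sigma,7/2}$. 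The paper breaks this loop by first applying a horizontal derivative $\partial^\alpha$, $|\alpha|=1$, to the normal stress identity, obtaining $(1-\sigma_0\Delta_*)\partial^\alpha\eta = \partial^\alpha p - \partial_3\partial^\alpha u_3 - \partial^\alpha G^3_3$; now only $\|\nabla_* p\|_{\Sigma,1/2}\lesssim\|\nabla p\|_1$ enters, which is available from Dirichlet Stokes. This gives $\|\nabla_*\eta\|_{\Sigma,5/2}$, and the zero-average condition \eqref{z_avg} plus Poincar\'e upgrades it to $\|\eta\|_{\Sigma,7/2}$. Only then does the paper return to the stress Stokes estimate \eqref{est_stokes} to recover the full $\|p\|_2$.
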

\begin{proof}

Recall the Stokes elliptic estimate for the Stokes problem with Dirichlet boundary conditions (see for instance \cite{temam}): for $r \geq 2$,
\begin{equation}\label{stokes_WTK}
\| u \|_{r} + \| \nabla p \|_{r-2}  \lesssim \| f \|_{r-2} + \| h \|_{r-1} + \| \varphi_{1} \|_{\Sigma, r- \frac{1}{2}}+ \| \varphi_{2} \|_{\Sigma_b, r- \frac{1}{2}},
\end{equation}
if
\begin{equation*}
\begin{cases}
   -\Delta u   + \nabla  p =  f& \text{in }
\Omega  \\
\text{div} \ {u}=h & \text{in }\Omega  \\
u = \varphi_{1} & \text{on } \Sigma \\
u = \varphi_{2} & \text{on } \Sigma_{b}.
\end{cases}
\end{equation*}
We know that 
\begin{equation*}
\|  u \|_{1} +  \| \nabla_{*} u \|_{1} + \| \nabla_{*}^{2} u \|_{1} \lesssim \sqrt{\bar{\mathcal{D}}}, 
\end{equation*}
and so trace theory provides us with the estimate 
\begin{equation*}
 \| u \|_{\Sigma, \frac{5}{2}}   \lesssim \sqrt{\bar{\mathcal{D}}}.
\end{equation*}
We also have that  $\| \p_{t} u \|_{1} \lesssim \sqrt{\bar{\mathcal{D}}}$, and Theorem \ref{nlin_ests_G} tells us that 
\begin{equation*}
 \| G^{1} \|_{1} + \| G^{2} \|_{2} \lesssim \sqrt{\mathcal{E}}\sqrt{\mathcal{D}}.
\end{equation*}
We may thus apply \eqref{stokes_WTK} with $r=3$ and $f= - \p_{t} u + G^{1}$, $h = G^{2},$ $\varphi_{1} = u|_{\Sigma}$, and $\varphi_{2} =0$ to obtain
\begin{equation}\label{enhance_dissipation_1}
\| u \|_{3} + \| \nabla p \|_{1} \lesssim \|  - \p_{t} u + G^{1} \|_{1}
+ \| G^{2}\|_{2} + \| u \|_{\Sigma, \frac{5}{2}}
\lesssim \sqrt{\bar{\mathcal{D}}} + \sqrt{\mathcal{E}}\sqrt{\mathcal{D}}. 
\end{equation}

We now turn to the $\eta$ estimates.  For $\alpha \in \mathbb{N}^2$ with $\abs{\alpha}=1$ we apply $\p^\alpha$ to the third equation of \eqref{ns_perturbed} to obtain
\begin{equation*}
(1- \sigma_{0} \Delta_{*}) \p^\alpha \eta= \p^\alpha p - \p_{3} \p^\alpha u_{3} - \p^\alpha G_{3}^{3}. 
\end{equation*}
Then standard elliptic estimates and the trace estimates imply that
\begin{eqnarray*}
\| \nabla_{*} \eta \|_{\Sigma, \frac{5}{2}} &=& \sum_{\abs{\alpha}=1} \| \p^\alpha \eta \|_{\Sigma, \frac{5}{2}}   \\ 
  &\lesssim& \sum_{\abs{\alpha}=1} \| \p^\alpha p - \p_{3} \p^\alpha u_{3} - \p^\alpha G_{3}^{3}   \|_{\Sigma,\frac{1}{2}} \\
 &\lesssim& \| \nabla p \|_{1} + \| u \|_{3} + \| G^{3} \|_{\Sigma, \frac{3}{2}}\\
 &\lesssim&  \sqrt{\bar{\mathcal{D}}} + \sqrt{\mathcal{E}}\sqrt{\mathcal{D}}.
\end{eqnarray*}
We know from \eqref{z_avg} that $\eta$ has zero average, so   the Poincar\'e inequality tells us that $ \| \eta \|_{\Sigma,0}   \lesssim   \| \nabla_{*} \eta \|_{\Sigma, 0}$, and hence 
\begin{equation}\label{enhance_dissipation_2}
\| \eta \|_{\Sigma, \frac{7}{2}} \ls\| \eta \|_{\Sigma,0} + \| \nab_{*} \eta \|_{\Sigma, \frac{5}{2}} \ls \| \nab_{*} \eta \|_{\Sigma, \frac{5}{2}}     \lesssim    \sqrt{\bar{\mathcal{D}}} + \sqrt{\mathcal{E}}\sqrt{\mathcal{D}}.
\end{equation}

To estimate the temporal derivatives of $\eta$ we use the fourth equation in \eqref{ns_perturbed}, the estimates of Theorem \ref{nlin_ests_G}, and \eqref{enhance_dissipation_1}:
\begin{equation}\label{enhance_dissipation_3}
\| \partial_{t} \eta \|_{\Sigma, \frac{5}{2}}  \leq  \|   u_{3} \|_{\Sigma, \frac{5}{2}} + \|   G^{4}\|_{\Sigma, \frac{5}{2}}  \lesssim  \|  u  \|_{3}  + \|  G^{4}\|_{\Sigma, \frac{5}{2}} \lesssim  \sqrt{\bar{\mathcal{D}}} + \sqrt{\mathcal{E}}\sqrt{\mathcal{D}}
\end{equation}
and
\begin{equation}\label{enhance_dissipation_4}
 \| \partial_{t}^{2} \eta \|_{\Sigma, \frac{1}{2}}  \leq  \| \partial_{t} u_{3} \|_{\Sigma, \frac{1}{2}} + \| \partial_{t} G^{4}\|_{\Sigma, \frac{1}{2}}  \lesssim  \| \partial_{t} u  \|_{1}  + \| \partial_{t} G^{4}\|_{\Sigma, \frac{1}{2}} 
\lesssim  \sqrt{\bar{\mathcal{D}}} + \sqrt{\mathcal{E}}\sqrt{\mathcal{D}} .
 \end{equation}

Now we complete the estimate of the pressure by obtaining a bound for  $\|p \|_{0}$.  To this end we combine the estimates \eqref{enhance_dissipation_1} and \eqref{enhance_dissipation_2} with  the Stokes estimate of \eqref{est_stokes} with $\phi =- \p_{t}u + G^{1}, \psi = G^{2},$ and $\alpha = (\eta  I - \sigma_{0} \Delta_{*} \eta ) e_{3} - \sigma_{0}^{\prime} \nabla_{*}c+ G^{3} e_{3}$ to bound
 \begin{eqnarray*}
 \| u \|_{3} + \| p \|_{2 } &\lesssim& \| - \p_{t}u + G^{1} \|_{1} + \| G^{2}\|_{2} 
 + \| (\eta  I - \sigma_{0} \Delta_{*} \eta ) e_{3} - \sigma_{0}^{\prime} \nabla_{*}c+ G^{3} e_{3} \|_{\Sigma, \frac{3}{2}}\\
 &\lesssim& \| \p_{t} u \|_{1} + \| G^{1} \|_{1} + \|  G^{2} \|_{2} + \| \eta \|_{\Sigma, \frac{7}{2}} + \| c \|_{\Sigma, \frac{5}{2}} + \| G^{3} \|_{\Sigma, \frac{3}{2}}\\
 &\lesssim& \sqrt{\bar{\mathcal{D}}} +  \sqrt{\mathcal{E}} \sqrt{\mathcal{D}}.
 \end{eqnarray*}
Thus
\begin{equation}\label{enhance_dissipation_5}
 \norm{p}_{2} \ls \sqrt{\bar{\D}} + \sqrt{\E} \sqrt{\D}.
\end{equation}

Finally, we turn to the $c$ terms in the dissipation. Write 
\begin{equation*}
\br{c} = \frac{1}{\abs{\Sigma}} \int_\Sigma c.
\end{equation*}
Then 
\begin{equation*}
 \norm{c}_{\Sigma,0} = \sqrt{ \ns{c-\br{c}}_{\Sigma,0} + \abs{\Sigma} \abs{\br{c}}^2 } \le \norm{c-\br{c}}_{\Sigma,0} + \frac{1}{\abs{\Sigma}} \abs{\int_\Sigma c}.
\end{equation*}
Using this, the Poincar\'e inequality, and Proposition \ref{nlin_ests_c}, we find that 
\begin{equation*}
 \norm{c}_{\Sigma,0} \ls \norm{\nab_\ast c}_{\Sigma,0} + \sqrt{\E} \sqrt{\D} \ls \sqrt{\bar{\D}} + \sqrt{\E} \sqrt{\D}.
\end{equation*}
On the other hand the fifth equation in \eqref{ns_perturbed} allows us to compute
\begin{equation}
 \int_{\Sigma} \dt c = \int_{\Sigma} G^5 + \Delta_* c - c_0 \diverge_* u = \int_{\Sigma} G^5,
\end{equation}
and so again the Poincar\'e inequality and Theorem \ref{nlin_ests_G} tell us that
 \begin{eqnarray*}
 \|\dt c \|_{\Sigma,0} &\le& \big\| \dt c - \br{\dt c} \big\|_{\Sigma,0} + \frac{1}{\abs{\Sigma}}\abs{\int_{\Sigma} G^5} \\
 &\lesssim& \|\nabla_{*} \dt c \|_{\Sigma, 0} + \norm{G^5}_{\Sigma,0} \\
 &\lesssim& \sqrt{\bar{\mathcal{D}}} +\sqrt{\E} \sqrt{\D}.
\end{eqnarray*}
Therefore, we obtain
\begin{equation}\label{enhance_dissipation_6}
 \| c \|_{\Sigma,3} + \| \dt c \|_{\Sigma,1}    \ls \| c \|_{\Sigma,0} + \|\dt c \|_{\Sigma,0} +  \sum_{\substack{\abs{\alpha}\le 2 \\\alpha_0 =0}}  \| \nab_* \p^\alpha c \|_{\Sigma,0}     +  \|\nab_* \dt c \|_{\Sigma,0}
 \ls \sqrt{\bar{\mathcal{D}}} +\sqrt{\E} \sqrt{\D}.
\end{equation}

Now to deduce \eqref{enhance_dissipation_0} we sum the squares of the estimates \eqref{enhance_dissipation_1}, \eqref{enhance_dissipation_2}, \eqref{enhance_dissipation_3}, \eqref{enhance_dissipation_4}, \eqref{enhance_dissipation_5}, and \eqref{enhance_dissipation_6}.

\end{proof}

\section{Proof of main results  }\label{sec_mains}

\subsection{Boundedness and decay  }

We now combine the estimates of the previous section in order to deduce our primary a priori estimate for solutions.  It shows that under a smallness condition on the energy and a finiteness condition for the integrated dissipation, the energy decays exponentially and the dissipation integral is bounded by the initial data.

\begin{thm}\label{aprioris}
Suppose that $(u,p,\eta,c)$ solves \eqref{ns_geometric} on the temporal interval $[0,T]$.  Let $\E$ and $\D$ be as defined in \eqref{def_E} and \eqref{def_D}. Then there exists a universal constant $0 < \delta_\ast < \delta$, where  $\delta \in (0,1)$ is the universal constant given in Lemma \ref{infinity_bounds}, such that if 
\begin{equation*}
 \sup_{0\le t \le T} \E(t) \le \delta_\ast \text{ and } \int_0^T \D(t) dt < \infty,
\end{equation*}
then
\begin{equation}\label{aprioris_0}
\sup_{0\le t \le T} e^{\lambda t} \E(t) + \int_0^T \D(t)dt \ls \E(0)
\end{equation}
for all $t \in [0,T]$, where $\lambda >0$ is a universal constant.
\end{thm}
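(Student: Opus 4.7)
The plan is to combine the three key ingredients from the preceding sections—the energy-dissipation inequality (Theorem~\ref{energy_ev}), the enhanced energy estimate (Theorem~\ref{enhance_energy}), and the enhanced dissipation estimate (Theorem~\ref{enhance_dissipation})—into a single closed scalar differential inequality, and then to integrate it. Throughout, I would choose the smallness parameter $\delta_\ast$ to be much smaller than the $\delta$ of Lemma~\ref{infinity_bounds} so that every constant produced by the cited theorems can be absorbed into a fixed, universal one.

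First, on $[0,T]$ the assumption $\E(t)\le\delta_\ast$ with $\delta_\ast$ small allows me to absorb the quadratic terms in Theorems~\ref{enhance_energy} and~\ref{enhance_dissipation}, yielding the pointwise equivalences $\E\lesssim\bar{\E}$ and $\D\lesssim\bar{\D}$. Using $\D\lesssim\bar{\D}$ on the right-hand side of \eqref{energy_ev_0} gives $\sqrt{\E}\,\D\lesssim\sqrt{\delta_\ast}\,\bar{\D}$, so after a further reduction of $\delta_\ast$ the energy-dissipation inequality reads
\begin{equation*}
\frac{d}{dt}\bigl(\bar{\E}-\F\bigr)+\tfrac{1}{2}\bar{\D}\le 0.
\end{equation*}
Next, the second bound in \eqref{nlin_ests_F_02} together with $\E\lesssim\bar{\E}$ yields $|\F|\lesssim\E^{3/2}\lesssim\sqrt{\delta_\ast}\,\bar{\E}$, so (shrinking $\delta_\ast$ once more) the modified energy satisfies $\tfrac{1}{2}\bar{\E}\le\bar{\E}-\F\le\tfrac{3}{2}\bar{\E}$; in particular $\bar{\E}-\F\ge 0$.

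The crucial coercivity step is the observation that, term by term in the definitions \eqref{def_E} and \eqref{def_D}, one has $H^{s+1}\hookrightarrow H^{s}$, hence $\E\le\D$ trivially. Combining this with $\bar{\E}\le\E$ and $\D\lesssim\bar{\D}$ gives $\bar{\E}\lesssim\bar{\D}$ with a universal constant, so there exists a universal $\lambda>0$ such that $2\lambda(\bar{\E}-\F)\le\tfrac{1}{2}\bar{\D}$. Feeding this into the displayed inequality above produces
\begin{equation*}
\frac{d}{dt}\bigl(\bar{\E}-\F\bigr)+2\lambda\bigl(\bar{\E}-\F\bigr)\le 0.
\end{equation*}
Gr\"onwall's inequality then yields $(\bar{\E}-\F)(t)\le e^{-2\lambda t}(\bar{\E}-\F)(0)$, and unwinding $\E\lesssim\bar{\E}\sim\bar{\E}-\F\lesssim\E$ on both endpoints gives the exponential decay $e^{\lambda t}\E(t)\lesssim\E(0)$ (with a little room to spare in the exponent). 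For the dissipation integral, I would integrate the pre-Gr\"onwall inequality $\tfrac{d}{dt}(\bar{\E}-\F)+\tfrac{1}{2}\bar{\D}\le 0$ directly from $0$ to $T$, use $(\bar{\E}-\F)(T)\ge 0$ to discard a good term, and invoke $\D\lesssim\bar{\D}$ once more to obtain $\int_0^T\D\,dt\lesssim\E(0)$.

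The only delicate point is the bookkeeping of the smallness threshold $\delta_\ast$: it must simultaneously be small enough for the absorptions in Theorems~\ref{enhance_energy} and~\ref{enhance_dissipation}, for the passage $\sqrt{\E}\,\D\to\sqrt{\delta_\ast}\,\bar{\D}$ in \eqref{energy_ev_0}, and for the comparison $\bar{\E}-\F\sim\bar{\E}$. Since each of these is a one-time reduction by a universal multiplicative constant, a single universal $\delta_\ast\in(0,\delta)$ suffices, and no bootstrap on $T$ is needed because the hypothesis $\sup_{[0,T]}\E\le\delta_\ast$ is assumed outright.
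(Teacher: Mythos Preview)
Your proposal is correct and follows essentially the same approach as the paper: both combine Theorems~\ref{enhance_energy}, \ref{enhance_dissipation}, and~\ref{energy_ev} to reduce to the inequality $\frac{d}{dt}(\bar{\E}-\F)+\tfrac12\bar{\D}\le 0$, use \eqref{nlin_ests_F_02} to show $\bar{\E}-\F\sim\bar{\E}$, invoke the trivial coercivity $\bar{\E}\le\E\ls\D\ls\bar{\D}$ for Gronwall, and integrate directly for the dissipation bound. The only difference is the order in which you derive the decay and the dissipation integral, which is immaterial since the two steps are independent.
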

\begin{proof}
According to Theorems \ref{enhance_energy} and \ref{enhance_dissipation} we have that
\begin{equation*}
\mathcal{E} \lesssim \bar{\mathcal{E}} + \mathcal{E}^{2} \text{ and } \D \ls \bar{\D} + \E \D.
\end{equation*}
Consequently, if we choose $\delta_\ast$ sufficiently small, then we may absorb the terms $\E^2$ and $\E \D$ onto the left to deduce that 
\begin{equation}\label{aprioris_1}
\bar{\E} \le  \E \ls \bar{\E} \text{ and } \bar{\D} \le \D \ls \bar{\D} \text{ on } [0,T].
\end{equation}

Next we invoke Theorem \ref{energy_ev}, which tells us that on $[0,T]$ we have the inequality
\begin{equation*}
 \frac{d}{dt} \left(\bar{\E} -\mathcal{F} \right) +  \bar{\mathcal{D}}  \lesssim     \sqrt{\mathcal{E}}  \mathcal{D} \ls \sqrt{\E} \bar{\D},
\end{equation*}
where the last inequality follows from \eqref{aprioris_1}.  Upon further restricting $\delta_\ast$ if necessary we may absorb $\sqrt{\E} \bar{\D}$ onto the left to deduce that 
\begin{equation*}
 \frac{d}{dt} \left(\bar{\E} -\mathcal{F} \right) + \hal  \bar{\mathcal{D}}  \le 0,
\end{equation*}
which when combined with the second bound in \eqref{aprioris_1} implies that 
\begin{equation}\label{aprioris_2}
\frac{d}{dt} \left(\bar{\E} -\mathcal{F} \right) + C  \D  \le 0
\end{equation}
on $[0,T]$, for $C>0$ a universal constant.

Now we turn our attention to $\mathcal{F}$.  The second estimate in \eqref{nlin_ests_F_02} of Theorem \ref{nlin_ests_F}, together with \eqref{aprioris_1}, tell us that 
\begin{equation*}
 \abs{\mathcal{F}} \ls \E^{3/2} \ls \bar{\E} \sqrt{\E},
\end{equation*}
and so if we further restrict $\delta_\ast$ we may conclude that 
\begin{equation}\label{aprioris_3}
 \hal \bar{\E} \le \bar{\E} -\mathcal{F}  \le \frac{3}{2} \bar{\E}
\end{equation}
on $[0,T]$.  In particular this tells us that $\bar{\E} - \mathcal{F} \ge 0$.

We may then integrate \eqref{aprioris_2} in time to deduce that 
\begin{equation*}
 C\int_0^T \D(t) dt \le (\bar{\E}(T) - \mathcal{F}(T)) + C \int_0^T \D(t) dt \le (\bar{\E}(0) - \mathcal{F}(0)),
\end{equation*}
from which we deduce that 
\begin{equation}\label{aprioris_4}
 \int_0^T \D(t) dt \ls \E(0).
\end{equation}
 
On the other hand, we have the obvious bound $\bar{\E} \ls \D$, and so \eqref{aprioris_3} implies that 
\begin{equation*}
 0 \le \bar{\E} - \F \ls \bar{\E} \ls \D,
\end{equation*}
and hence \eqref{aprioris_2} tells us that 
\begin{equation*}
\frac{d}{dt} \left(\bar{\E} -\mathcal{F} \right) + \lambda  \left(\bar{\E} -\mathcal{F} \right)  \le 0
\end{equation*}
for some universal constant $\lambda>0$.  Gronwall's inequality and \eqref{aprioris_3} then imply that 
\begin{equation*}
 \bar{\E}(t) \ls (\bar{\E}(t) - \mathcal{F}(t)) \ls e^{-\lambda t} (\bar{\E}(0) - \mathcal{F}(0)) \ls e^{-\lambda t} \E(0)
\end{equation*}
for all $t \in [0,T]$, and hence 
\begin{equation}\label{aprioris_5}
\sup_{0\le t \le T} e^{\lambda t} \E(t) \ls \E(0). 
\end{equation}

Now to conclude that the estimate \eqref{aprioris_0} holds we simply sum \eqref{aprioris_4} and  \eqref{aprioris_5}.

\end{proof}

\subsection{Global well-posedness  }

We now couple to the local well-posedness to produce global-in-time solutions that decay to equilibrium exponentially fast.

\begin{proof}[Proof of Theorem \ref{gwp_intro}]

First note that given $u_0, \eta_0$, and $\tilde{c}_0$, in the local existence result, Theorem \ref{lwp_intro}, we construct the remaining initial data $\dt u(\cdot,0)$, $\dt \eta(\cdot,0)$, $\dt c(\cdot,0)$, and $p(\cdot,0)$ in such a way that 
\begin{equation}\label{gwp_intro_1}
 \mathcal{E}(0) \le C_0 \left( \ns{u_0}_{H^2(\Omega)} + \ns{\eta_0}_{H^3(\Sigma)} + \ns{\tilde{c}_0-c_0}_{H^2(\Sigma)}  \right)
\end{equation}
for some universal constant $C_0 >0$.  

Let $T=1$ and choose $\delta_\ast >0$ as in Theorem \ref{aprioris}.  Choose $\kappa>0$ as in Theorem \ref{lwp_intro} and let $C_1 >0$ denote the universal constant appearing on the right side of \eqref{lwp_intro_0}.  Also let $C_2 >0$ be the universal constant appearing on the right side of \eqref{aprioris_0} and $\lambda >0$ be the universal constant appearing on the left.  Set 
\begin{equation*}
 \kappa_\ast = \frac{1}{(1+C_0)(1+C_1)(1+C_2)}\min\{ \kappa, \delta_\ast \}
\end{equation*}
and assume that \eqref{gwp_intro_00} is satisfied with $\kappa_\ast$.

Due to \eqref{gwp_intro_1}, the unique solution on $[0,1]$ produced by Theorem \ref{lwp_intro} then satisfies 
\begin{equation*}
 \sup_{0\le t \le 1} \E(t) + \int_0^1 \D(t)dt  +
  \int_0^1 \ns{\dt^2 c(t)}_{H^{-1}(\Sigma)}dt + \ns{\dt^{2N+1} u}_{(\mathcal{X}_1)^*} 
 \le C_1 \E(0) \le C_0 C_1 \kappa_\ast \le \delta_\ast.
\end{equation*}
Consequently, we may apply Theorem \ref{aprioris} to see that 
\begin{equation*}
\sup_{0\le t \le 1} e^{\lambda t} \E(t) + \int_0^1 \D(t)dt \le C_2 \E(0) \le C_0 C_2 \kappa_\ast
\end{equation*}
which in particular means that 
\begin{equation}\label{gwp_intro_2}
 \E(1) \le e^{-\lambda} C_0 C_2 \kappa_\ast \le \kappa.
\end{equation}

Due to \eqref{gwp_intro_2} we may apply Theorem \ref{lwp_intro} with initial data $u(\cdot,1),\eta(\cdot,1)$, etc, to uniquely extend the solution to $[1,2]$ in such a way that 
\begin{equation*}
 \sup_{1\le t \le 2} \E(t) + \int_1^2 \D(t)dt     + \int_1^2 \ns{\dt^2 c(t)}_{H^{-1}(\Sigma)}dt + \ns{\dt^{2N+1} u}_{(\mathcal{X}_{1,2})^*} \le C_1 \E(1) \le e^{-\lambda} C_0 C_1 C_2 \kappa_\ast \le \delta_\ast,
\end{equation*}
where $\mathcal{X}_{a,b}$ means \eqref{X_space_def} with the temporal interval replaced with $[a,b]$ in place of $[0,T]$.  We may then apply the a priori estimate of Theorem \ref{aprioris} to see that 
\begin{equation*}
\sup_{0\le t \le 2} e^{\lambda t} \E(t) + \int_0^2 \D(t)dt \le C_2 \E(0) \le C_0 C_2 \kappa_\ast
\end{equation*}
and hence that
\begin{equation*}
 \E(2) \le e^{-2 \lambda} C_0 C_2 \kappa_\ast.
\end{equation*}

We may continue iterating the above argument to ultimately deduce that the solution exists on $[0,\infty)$ and obeys the estimate \eqref{gwp_intro_01}.

\end{proof}

\appendix

\section{Surface differential operators }\label{app_surf}

\subsection{Basics }

Given a vector $X \in \mathbb{R}^3$ we write $X_\ast \in \mathbb{R}^2$ for its horizontal component, i.e. $X_\ast = X_1 e_1 + X_2 e_2$.  In same vein we write 
\begin{equation*}
 \nab_\ast f = \p_1 f e_1 + \p_2 f e_2  
\end{equation*}
for the ``horizontal'' gradient.  We also write 
\begin{equation*}
 \diverge_\ast X = \p_1 X_1 + \p_2 X_2
\end{equation*}
for the horizontal divergence operator.

The unit normal on $\Gamma(t)$ is defined via
\begin{equation*}
 \nu = \frac{1}{\sqrt{1+\abs{\nab_\ast \eta}}} (-\nab_\ast \eta,1).
\end{equation*}
We define the differential operator $\nab_\Gamma$ via $\nab_\Gamma f = \p_{\Gamma,i} f e_i$, where we define 
\begin{equation*}
 \p_{\Gamma,i} = 
\begin{cases}
\p_i - \nu_i (\nu_\ast \cdot \nab_\ast ) & \text{if } i=1,2 \\
- \nu_3 (\nu_\ast \cdot \nab_\ast )& \text{if } i=3.
\end{cases}
\end{equation*}
For a vector field $X : \Sigma \to \mathbb{R}^3$ we set 
\begin{equation*}
 \diverge_\Gamma X = \p_{\Gamma,i} X_i.
\end{equation*}

Suppose now that $f: \Gamma(t) \to \mathbb{R}$ and $X: \Gamma(t) \to \mathbb{R}^3$.  Let  $D_{\Gamma}$ denotes the intrinsic surface gradient: $D_\Gamma f : \Gamma(t) \to \mathbb{R}^3$ such that $D_\Gamma f(x)$ is perpendicular to $T_x \Gamma(t)$ for each $x \in \Gamma(t)$.  Also let $\text{Div}_\Gamma X$ denote the intrinsic surface divergence of $X$.  These quantities are related to the above defined ones as follows.  If we write
\begin{equation*}
 f \circ \eta = f(x_\ast, \eta(x_\ast,t)) \text{ and } X \circ \eta = X(x_\ast,\eta(x_\ast,t))
\end{equation*}
then $f \circ \eta: \Sigma \to \mathbb{R}$ and $ X\circ \eta : \Sigma \to \mathbb{R}^3$, and 
\begin{equation*}
 D_\Gamma f (x_\ast,\eta(x_\ast,t)) = \nab_\Gamma ( f \circ \eta) (x_\ast) \text{ and } \text{Div}_\Gamma X (x_\ast,\eta(x_\ast,t)) = \diverge_\Gamma X (x_\ast) \text{ for all } x_\ast \in \Sigma.
\end{equation*}
In other words, $\nab_\Gamma$ and $\diverge_\Gamma$ are the manifestations of the surface gradient and divergence when functions and vector fields are pulled back to $\Sigma$.

The operators $D_\Gamma$ and $\text{Div}_\Gamma$ are known to satisfy a number of useful identities.  Here we record the versions of these identities for $\nab_\Gamma$ and $\diverge_\Gamma$.  We begin with some preliminary calculations.

\begin{lem}\label{surf_idents}
We have the following identities:
\begin{equation}\label{surf_idents_01}
 \diverge_\Gamma \nu = \p_1 \nu_1 + \p_2 \nu_2 = \diverge_\ast \nu_\ast= - \diverge_\ast \left(\frac{\nab_\ast \eta}{\sqrt{1+\abs{\nab_\ast \eta}^2}}  \right) = - H,
\end{equation}
\begin{equation}\label{surf_idents_02}
 \p_i \sqrt{1+\abs{\nab_\ast \eta}^2 } = - \nu_\ast \cdot \nab_\ast \p_i \eta,
\end{equation}
\begin{equation}\label{surf_idents_03}
 \p_t \sqrt{1+\abs{\nab_\ast \eta}^2 } = \diverge_\ast \left(\dt \eta \frac{\nab_\ast \eta}{\sqrt{1+\abs{\nab_\ast \eta}^2}} \right) - \dt \eta H,
\end{equation}
and 
\begin{equation}\label{surf_idents_04}
\nab_\Gamma f \cdot\nu =0.
\end{equation}
\end{lem}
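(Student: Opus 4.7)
The plan is to verify each of the four identities by direct computation, using only the definitions of $\nab_\Gamma$, $\diverge_\Gamma$, and $\nu$ given in the appendix together with the mean curvature formula \eqref{mean_curvature}. None of these identities should require deep tools; the only consistent trick is to exploit $\abs{\nu}^2 = 1$ and hence $\nu \cdot \p_i \nu = 0$.

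For \eqref{surf_idents_01}, I would expand $\diverge_\Gamma \nu = \p_{\Gamma,i}\nu_i$ by cases: for $i=1,2$ it contributes $\p_i \nu_i - \nu_i (\nu_\ast \cdot \nab_\ast \nu_i)$ and for $i=3$ only $-\nu_3(\nu_\ast \cdot \nab_\ast \nu_3)$. Summing yields
\[
\diverge_\Gamma \nu = \diverge_\ast \nu_\ast - \nu_i (\nu_\ast \cdot \nab_\ast \nu_i) = \diverge_\ast \nu_\ast - \tfrac{1}{2} \nu_\ast \cdot \nab_\ast(\abs{\nu}^2) = \diverge_\ast \nu_\ast,
\]
since $\abs{\nu}^2 \equiv 1$. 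Plugging in $\nu_\ast = -\nab_\ast \eta / \sqrt{1+\abs{\nab_\ast \eta}^2}$ and invoking \eqref{mean_curvature} then gives the final equality $\diverge_\ast \nu_\ast = -H$.

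For \eqref{surf_idents_02} I would differentiate under the square root and then recognize $\nab_\ast \eta/\sqrt{1+\abs{\nab_\ast \eta}^2} = -\nu_\ast$. The temporal identity \eqref{surf_idents_03} follows from the same observation applied to $\dt$ combined with a product-rule expansion of the right-hand side:
\[
\diverge_\ast\!\left(\dt\eta\,\tfrac{\nab_\ast\eta}{\sqrt{1+\abs{\nab_\ast\eta}^2}}\right) = \tfrac{\nab_\ast\eta \cdot \nab_\ast \dt\eta}{\sqrt{1+\abs{\nab_\ast\eta}^2}} + \dt\eta\,\diverge_\ast\!\left(\tfrac{\nab_\ast\eta}{\sqrt{1+\abs{\nab_\ast\eta}^2}}\right),
\]
and the last term equals $\dt\eta\,H$ by \eqref{mean_curvature}, so moving it to the left-hand side produces exactly $\p_t\sqrt{1+\abs{\nab_\ast\eta}^2}$.

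Finally, for \eqref{surf_idents_04} I would expand $\nab_\Gamma f \cdot \nu = \p_{\Gamma,i} f\, \nu_i$ as
\[
\sum_{i=1,2}\bigl[\p_i f - \nu_i (\nu_\ast \cdot \nab_\ast f)\bigr]\nu_i - \nu_3^2(\nu_\ast \cdot \nab_\ast f) = \nu_\ast \cdot \nab_\ast f - \abs{\nu}^2(\nu_\ast \cdot \nab_\ast f) = 0,
\]
again using $\abs{\nu}^2 = 1$. There is no serious obstacle here; the only thing requiring care is bookkeeping of the split definition of $\p_{\Gamma,i}$ for $i \in \{1,2\}$ versus $i=3$, and consistent application of the unit-normal constraint $\abs{\nu}^2 = 1$ (and its derivative $\nu \cdot \nab_\ast \nu = 0$) to collapse the cross terms.
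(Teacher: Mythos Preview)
Your proposal is correct and follows essentially the same route as the paper's proof: both arguments expand the surface operators according to the split definition of $\p_{\Gamma,i}$, use $\abs{\nu}^2=1$ (and hence $\nu_\ast \cdot \nab_\ast \abs{\nu}^2 = 0$) to kill the cross terms in \eqref{surf_idents_01} and \eqref{surf_idents_04}, and verify \eqref{surf_idents_02}--\eqref{surf_idents_03} by direct differentiation of $\sqrt{1+\abs{\nab_\ast\eta}^2}$. If anything, your treatment of \eqref{surf_idents_03} is more explicit than the paper's, which simply says it ``follows from a similar computation.''
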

\begin{proof}
To prove \eqref{surf_idents_01} we first use the  definition of $\diverge_\Gamma$ to write
\begin{equation*}
\diverge_\Gamma \nu = \p_1 \nu_1 + \p_2 \nu_2  - \nu_i (\nu_\ast \cdot \nab_\ast \nu_i).
\end{equation*}
We have that $\abs{\nu}^2=1$ on $\Sigma$, so 
\begin{equation*}
 0 = \nu_\ast \cdot \nab_\ast 1 = \nu_\ast \cdot \nab_\ast \abs{\nu}^2 = 2 \nu_i \nu_\ast \cdot \nab_\ast \nu_i,
\end{equation*}
Upon combining these two calculations we deduce \eqref{surf_idents_01}.

For \eqref{surf_idents_02} we compute
\begin{equation*}
 \p_i \sqrt{1+\abs{\nab_\ast \eta}^2 } = \frac{\p_j \eta}{\sqrt{1+\abs{\nab_\ast \eta}^2 }} \p_j \p_i \eta = - \nu_j \p_j \p_i \eta =- \nu_\ast \cdot \nab_\ast \p_i \eta.
\end{equation*}
The identity \eqref{surf_idents_03} follows from a similar computation.  

The identity \eqref{surf_idents_04} follows from the fact that $\nab_\Gamma f = \nab_\ast f - \nu (\nu_\ast \cdot \nab_\ast f)$, and hence 
\begin{equation*}
 \nab_\Gamma f \cdot \nu = \nab_\ast f \cdot \nu - \abs{\nu}^2 (\nu_\ast \cdot \nab_\ast f) = \nu_\ast \cdot \nab_\ast f - \nu_\ast \cdot \nab_\ast f =0.
\end{equation*}
\end{proof}

Next we record the key integration by parts identities.

\begin{prop}\label{surf_ibp}
We have the following identities for $f,g : \Sigma \to \mathbb{R}$ and $X : \Sigma \to \mathbb{R}^3$:
\begin{equation}\label{surf_ibp_01}
 \int_{\Sigma} \p_{\Gamma,i} f g \sqrt{1+\abs{\nab_\ast \eta}^2 } =  \int_{\Sigma} -\left( f \p_{\Gamma,i} g + fg \nu_i H  \right) \sqrt{1+\abs{\nab_\ast \eta}^2 }
\end{equation}
and 
\begin{equation}\label{surf_ibp_02}
 \int_\Sigma \diverge_\Gamma X \sqrt{1+\abs{\nab_\ast \eta}^2 } = \int_\Sigma - X \cdot \nu  H \sqrt{1+\abs{\nab_\ast \eta}^2 }.
\end{equation}
\end{prop}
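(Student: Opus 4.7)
The plan is to prove the divergence identity \eqref{surf_ibp_02} first and then obtain the integration-by-parts identity \eqref{surf_ibp_01} as an essentially immediate corollary. Indeed, \eqref{surf_ibp_01} is nothing more than \eqref{surf_ibp_02} applied to the vector field $X = fg\,e_i$: by the Leibniz rule for the first-order differential operator $\p_{\Gamma,i}$ one has $\diverge_\Gamma(fg\,e_i) = \p_{\Gamma,j}(fg\,\delta_{ij}) = f\,\p_{\Gamma,i}g + g\,\p_{\Gamma,i}f$, while $X\cdot\nu = fg\,\nu_i$, so substituting into \eqref{surf_ibp_02} and moving one of the two product-rule terms to the other side produces \eqref{surf_ibp_01}. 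This handles all $i \in \{1,2,3\}$ uniformly.

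For \eqref{surf_ibp_02} itself I would carry out a direct calculation. Writing $J_\ast = \sqrt{1+|\nab_\ast\eta|^2}$ for brevity, I begin from the definition
\[
\diverge_\Gamma X = \p_1 X_1 + \p_2 X_2 - \nu_i(\nu_\ast\cdot\nab_\ast) X_i.
\]
The key algebraic observation is that the identity $\nu_k J_\ast = -\p_k\eta$ for $k=1,2$ (immediate from $\nu = J_\ast^{-1}(-\nab_\ast\eta,1)$) allows me, upon multiplying by $J_\ast$, to eliminate every $\nu_\ast$ factor and arrive at a purely horizontal expression in $X$, $\eta$, and their derivatives. Integrating by parts on the torus $\Sigma$ (which contributes no boundary terms) and using \eqref{surf_idents_02} to handle $\p_j J_\ast = -\nu_\ast\cdot\nab_\ast\p_j\eta$ produces a combination of terms of the form $X_i\,\p_k\nu_i\,\p_k\eta$, $X_i\,\nu_i\,\p_k^2\eta$, and $X_j\,\nu_k\,\p_k\p_j\eta$ (with $j,k$ horizontal indices). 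Expanding $\p_k\nu_i$ explicitly via $\nu_i = -\p_i\eta/J_\ast$ for $i=1,2$ and $\nu_3 = 1/J_\ast$, the tangential contributions cancel pairwise while the remaining terms reassemble into $-(X\cdot\nu)HJ_\ast$, after invoking the algebraic consequence $J_\ast\,\diverge_\ast\nu_\ast = -\p_k^2\eta + \nu_k\nu_\ell\p_\ell\p_k\eta$ of $\nu_k J_\ast = -\p_k\eta$ together with $H = -\diverge_\ast\nu_\ast$ from \eqref{surf_idents_01}.

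A conceptually cleaner alternative is to observe that $\p_{\Gamma,i}$ and $\diverge_\Gamma$ on $\Sigma$ are the pullbacks under the graph parametrization $x_\ast \mapsto (x_\ast,\eta(x_\ast))$ of the intrinsic surface operators on $\Gamma(t)$, and that $J_\ast\,dx_\ast$ is the intrinsic surface area element. With these identifications in hand, \eqref{surf_ibp_02} reduces to the standard intrinsic divergence theorem on the closed surface $\Gamma(t)$, which one proves by splitting $X = X^T + (X\cdot\nu)\nu$: the tangential part integrates to zero by the classical Stokes theorem on a closed manifold, while the normal part yields $-(X\cdot\nu)H$ via \eqref{surf_idents_01} (giving $\mathrm{Div}_\Gamma\nu = -H$) and \eqref{surf_idents_04} (giving $\nab_\Gamma\phi\cdot\nu = 0$). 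The main obstacle, in either route, is purely algebraic bookkeeping: assembling the many derivative combinations of $\nu$, $\eta$, and $J_\ast$ and verifying that they cancel in exactly the right way to expose the mean-curvature factor on the right-hand side of \eqref{surf_ibp_02}. Once the substitutions dictated by Lemma \ref{surf_idents} are systematically carried out, no analytic difficulty remains.
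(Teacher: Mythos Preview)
Your proposal is correct, but it inverts the logical order used in the paper. The paper proves the scalar identity \eqref{surf_ibp_01} directly --- splitting into the cases $i\in\{1,2\}$ and $i=3$, integrating by parts in $x_\ast$, and invoking Lemma~\ref{surf_idents} to collapse the residual terms --- and then obtains \eqref{surf_ibp_02} by summing over $i$. You instead establish the divergence identity \eqref{surf_ibp_02} first (either by the same sort of horizontal integration by parts or by recognizing it as the intrinsic divergence theorem on the closed surface $\Gamma(t)$) and then recover \eqref{surf_ibp_01} by specializing to $X=fg\,e_i$ and the Leibniz rule. Your route handles all three components uniformly and the intrinsic-geometry shortcut is conceptually cleaner; the paper's route is more hands-on but avoids any appeal to the Riemannian divergence theorem, staying entirely within the elementary identities of Lemma~\ref{surf_idents}. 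Either direction of the implication is immediate, so the two proofs are essentially reorderings of the same computation.
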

\begin{proof}
The identity \eqref{surf_ibp_02} follows immediately from \eqref{surf_ibp_01}, so it suffices to prove \eqref{surf_ibp_01}.

Assume initially that $i=1,2$.   Standard integration by parts reveals that
\begin{multline*} 
\int_\Sigma \p_{\Gamma,i} f g  \sqrt{1+\abs{\nab_\ast \eta}^2 } = -\int_\Sigma - f \p_i g \sqrt{1+\abs{\nab_\ast \eta}^2 } - fg \p_i \sqrt{1+\abs{\nab_\ast \eta}^2 } + \diverge_\ast \left(\nu_\ast \nu_i g   \sqrt{1+\abs{\nab_\ast \eta}^2 }\right) \\
=-\int_\Sigma f \p_{\Gamma,i} g \sqrt{1+\abs{\nab_\ast \eta}^2 } + fg \left[(\diverge_\ast \nu_\ast) \nu_i \sqrt{1+\abs{\nab_\ast \eta}^2} +   \nu_\ast \cdot \nab_\ast \left( -\p_i \eta  \right) - \p_i  \sqrt{1+\abs{\nab_\ast \eta}^2 }  \right] \\
=-\int_\Sigma (f \p_{\Gamma,i} g  + fg \nu_i H)\sqrt{1+\abs{\nab_\ast \eta}^2 },
\end{multline*}
where in the last line we have used the identities of Lemma \ref{surf_idents}.  This proves  \eqref{surf_ibp_01}  when $i=1,2$.

Now assume that $i=3$.  Since $\p_{\Gamma,3} = -\nu_3 (\nu_\ast \cdot \nab_\ast)$ we may then compute 
\begin{multline*}
 \int_\Sigma \p_{\Gamma,3} f g  \sqrt{1+\abs{\nab_\ast \eta}^2 } = \int_\Sigma f \diverge_\ast \left(\nu_\ast \nu_3 g \sqrt{1+\abs{\nab_\ast \eta}^2 } \right) = \int_\Sigma -f \p_{\Gamma,3} g \sqrt{1+\abs{\nab_\ast \eta}^2 } \\ + \int_\Sigma fg \diverge_\ast \left(\nu_\ast \nu_3 \sqrt{1+\abs{\nab_\ast \eta}^2 } \right) 
 = \int_\Sigma -f \p_{\Gamma,3} g \sqrt{1+\abs{\nab_\ast \eta}^2 } + \int_\Sigma fg \diverge_\ast \left(\nu_\ast  \right) \\
 =  \int_\Sigma - (f \p_{\Gamma,3} g + fg \nu_3 H ) \sqrt{1+\abs{\nab_\ast \eta}^2 }.
\end{multline*}
This proves \eqref{surf_ibp_01} when $i=3$.
\end{proof}

\subsection{A PDE identity }

Here we record an important identity for solutions to certain PDEs.

\begin{prop}\label{surf_c_ev}
Let $f \in C^2(\mathbb{R})$.  Suppose that $\tilde{c}$ and $\eta$ satisfy 
\begin{equation*}
\begin{cases}
\dt \tilde{c}  +u \cdot\nab_\ast \tilde{c} + \tilde{c} \diverge_\Gamma u = \gamma \Delta_\Gamma \tilde{c} \\
\dt \eta = u\cdot \nu \sqrt{1+\abs{\nab_\ast \eta}^2}.
\end{cases}
\end{equation*}
Then 
\begin{equation*}
\frac{d}{dt} \int_\Sigma f(\tilde{c}) \sqrt{1+\abs{\nab_\ast \eta}^2} = \int_\Sigma \left((f(\tilde{c}) - f'(\tilde{c}) \tilde{c} ) \diverge_\Gamma u - f''(\tilde{c}) \abs{\nab_\Gamma \tilde{c}}^2 \right)  \sqrt{1+\abs{\nab_\ast \eta}^2}
\end{equation*}
\end{prop}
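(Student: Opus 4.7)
The plan is to differentiate under the integral sign via the product rule, substitute the two PDEs, integrate by parts carefully on the free surface $\Sigma$, and exploit several cancellations. Concretely, I would begin by writing
\begin{equation*}
\frac{d}{dt}\int_\Sigma f(\tilde c)\sqrt{1+|\nab_\ast \eta|^2}\,dx_\ast = \underbrace{\int_\Sigma f'(\tilde c)\,\dt \tilde c\sqrt{1+|\nab_\ast \eta|^2}}_{=:\mathrm{I}} + \underbrace{\int_\Sigma f(\tilde c)\,\dt \sqrt{1+|\nab_\ast \eta|^2}}_{=:\mathrm{II}}.
\end{equation*}

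For $\mathrm{I}$, I would substitute the $\tilde c$-equation $\dt\tilde c = -u\cdot\nab_\ast \tilde c - \tilde c\,\diverge_\Gamma u + \gamma\Delta_\Gamma \tilde c$. The diffusion term is handled by writing $\Delta_\Gamma \tilde c = \p_{\Gamma,i}(\nab_\Gamma \tilde c)_i$ and applying the integration by parts identity \eqref{surf_ibp_01} of Proposition \ref{surf_ibp}; the boundary-curvature term in that formula vanishes because $(\nab_\Gamma\tilde c)\cdot\nu = 0$ by \eqref{surf_idents_04}, producing $-\gamma\int_\Sigma f''(\tilde c)|\nab_\Gamma\tilde c|^2\sqrt{1+|\nab_\ast \eta|^2}$. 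The convective piece I would rewrite using the algebraic identity $u\cdot\nab_\ast \tilde c = u\cdot\nab_\Gamma \tilde c + (u\cdot\nu)(\nu_\ast\cdot\nab_\ast \tilde c)$, which follows directly from the definition of $\p_{\Gamma,i}$.

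For $\mathrm{II}$, I would apply identity \eqref{surf_idents_03} to get $\dt\sqrt{1+|\nab_\ast\eta|^2} = \diverge_\ast(\dt\eta\,\nab_\ast\eta/\sqrt{1+|\nab_\ast\eta|^2}) - \dt\eta\,H$, integrate by parts in the divergence term, and substitute the kinematic boundary condition $\dt\eta = (u\cdot\nu)\sqrt{1+|\nab_\ast\eta|^2}$. This produces two terms: one proportional to $f'(\tilde c)(u\cdot\nu)(\nab_\ast\eta\cdot\nab_\ast\tilde c)$ and one of the form $-\int f(\tilde c)(u\cdot\nu)H\sqrt{1+|\nab_\ast\eta|^2}$. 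The first of these is designed to cancel exactly the leftover piece $(u\cdot\nu)(\nu_\ast\cdot\nab_\ast\tilde c)$ coming from the decomposition in $\mathrm{I}$ (after using $\nu_\ast = -\nab_\ast\eta/\sqrt{1+|\nab_\ast\eta|^2}$).

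Finally, to dispose of the remaining $-\int f'(\tilde c) u\cdot\nab_\Gamma\tilde c\sqrt{1+|\nab_\ast\eta|^2}$ and the curvature term $-\int f(\tilde c)(u\cdot\nu)H\sqrt{1+|\nab_\ast\eta|^2}$, I would apply \eqref{surf_ibp_01} once more, this time to $\p_{\Gamma,i}(f(\tilde c)) u_i = f'(\tilde c)\,u\cdot\nab_\Gamma\tilde c$; this yields $-\int f(\tilde c)\,\diverge_\Gamma u\sqrt{1+|\nab_\ast\eta|^2}$ together with a $-\int f(\tilde c)(u\cdot\nu)H\sqrt{1+|\nab_\ast\eta|^2}$ that cancels with the curvature term from $\mathrm{II}$. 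Combining this with the $-f'(\tilde c)\tilde c\,\diverge_\Gamma u$ term from $\mathrm{I}$ gives the coefficient $f(\tilde c) - f'(\tilde c)\tilde c$ on $\diverge_\Gamma u$, and the identity follows.

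The principal obstacle is bookkeeping: there are three separate integrations by parts on $\Sigma$ and several mean-curvature terms that must cancel pairwise. The conceptual step that makes the cancellations work is recognizing the splitting $u\cdot\nab_\ast\tilde c = u\cdot\nab_\Gamma\tilde c + (u\cdot\nu)(\nu_\ast\cdot\nab_\ast\tilde c)$ — this is what converts the horizontal convection in the PDE into a genuinely tangential surface convection, so that the surface integration by parts formula can be applied and the metric evolution term $\dt\sqrt{1+|\nab_\ast\eta|^2}$ is exactly absorbed.
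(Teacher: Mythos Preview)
Your proposal is correct and follows essentially the same approach as the paper: split into the $\mathrm{I}$ and $\mathrm{II}$ pieces, use Proposition~\ref{surf_ibp} on the diffusion and tangential-convection terms, use identity~\eqref{surf_idents_03} and the kinematic condition for $\mathrm{II}$, and cancel the mean-curvature contributions. The only organizational difference is that the paper first rewrites $f'(\tilde c)\,\dt\tilde c$ via the chain rule as $-u\cdot\nab_\ast f(\tilde c) - f(\tilde c)\diverge_\Gamma u + \gamma\Delta_\Gamma f(\tilde c)$ plus the ``good'' remainder, and then shows in one block that the first three terms together with $\mathrm{II}$ integrate to zero, whereas you track the individual cancellations term by term; the underlying integrations by parts and the key splitting $u\cdot\nab_\ast\tilde c = u\cdot\nab_\Gamma\tilde c + (u\cdot\nu)(\nu_\ast\cdot\nab_\ast\tilde c)$ are identical.
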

\begin{proof}
We begin by computing 
\begin{equation*}
\frac{d}{dt}\int_{\Sigma} f(\tilde{c})  \sqrt{1+\abs{\nab_\ast \eta}^2} = \int_\Sigma f'(\tilde{c}) \dt \tilde{c}  \sqrt{1+\abs{\nab_\ast \eta}^2} + f(\tilde{c}) \dt  \sqrt{1+\abs{\nab_\ast \eta}^2} := I + II.
\end{equation*}

Note that 
\begin{equation*}
 \Delta_\Gamma f(\tilde{c}) = \diverge_\Gamma( \nab^\gamma f(\tilde{c})) = \diverge_\Gamma (f'(\tilde{c}) \nab_\Gamma \tilde{c}) = f'(\tilde{c}) \Delta_\Gamma \tilde{c} + f''(\tilde{c}) \abs{\nab_\Gamma \tilde{c}}^2.
\end{equation*}
Using this, we may rewrite 
\begin{multline*}
f'(\tilde{c})  \dt \tilde{c} = f'(\tilde{c}) \left( -u \cdot\nab_\ast \tilde{c} - \tilde{c} \diverge_\Gamma u + \gamma \Delta_\Gamma \tilde{c} \right)
= - u \cdot \nab_\ast f(\tilde{c}) - f(\tilde{c}) \diverge_\Gamma u + \gamma \Delta_\Gamma f(\tilde{c}) \\
+ (f(\tilde{c}) - f'(\tilde{c}) \tilde{c} ) \diverge_\Gamma u - \gamma f''(\tilde{c}) \abs{\nab_\Gamma \tilde{c}}^2.
\end{multline*}
Consequently, we may rewrite $I= I_1 + I_2$ for 
\begin{equation*}
 I_1 = \int_\Sigma \left(- u \cdot \nab_\ast f(\tilde{c}) - f(\tilde{c}) \diverge_\Gamma u + \gamma \Delta_\Gamma f(\tilde{c}) \right) \sqrt{1+\abs{\nab_\ast \eta}^2}
\end{equation*}
and 
\begin{equation*}
 I_2 = \int_\Sigma \left( (f(\tilde{c}) - f'(\tilde{c}) \tilde{c} ) \diverge_\Gamma u - \gamma f''(\tilde{c}) \abs{\nab_\Gamma \tilde{c}}^2\right)\sqrt{1+\abs{\nab_\ast \eta}^2}.
\end{equation*}
Thus, to complete the proof it suffices to show that $I_1 + II =0$.

To this end we first use Proposition \ref{surf_ibp} to compute
\begin{equation*}
 \int_\Sigma \gamma \Delta_\Gamma f(\tilde{c})   \sqrt{1+\abs{\nab_\ast \eta}^2} =  \int_\Sigma -\gamma \nab_\Gamma f(\tilde{c}) \cdot \nu H   \sqrt{1+\abs{\nab_\ast \eta}^2} =0
\end{equation*}
since Lemma \ref{surf_idents} tells us that $\nab_\Gamma f(\tilde{c}) \cdot \nu =0$.  Similarly, 
\begin{equation*}
 \int_\Sigma  -f(\tilde{c}) \diverge_\Gamma u \sqrt{1+\abs{\nab_\ast \eta}^2} =  \int_\Sigma  \left( u \cdot \nab_\Gamma f(\tilde{c}) + f(\tilde{c}) u\cdot \nu H  \right) \sqrt{1+\abs{\nab_\ast \eta}^2},
\end{equation*}
and hence
\begin{equation*}
 I_1 = \int_\Sigma \left(- u \cdot \nab_\ast f(\tilde{c}) + u \cdot \nab_\Gamma f(\tilde{c}) + f(\tilde{c})  u\cdot \nu H  \right) \sqrt{1+\abs{\nab_\ast \eta}^2}.
\end{equation*}

On the other hand, \eqref{surf_idents_03} and the equality $\dt \eta = u\cdot \nu \sqrt{1+ \abs{\nab_\ast \eta}^2}$ tell us that
\begin{equation*}
 II = \int_\Sigma f(\tilde{c}) \left( \diverge_\ast(u\cdot \nu \nab_\ast \eta )  - u\cdot \nu H  \sqrt{1+\abs{\nab_\ast \eta}^2} \right) := II_1 + II_2.
\end{equation*}
Upon integrating by parts, we may rewrite 
\begin{equation*}
 II_1 = \int_\Sigma -u\cdot \nu \nab_\ast f(\tilde{c}) \cdot \nab_\ast \eta=   \int_\Sigma u\cdot \nu \nab_\ast f(\tilde{c}) \cdot \nu_\ast \sqrt{1+ \abs{\nab_\ast \eta}^2}.
\end{equation*}
Thus 
\begin{equation*}
 II =  \int_\Sigma \left( u\cdot \nu \nab_\ast f(\tilde{c}) \cdot \nu_\ast  - f(\tilde{c}) u\cdot \nu H \right) \sqrt{1+ \abs{\nab_\ast \eta}^2}.
\end{equation*}

To conclude we first note that 
\begin{equation*}
 u \cdot \nab_\Gamma f(\tilde{c}) =   u \cdot \left[  \nab_\ast f(\tilde{c})  -  \nu (\nu_\ast \cdot \nab_\ast f(\tilde{c}))  \right].
\end{equation*}
Thus upon summing the above expressions for $I_1$ and $II$ we find that $I_1 + II =0$, which then yields the desired identity.

\end{proof}

\section{Analytic tools }\label{app_tools}

\subsection{Poisson integral}

Suppose that $\Sigma = (L_1 \mathbb{T}) \times (L_2 \mathbb{T})$.  We define the Poisson integral in $\Omega_- = \Sigma \times (-\infty,0)$ by
\begin{equation}\label{poisson_def_per}
\mathcal{P} f(x) = \sum_{n \in   (L_1^{-1} \mathbb{Z}) \times (L_2^{-1} \mathbb{Z}) }  e^{2\pi i n \cdot x_\ast} e^{2\pi \abs{n}x_3} \hat{f}(n),
\end{equation}
where for $n \in   (L_1^{-1} \mathbb{Z}) \times (L_2^{-1} \mathbb{Z})$ we have written
\begin{equation*}
 \hat{f}(n) = \int_\Sigma f(x_\ast)  \frac{e^{-2\pi i n \cdot x_\ast}}{L_1 L_2} dx_\ast.
\end{equation*}
It is well known that $\mathcal{P}: H^{s}(\Sigma) \rightarrow H^{s+1/2}(\Omega_-)$ is a bounded linear operator for $s>0$.

\begin{lem}\label{p_poisson}
Let $\mathcal{P} f$ be the Poisson integral of a function $f$ that is either in $\dot{H}^{q}(\Sigma)$ or $\dot{H}^{q-1/2}(\Sigma)$ for $q \in \mathbb{N}$.  Then
\begin{equation*} 
 \ns{\nab^q \mathcal{P}f }_{0} \ls \norm{f}_{\dot{H}^{q-1/2}(\Sigma)}^2 \text{ and }  \ns{\nab^q \mathcal{P}f }_{0} \ls \norm{f}_{\dot{H}^{q}(\Sigma)}^2.
\end{equation*}
\end{lem}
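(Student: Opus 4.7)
The plan is to prove both estimates simultaneously by a direct Fourier-side computation, exploiting that derivatives of $\mathcal{P}f$ are Fourier multipliers with symbols controlled by $|n|^q$ and that the vertical exponential weight integrates explicitly.

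First, I would expand the gradient via \eqref{poisson_def_per}: for any multi-index $\alpha\in\mathbb{N}^3$ with $|\alpha|=q$, applying $\partial^\alpha$ to $\mathcal{P}f$ produces a Fourier series whose $n$-th coefficient is a product of factors $2\pi i n_1$, $2\pi i n_2$, or $2\pi|n|$ (one for each derivative, the last coming from $\partial_3$ acting on $e^{2\pi|n|x_3}$). In particular, the pointwise bound $|\widehat{\partial^\alpha \mathcal{P}f}(n,x_3)| \le (2\pi|n|)^q e^{2\pi|n|x_3}|\hat f(n)|$ holds, and the zero mode drops out since $q\ge 1$. Applying Plancherel on $\Sigma$ for each fixed $x_3\in(-b,0)$ yields
\begin{equation*}
\int_\Sigma |\partial^\alpha \mathcal{P}f(x_\ast,x_3)|^2\,dx_\ast \;\lesssim\; \sum_{n\neq 0}|n|^{2q}\,e^{4\pi|n|x_3}\,|\hat f(n)|^2.
\end{equation*}

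Next I would integrate over $x_3\in(-b,0)$, using the two elementary bounds
\begin{equation*}
\int_{-b}^{0} e^{4\pi|n|x_3}\,dx_3 \;=\; \frac{1-e^{-4\pi|n|b}}{4\pi|n|} \;\le\; \min\!\Big(\frac{1}{4\pi|n|},\,b\Big).
\end{equation*}
The first choice gives
\begin{equation*}
\|\partial^\alpha \mathcal{P}f\|_0^2 \;\lesssim\; \sum_{n\neq 0} |n|^{2q-1}|\hat f(n)|^2 \;\sim\; \|f\|_{\dot H^{q-1/2}(\Sigma)}^2,
\end{equation*}
while the second gives
\begin{equation*}
\|\partial^\alpha \mathcal{P}f\|_0^2 \;\lesssim\; b\sum_{n\neq 0}|n|^{2q}|\hat f(n)|^2 \;\sim\; \|f\|_{\dot H^q(\Sigma)}^2.
\end{equation*}
Summing over the finitely many multi-indices $\alpha$ with $|\alpha|=q$ yields both claimed estimates.

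There is no real obstacle here: the computation is essentially algebraic once one writes $\mathcal{P}f$ in Fourier series. The only point requiring any care is the normalization of Plancherel on $\Sigma=(L_1\mathbb{T})\times(L_2\mathbb{T})$ with the convention for $\hat f$ chosen in \eqref{poisson_def_per}, which only affects the implicit constants, and the observation that $n=0$ plays no role (either because $q\ge 1$ makes the symbol vanish, or because the homogeneous seminorms $\|\cdot\|_{\dot H^s}$ ignore the mean). The argument is independent of $q\in\mathbb{N}$, so it handles both statements in one stroke.
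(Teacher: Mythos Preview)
Your argument is correct. The paper does not actually supply a proof of this lemma: it simply refers the reader to Lemma~A.3 of \cite{GT2}. Your direct Fourier computation---bounding the symbol of $\partial^\alpha$ acting on $\mathcal{P}f$ by $(2\pi|n|)^q$, applying Plancherel slice-by-slice, and then using the two elementary bounds on $\int_{-b}^0 e^{4\pi|n|x_3}\,dx_3$---is exactly the standard argument that underlies the cited result, so there is nothing to compare.
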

\begin{proof}
This is proved, for instance, in Lemma A.3 of \cite{GT2}.
\end{proof}

We will also need $L^\infty$ estimates.
\begin{lem}\label{p_poisson_2}
Let $\mathcal{P} f$ be the Poisson integral of a function $f$ that is in $\dot{H}^{q+s}(\Sigma)$ for $q\ge 1$ an integer and $s> 1$.  Then
\begin{equation*} 
 \pns{\nab^q \mathcal{P}f }{\infty} \ls \ns{f}_{\dot{H}^{q+s}}.
\end{equation*}
The same estimate holds for $q=0$ if $f$ satisfies $\int_{\Sigma} f =0$.
\end{lem}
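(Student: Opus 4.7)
My plan is to prove Lemma \ref{p_poisson_2} directly from the Fourier series representation \eqref{poisson_def_per}, using Cauchy--Schwarz to peel off enough regularity so that the resulting sum over frequencies is absolutely convergent. Throughout, I will write $N := (L_1^{-1}\mathbb{Z}) \times (L_2^{-1}\mathbb{Z})$ and $N_\ast := N \setminus \{0\}$.

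First I would differentiate \eqref{poisson_def_per} termwise. Each application of $\p_j$ ($j=1,2$) brings down a factor of $2\pi i n_j$, and $\p_3$ brings down $2\pi |n|$, so for any multi-index with $|\alpha| = q$ we get
\begin{equation*}
\p^\alpha \mathcal{P} f(x) = \sum_{n \in N} c_\alpha(n) \, e^{2\pi i n \cdot x_\ast}\, e^{2\pi |n| x_3}\, \hat{f}(n),
\end{equation*}
with $|c_\alpha(n)| \ls |n|^q$. When $q \ge 1$ the $n=0$ term vanishes automatically, and when $q=0$ it vanishes by the assumed zero-average condition; hence in all cases the sum runs over $N_\ast$.

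Next I would take the absolute value inside the sum, use $e^{2\pi |n| x_3} \le 1$ for $x_3 \le 0$, and apply Cauchy--Schwarz with the splitting $|n|^q = |n|^{q+s} \cdot |n|^{-s}$:
\begin{equation*}
|\p^\alpha \mathcal{P} f(x)| \ls \sum_{n \in N_\ast} |n|^q |\hat f(n)| \le \left(\sum_{n \in N_\ast} |n|^{2(q+s)} |\hat f(n)|^2\right)^{1/2} \left(\sum_{n \in N_\ast} |n|^{-2s}\right)^{1/2}.
\end{equation*}
The first factor is precisely (up to a universal constant) $\norm{f}_{\dot{H}^{q+s}(\Sigma)}$ by Parseval, while the second is a fixed tail sum. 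Comparing the lattice sum $\sum_{n \in N_\ast} |n|^{-2s}$ to the integral $\int_{|\xi| \ge 1} |\xi|^{-2s}\, d\xi$ in two dimensions, this is finite precisely when $2s > 2$, i.e.\ $s > 1$, which is exactly the hypothesis. Taking the supremum over $x \in \Omega$ then gives the stated bound.

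There is no real obstacle here; the only subtlety is tracking the dimensional critical exponent for the tail series, which is why $s > 1$ (rather than $s > 1/2$, which would suffice in one horizontal dimension) is required. If a slightly sharper constant were ever needed I would instead estimate using the decay of $e^{2\pi |n| x_3}$ for $x_3 < 0$ to gain integrability, but since the claim is for $x_3 \le 0$ (and the bound must hold up to the boundary $x_3 = 0$), the plain $s > 1$ argument above is both necessary and sufficient.
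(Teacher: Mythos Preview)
Your argument is correct and is the standard one: the paper itself does not give a proof but simply cites Lemma~A.4 of \cite{GT2}, where exactly this Cauchy--Schwarz splitting on the Fourier side is carried out. There is nothing to add; your treatment of the $n=0$ mode in the two cases $q\ge 1$ and $q=0$ is also right.
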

\begin{proof}
This is proved, for instance, in Lemma A.4 of \cite{GT2}.
\end{proof}

\subsection{Product estimates}
  
The following lemma is key for nonlinear estimates.  
  
\begin{lem}\label{product_ests}
The following hold.
\begin{enumerate}
 \item Let $0\le r \le s_1 \le s_2$ with $s_1 > n/2$. If $f\in H^{s_1}$, $g\in H^{s_2}$ then $fg \in H^r$ and
\begin{equation}\label{i_s_p_01}
\|{fg}\|_{r} \  \lesssim  \ \|{f}\|_{{s_1}} \|{g}\|_{{s_2}}.
\end{equation}

\item Let $0\le r \le s_1 \le s_2$ with $s_2 >r+ n/2$. If $f\in H^{s_1}$, $g\in H^{s_2}$ then $fg \in H^r$ and
\begin{equation}\label{i_s_p_02}
 \|{fg}\|_{r} \  \lesssim \  \|{f}\|_{{s_1}}  \|{g}\|_{{s_2}}.
\end{equation}

\item Let $0\le r \le s_1 \le s_2$ with $s_2 >r+ n/2$. If $f \in H^{-r}(\Sigma),$ $g \in H^{s_2}(\Sigma)$ then $fg \in H^{-s_1}(\Sigma)$ and
\begin{equation*}
 \|{fg}\|_{\Sigma,-s_1}  \ \ls  \ \|{f}\|_{\Sigma,-r}  \|{g}\|_{\Sigma,s_2}.
\end{equation*}
\end{enumerate}
\end{lem}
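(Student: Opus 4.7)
My plan is to prove items (1) and (2) by standard Sobolev product estimates, and then deduce item (3) by a duality argument. Since $\Sigma$ is a flat torus of dimension $n=2$, the full Littlewood--Paley and paraproduct toolbox is available.

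The most efficient route for (1) and (2) is Bony's paraproduct decomposition: write $fg = T_f g + T_g f + R(f,g)$, where $T_\cdot$ denotes the paraproduct built from dyadic frequency localizations and $R$ the symmetric remainder. The three pieces obey the standard bounds
\begin{equation*}
\|T_f g\|_{H^\sigma} \ls \|f\|_{L^\infty}\|g\|_{H^\sigma}, \qquad \|T_g f\|_{H^{\sigma-\rho}} \ls \|g\|_{W^{-\rho,\infty}}\|f\|_{H^\sigma},
\end{equation*}
together with $\|R(f,g)\|_{H^{s_1+s_2-n/2}} \ls \|f\|_{H^{s_1}}\|g\|_{H^{s_2}}$ provided $s_1+s_2>0$. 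Under the hypothesis $s_1>n/2$ of (1), Sobolev embedding gives $f,g\in L^\infty$, so each piece is bounded in $H^r$ by $\|f\|_{s_1}\|g\|_{s_2}$ for every $r\le s_2$, in particular for $r\le s_1$. Under the hypothesis $s_2>r+n/2$ of (2), we instead exploit $g\in W^{r,\infty}$ via Sobolev embedding, obtaining $\|T_g f\|_r \ls \|g\|_{W^{r,\infty}}\|f\|_{s_1} \ls \|g\|_{s_2}\|f\|_{s_1}$; the $T_f g$ piece is controlled by pairing $f\in H^{s_1}\subseteq L^2$ with enough derivatives of $g$, while the remainder exponent $s_1+s_2-n/2$ dominates $r$ since $s_2>r+n/2$ and $s_1\ge 0$.

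For (3) I would pass to the dual formulation. By definition of negative-order norms on $\Sigma$,
\begin{equation*}
\|fg\|_{\Sigma,-s_1} = \sup_{\|\phi\|_{\Sigma,s_1}\le 1}\abs{\langle fg,\phi\rangle_\Sigma} = \sup_{\|\phi\|_{\Sigma,s_1}\le 1}\abs{\langle f,g\phi\rangle_\Sigma}.
\end{equation*}
Applying (2) to the product $g\phi$ under the assignments $f\mapsto \phi$, $g\mapsto g$, $s_1\mapsto s_1$, $s_2\mapsto s_2$, and $r\mapsto r$ yields $\|g\phi\|_{\Sigma,r}\ls \|\phi\|_{\Sigma,s_1}\|g\|_{\Sigma,s_2}$ because the hypothesis $s_2>r+n/2$ is exactly what the application of (2) requires. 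Bounding the pairing by $\|f\|_{\Sigma,-r}\|g\phi\|_{\Sigma,r}$ and taking the supremum over $\phi$ gives the desired estimate.

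The main obstacle, if one wishes to avoid invoking paraproducts as a black box, lies in handling non-integer $r$: for integer $r$ one can argue quite concretely via the Leibniz rule, H\"older's inequality, and Gagliardo--Nirenberg interpolation applied to each term $\p^\beta f\cdot \p^{\alpha-\beta}g$, but extending to fractional $r$ requires either complex interpolation within the Sobolev scale $\{H^s\}$ or the Littlewood--Paley framework above, and one has to verify case by case that the interpolation or paraproduct endpoint estimates remain valid across all three hypothesized regimes of $(r,s_1,s_2)$.
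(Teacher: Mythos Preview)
Your argument is sound: the paraproduct decomposition handles items (1) and (2) in the standard way, and the duality reduction of (3) to (2) is exactly right, with the hypotheses matching up cleanly. One small point worth tightening is your treatment of $T_f g$ in case (2): since you do not assume $s_1>n/2$ there, you cannot invoke $f\in L^\infty$ directly, so you should instead use the low-frequency paraproduct estimate $\|T_f g\|_{H^{s_1+s_2-n/2}}\ls \|f\|_{H^{s_1}}\|g\|_{H^{s_2}}$ (valid for $s_1<n/2$) and note that $s_1+s_2-n/2\ge r$ follows from $s_2>r+n/2$ and $s_1\ge 0$. With that adjustment the sketch is complete.

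As for comparison with the paper: the paper does not actually prove this lemma. Its entire proof reads ``See, for instance, the appendix of \cite{GT2},'' deferring to the Guo--Tice reference. So your proposal is not an alternative route but rather the only route on offer; it supplies the content that the paper outsources. The paraproduct-plus-duality strategy you outline is precisely the kind of argument one finds in the cited source, so in spirit you are aligned with what the authors have in mind.
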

\begin{proof}
See, for instance, the appendix of \cite{GT2}. 
\end{proof}

\textbf{Acknowledgments:}  C. Kim thanks the Center for Nonlinear Analysis at Carnegie Mellon University for the kind hospitality during his stay.



\begin{thebibliography}{}

\bibitem{B} J. Beale.  Large-time regularity of viscous surface waves. \emph{Arch. Rational Mech. Anal.} \textbf{84} (1983/84), no. 4, 307--352. 

\bibitem{garoff_2} B.K. Beppler, K.S. Varanasi, S. Garoff, G. Evmenenko, K.N. Woods. Influence of fluid flow on the deposition of soluble surfactants through receding contact lines of volatile solvents. \emph{Langmuir} \textbf{24} (2008),  6705--6711.


\bibitem{bo_pr_1} D. Bothe, J. Pr\"{u}ss. Stability of equilibria for two-phase flows with soluble surfactant. \emph{Quart. J. Mech. Appl. Math.} \textbf{63} (2010), no. 2, 177--199. 

\bibitem{bo_pr_si_1}  D. Bothe, J. Pr\"{u}ss, G. Simonett.  Well-posedness of a two-phase flow with soluble surfactant.  \emph{Nonlinear Elliptic and Parabolic Problems 37–61}. Progr. Nonlinear Differential Equations Appl., 64.  Birkh\"{a}user, Basel, 2005. 

\bibitem{ed_bren_was} D. Edwards, H. Brenner, D. Wasan.  \emph{Interfacial Transport Processes and Rheology}.  Butterworth-Heinemann, Boston,  1991.


\bibitem{GT1}Y. Guo, I. Tice.  Local well-posedness of the viscous surface wave problem without surface tension. \emph{Anal. PDE} \textbf{6} (2013), no. 2, 287--369.

\bibitem{GT2} Almost exponential decay of periodic viscous surface waves without surface tension. \emph{Arch. Rational Mech. Anal.}  \textbf{207} (2013), no. 2, 459--531.


\bibitem{hills} B. Hills. An alternative view of the role(s) of surfactant and the alveolar model. \emph{J. Appl. Physiol.} \textbf{87} (1999), no. 5, 1567--1583.

\bibitem{kwan_park_shen} Y.-Y. Kwan, J. Park, J. Shen.  A mathematical and numerical study of incompressible flows with a surfactant monolayer. \emph{Discrete Contin. Dyn. Syst.} \textbf{28} (2010), no. 1, 181--197. 


\bibitem{levich} V. Levich.  \emph{Physiochemical Hydrodynamics}.  Prentice-Hall, Englewood Cliffs, 1962.

\bibitem{garoff} A.L. Marcinkowski, S. Garoff, R. Tilton, J.M. Pilewski, T.E. Corcoran. Post-deposition dispersion of aerosol medications using surfactant carriers. \emph{J. Aerosol Med. Pulm. Drug Deliv} \textbf{21} (2008),  361--369.

\bibitem{myers} D. Myers. \emph{Surfactants Science and Technology.} John Wiley and Sons, Hoboken, 2006.

\bibitem{nishida_1} T. Nishida, Y. Teramoto, H. Yoshihara. Global in time behavior of viscous surface waves: horizontally periodic motion. \emph{J. Math. Kyoto Univ.} \textbf{44} (2004), no. 2, 271--323. 


\bibitem{rosen} M. Rosen. \emph{Surfactants and Interfacial Phenomena.} John Wiley and Sons, Hoboken, 2004.


\bibitem{Sar} T. Sarpkaya. Vorticity, free surface, and surfactants. \emph{Annual review of fluid mechanics, Vol. 28}, 83--128, \emph{Annual Reviews}, Palo Alto, CA, 1996.

\bibitem{temam} R. Temam. \textit{Navier-Stokes Equations: Theory and Numerical Analysis}.  3rd edn.  North-Holland, Amsterdam, 1984.

\bibitem{WTK}  Y. Wang, I. Tice, C. Kim.  The viscous surface-internal wave problem: global well-posedness and decay.  \emph{Arch. Rational Mech. Anal.} \textbf{212} (2014), no. 1, 1--92. 

\bibitem{Wu} L. Wu.  Well-posedness and decay of the viscous surface wave.  \emph{SIAM J. Math. Anal.} \textbf{46} (2014), no. 3, 2084--2135.

\bibitem{xu_li_low_zhao} J.-J. Xu, Z. Li, J. Lowengrub, H. Zhao. A level-set method for interfacial flows with surfactant. \emph{J. Comput. Phys.} \textbf{212} (2006), no. 2, 590--616. 

\end{thebibliography}
\end{document}